\documentclass{article}%
\usepackage{amsmath}
\usepackage{amsfonts}
\usepackage{amssymb}
\usepackage{graphicx}
\usepackage{subfigure}
\usepackage{subfloat}



\setcounter{MaxMatrixCols}{30}
\providecommand{\U}[1]{\protect\rule{.1in}{.1in}}
\newtheorem{theorem}{Theorem}

\newtheorem{definition}[theorem]{Definition}

\newtheorem{lemma}[theorem]{Lemma}

\newenvironment{proof}[1][Proof]{\noindent\textbf{#1.} }{\ \rule{0.5em}{0.5em}}
\textheight=50pc \textwidth=40pc \topmargin = 0pt \oddsidemargin=8pt

\input{tcilatex}
\begin{document}

\author{J.C. Jimenez\thanks{Instituto de Cibernetica, Matematica y Fisica, Calle 15,
No. 551, entre C y D, Vedado, La Habana 10400, Cuba. e-mail: (see
http://sites.google.com/site/locallinearization/)}}
\title{Approximate linear minimum variance filters for continuous-discrete state
space models: convergence and practical algorithms}
\maketitle

\begin{abstract}
In this paper, approximate Linear Minimum Variance (LMV) filters for
continuous-discrete state space models are introduced. The filters
are obtained by means of a recursive approximation to the
predictions for the first two moments of the state equation. It is
shown that the approximate filters converge to the exact LMV filter
when the error between the predictions and their approximations
decreases. As particular instance, the order-$\beta$ Local
Linearization filters are presented and expounded in detail.
Practical algorithms are also provided and their performance in
simulation is illustrated with various examples. The proposed
filters are intended for the recurrent practical situation where a
nonlinear stochastic system should be identified from a reduced
number of partial and noisy observations distant in time.
\end{abstract}

\section{Introduction}

The estimation of unobserved states of a continuous stochastic
dynamical system from noisy discrete observations is of central
importance to solve diverse scientific and technological problems.
The major contribution to the solution of this estimation problem is
due to Kalman and Bucy \cite{Kalman 1961,Kalman 1963}, who provided
a sequential and computationally efficient solution to the optimal
filtering and prediction problem for linear state space models with
additive noise. However, the optimal estimation of nonlinear state
space models is still a subject of active researches. Typically, the
solution of optimal filtering problems involves the resolution of
evolution equations for conditional probabilistic densities, moments
or modes, which in general have explicit solutions in few particular
cases. Therefore, a variety approximations have been developed.
Examples of such approximate nonlinear filters are the classical
ones as the Extended Kalman, the Iterated Extended Kalman, the
Gaussian and the Modified Gaussian filters \cite{Jazwinski 1970};
and other relatively recents as the Local Linearization \cite{Ozaki
1993,Jimenez 2003}, the Projection \cite{Brigo 1999} and the
Particle filters \cite{Del Moral 2001} methods.

In a variety of practical situations, the solution of the general
optimal filtering problem is dispensable since the solution provided
by a suboptimal filter is satisfactory. This is the case of the
signal filtering and detection problems, the system stabilization,
and the parameter estimation of nonlinear systems, among others.
Prominent examples of suboptimal filters are the linear, the
quadratic and the polynomial one, which have been widely used for
the estimation of the state of both, continuous-continuous
\cite{Mil'shtein 1984,Mohler 1980,Phillis 1989} and
discrete-discrete \cite{De Koning 1984,Pakshin 1978,Phillis 1989,De
Santis 1995,Carravetta 1997} models. In the case of
continuous-discrete models, exact expressions for Linear Minimum
Variance filter (LMV) have also been derived \cite{Jimenez 2002},
but they are restricted to linear models. For nonlinear models, this
kind of suboptimal filter has in general no exact solution since the
first two conditional moments of the state equation has no explicit
solution. Therefore, adequate approximations are required in this
situation as well.

In this paper, approximate LMV filters for nonlinear
continuous-discrete state space models are introduced. The filters
are obtained by means of a recursive approximation to the
predictions for the first two moments of the state equation. It is
shown that the approximate filters converge to the exact LMV filter
when the error between the predictions and their approximations
decreases. Based on the well-known Local Linear approximations for
the state equation, the order-$\beta$ Local Linearization filters
are presented as a particular instance. Their convergence, practical
algorithms and performance in simulations are also considered in
detail. The simulations show that these Local Linearization filters
provide accurate and computationally efficient estimation of the
unobserved states of the stochastic systems given a reduced number
of partial and noisy observations, which is a typical situation in
practical control engineering.

The paper is organized as follows. In section 2, basic notations and
results on LMV filters, Local Linear approximations and Local
Linearization filters are presented. The general class of
approximate LMV filters is introduced in section 3 and its
convergence is stated. In section 4, the order-$\beta$ Local
Linearization filters are presented and their convergence analyzed.
In the last two sections, practical algorithms for these filters and
their performance in simulations are considered.

\section{Notations and Preliminaries}

Let $(\Omega,\mathcal{F},P)$ be a complete probability space, and
$\{\mathcal{F}_{t},$ $t\geq t_{0}\}$ be an increasing right
continuous family of complete sub $\sigma$-algebras of
$\mathcal{F}$. Consider the state space model defined by the
continuous state equation
\begin{equation}
d\mathbf{x}(t)=\mathbf{f}(t,\mathbf{x}(t))dt+\sum\limits_{i=1}^{m}\mathbf{g}%
_{i}(t,\mathbf{x}(t))d\mathbf{w}^{i}(t),\text{ }  \label{SS1}
\end{equation}%
for all $t\in \lbrack t_{0},T]$, and the discrete observation
equation
\begin{equation}
\mathbf{z}_{t_{k}}=\mathbf{Cx}(t_{k})+\mathbf{e}_{t_{k}},\text{ }
\label{SS2}
\end{equation}%
for all $k=0,1,..,M-1$, where $\mathbf{f}$, $\mathbf{g}_{i}:[t_{0},T]\times\mathbb{R}^{d}%
\rightarrow\mathbb{R}^{d}$ are functions, $\mathbf{w=(\mathbf{w}}^{1}%
,\ldots,\mathbf{w}^{m}\mathbf{)}$ is an $m$-dimensional $\mathcal{F}_{t}%
$-adapted standard Wiener process, $\{\mathbf{e}_{t_{k}}:\mathbf{e}_{t_{k}%
}\thicksim\mathcal{N}(0,\Sigma_{t_{k}}),$ $k=0,..,M-1\}$ is a sequence of $r$%
-dimensional i.i.d. Gaussian random vectors independent of
$\mathbf{w}$, $\Sigma_{t_{k}}$ an $r\times r$ positive semi-definite
matrix, and $\mathbf{C}$ an $r\times d$ matrix. Here, it is assumed
that the $M$ time
instants $t_{k}$ define an increasing sequence $\{t\}_{M}=\{t_{k}:t_{k}%
<t_{k+1}$, $t_{M-1}=T$, $k=0,1,..,M-1\}$. Conditions for the
existence and uniqueness of a strong solution of (\ref{SS1}) with
bounded moments are assumed.

Let $\mathbf{x}_{t/\rho}=E(\mathbf{x(}t)/Z_{\rho})$ and
$\mathbf{Q}_{t/\rho
}=E(\mathbf{x(}t)\mathbf{x}^{\intercal}(t)/Z_{\rho})$ be the first
two conditional moments of $\mathbf{x}$ with $\rho\leq t$, where
$E(.)$ denotes the mathematical expectation value, and $Z_{\rho
}=\{\mathbf{z}_{t_{k}}:$ $t_{k}\leq \rho ,$ $t_{k}\in \{t\}_{M}\}$
is a time series with observations from (\ref{SS2}). Further, let us
denote by
\begin{align*}
\mathbf{U}_{t/\rho}  &  =E((\mathbf{x(}t)-\mathbf{x}_{t/\rho})(\mathbf{x(}%
t)-\mathbf{x}_{t/\rho})^{\intercal}/Z_{\rho})\\
&  =\mathbf{Q}_{t/\rho}-\mathbf{x}_{t/\rho}\mathbf{x}_{t/\rho}^{\intercal}%
\end{align*}
the conditional variance of $\mathbf{x}$.

Denote by $\mathcal{C}_{P}^{l}(\mathbb{R}^{d},\mathbb{R})$ the space
of $l$ time continuously differentiable functions
$g:\mathbb{R}^{d}\rightarrow \mathbb{R}$ for which $g$ and all its
partial derivatives up to order $l$ have polynomial growth.

\subsection{Linear minimum variance filtering problem}

According to \cite{Battin 1962,Schmidt 1966,Sorenson 1966,Jazwinski
1970} the linear minimum variance filter
$\mathbf{x}_{t_{k+1}/t_{k+1}}$ for a state space model with discrete
observation equation (\ref{SS2}) is defined as
\[
\mathbf{x}_{t_{k+1}/t_{k+1}}=\mathbf{x}_{t_{k+1}/t_{k}}+\mathbf{G}_{t_{k+1}%
}\mathbf{(\mathbf{z}}_{t_{k+1}}-\mathbf{\mathbf{C}x}_{t_{k+1}/t_{k}}%
\mathbf{)},
\]
where the filter gain $\mathbf{G}_{t_{k+1}}$ is to be determined so
as to minimize the error variance
\[
E((\mathbf{x}(t_{k+1})-\mathbf{x}_{t_{k+1}/t_{k+1}})(\mathbf{x}(t_{k+1}%
)-\mathbf{x}_{t_{k+1}/t_{k+1}})^{\intercal}).
\]
This yields to the following definition.

\begin{definition}\label{LMV filter}
The Linear Minimum Variance filter for the state space model (\ref{SS1}%
)-(\ref{SS2}) is defined, between observations, by
\begin{equation}
\frac{d\mathbf{x}_{t/t}}{dt}=E(\mathbf{f}(t,\mathbf{x})/Z_{t})\label{LMVF1}%
\end{equation}%
\begin{align}
\frac{d\mathbf{U}_{t/t}}{dt} &  =E(\mathbf{xf}^{\intercal}(t,\mathbf{x}%
)/Z_{t})-\mathbf{x}_{t/t}E(\mathbf{f}^{\intercal}(t,\mathbf{x})/Z_{t}%
)+E(\mathbf{f}(t,\mathbf{x})\mathbf{x}^{\intercal}/Z_{t})\nonumber\\
&  -E(\mathbf{f}(t,\mathbf{x})/Z_{t})\mathbf{x}_{t/t}^{\intercal}-%
{\displaystyle\sum\limits_{i=1}^{m}}
E(\mathbf{g}_{i}(t,\mathbf{x})\mathbf{g}_{i}^{\intercal}(t,\mathbf{x}%
)/Z_{t})\label{LMVF2}%
\end{align}
for all $t\in(t_{k},t_{k+1})$, and by
\begin{equation}
\mathbf{x}_{t_{k+1}/t_{k+1}}=\mathbf{x}_{t_{k+1}/t_{k}}+\mathbf{G}_{t_{k+1}%
}\mathbf{(\mathbf{z}}_{t_{k+1}}-\mathbf{\mathbf{C}x}_{t_{k+1}/t_{k}}%
\mathbf{)}\label{LMVF3}%
\end{equation}%
\begin{equation}
\mathbf{U}_{t_{k+1}/t_{k+1}}=\mathbf{U}_{t_{k+1}/t_{k}}-\mathbf{G}_{t_{k+1}%
}\mathbf{CU}_{t_{k+1}/t_{k}}\label{LMVF4}%
\end{equation}
for each observation at $t_{k+1}$, with filter gain%
\begin{equation}
\mathbf{G}_{t_{k+1}}=\mathbf{U}_{t_{k+1}/t_{k}}\mathbf{C}^{\intercal
}{\Large (}\mathbf{CU}_{t_{k+1}/t_{k}}\mathbf{C}^{\intercal}+\Sigma_{t_{k+1}%
})^{-1}\label{LMVF5}%
\end{equation}
for all $t_{k},t_{k+1}\in\{t\}_{M}$. The predictions
$\mathbf{x}_{t/t_{k}}$, $\mathbf{U}_{t/t_{k}}$ are accomplished,
respectively, via expressions (\ref{LMVF1})-(\ref{LMVF2}) with
initial conditions $\mathbf{x}_{t_{k}/t_{k}}$ and
$\mathbf{U}_{t_{k}/t_{k}}$ for all $t\in(t_{k},t_{k+1}]$ and
$t_{k},t_{k+1}\in\{t\}_{M}$.
\end{definition}

Note that, in continuous-discrete filtering problem, the filters
$E(\mathbf{x}(t)/Z_{t})$ and $E(\mathbf{x}(t)\mathbf{x}^{\intercal
}(t)/Z_{t}) $ reduce to the predictions $E(\mathbf{x}(t)/Z_{t_{k}})$
and $E(\mathbf{x}(t)\mathbf{x}^{\intercal }(t)/Z_{t_{k}})$ for all
$t$ between two consecutive observations $t_{k}$ and $t_{k+1}$, that
is for all $t\in(t_{k},t_{k+1})$. This is because there is not more
observations between $t_{k}$ and $t_{k+1}$. This implies that, in
the above definition, $\mathbf{x}_{t_{k+1}-\varepsilon
/t_{k+1}-\varepsilon }$\bigskip $\equiv
\mathbf{x}_{t_{k+1}-\varepsilon /t_{k}}$ for all $\varepsilon >0$
and so $\mathbf{x}_{t_{k+1}-\varepsilon /t_{k+1}-\varepsilon }$
tends to $\mathbf{x}_{t_{k+1}/t_{k}}$ when $\varepsilon$ goes to
zero.

Clearly, for linear state equation with additive noise, the LMV
filter (\ref{LMVF1})-(\ref{LMVF5}) reduces to the classical
continuous-discrete Kalman filter. For linear state equation with
multiplicative noise, explicit formulas for the LMV filter can be
found in \cite{Jimenez 2002}. In general, since the
integro-differential equations (\ref{LMVF1})-(\ref{LMVF2}) of the
LMV filter have explicit solution for a few simple state equations,
approximations to them are needed. In principle, for this type of
suboptimal filter, the same conventional approximations to the
general optimal minimum variance filter may be used as well. For
instance, those for the solution of (\ref{LMVF1})-(\ref{LMVF2})
provided by the conventional Extended Kalman, the Iterated Extended
Kalman, the Gaussian, the Modified Gaussian and the Local
Linearization filters. However, in all these approximations, once
the data $Z_{t_{M}}$ are given on a time partition $\{t\}_{M}$ the
error between the exact and the approximate predictions for the mean
and variance of (\ref{SS1}) at $t_{k}$ is completely settled by
$t_{k}-t_{k-1}$ and can not be reduced. Therefore, small enough time
distance between consecutive observations would be typically
necessary to obtain an adequate approximation to the LMV filter.
Undoubtedly, this imposes undesirable restrictions to the time
distance between observations that can not be accomplished in many
practical situations. This drawback can be overcome by means of the
particle filter introduced in \cite{Del Moral 2001}, but at expense
of a very high computation cost. Note that this filter performs, by
means of intensive simulations, an estimation of the whole
probabilistic distribution of the processes $\mathbf{x}$ solution of
(\ref{SS1}) from which the first two conditional moments of
$\mathbf{x}$ can then be computed. Obviously, this general solution
to the filtering problem is not practical when an expedited
computation of the LMV filter (\ref{LMVF1})-(\ref{LMVF5}) is
required, which is typically demanded in many applications. For
example, the LMV filter and its approximations are a key component
of the innovation method for the parameter estimation of diffusion
processes from a time series of partial and noisy observations
\cite{Ozaki
1994,Shoji98,Nielsen01,Nielsen00a,Nielsen00b,Singer02,Jimenez06
JTSA}. For this purpose, accurate and computationally efficient
approximations to the LMV filter will be certainly usefull.

\subsection{Local Linearization filters}

A key component for constructing the Local Linearization (LL)
filters is the concept of Weak Local Linear (WLL) approximation for
Stochastic Differential Equations (SDEs) \cite{Jimenez02 SAA,Jimenez
2003}.

Let us consider the SDE (\ref{SS1}) on the time interval
$[a,b]\subset \lbrack t_{0},T]$, and the time discretization $\left(
\tau \right) _{h}=\left\{ \tau _{n}:n=0,1,\ldots ,N\right\} $ of
$[a,b]$ with maximum stepsize $h$ defined as a sequence of times
that satisfy the
conditions $a=\tau _{0}<\tau _{1}<\cdots <\tau _{N}=b$, and $\underset{n}{%
\max }(\tau _{n+1}-\tau _{n})\leq h<1$ for $n=0,\ldots ,N-1$. Further, let%
\begin{equation*}
n_{t}=\max \{n=0,1,\ldots ,N:\tau _{n}\leq t\text{ and }\tau _{n}\in
\left( \tau \right) _{h}\}
\end{equation*}%
for all $t\in \left[ a,b\right] $.

\begin{definition}
For a given time discretization $\left( \tau \right) _{h}$ of $%
\left[ a,b\right] $, the stochastic process $\mathbf{y}=\{\mathbf{y}(t),$ $%
t\in \left[ a,b\right] \}$ is called order-$\beta $ $(=1,2)$ Weak
Local Linear approximation of the solution of (\ref{SS1}) on $\left[
a,b\right] $\ if it is the weak solution of the piecewise linear
equation
\begin{equation}
d\mathbf{y}(t)=(\mathbf{A}(\tau _{n_{t}})\mathbf{y}(t)+\mathbf{a}^{\mathbb{%
\beta }}(t;\tau
_{n_{t}}))dt+\sum\limits_{i=1}^{m}(\mathbf{B}_{i}(\tau
_{n_{t}})\mathbf{y}(t)+\mathbf{b}_{i}^{\mathbb{\beta }}(t;\tau _{n_{t}}))d%
\mathbf{w}^{i}(t)  \label{WLLA}
\end{equation}%
for all $t\in (\tau _{n},\tau _{n+1}]$ and initial value $\mathbf{y}(a)=%
\mathbf{y}_{0}$, where the matrices functions $\mathbf{A,B}_{i}$ are
defined as
\begin{equation*}
\mathbf{A}(s)=\frac{\partial \mathbf{f}(s,\mathbf{y}(s))}{\partial \mathbf{y}%
}\text{ \ \ \ \ \ \ and \ \ \ \ \ \
}\mathbf{B}_{i}(s)=\frac{\partial
\mathbf{g}_{i}(s,\mathbf{y}(s))}{\partial \mathbf{y}},
\end{equation*}%
and the vectors functions $\mathbf{a}^{\mathbb{\beta }}$, $\mathbf{b}_{i}^{\mathbb{%
\beta }}$ as
\begin{equation*}
\mathbf{a}^{\beta }(t;s)=\left\{
\begin{array}{ll}
\mathbf{f}(s,\mathbf{y}(s))-\frac{\partial \mathbf{f}(s,\mathbf{y}(s))}{%
\partial \mathbf{y}}\mathbf{y}(s)+\frac{\partial \mathbf{f}(s,\mathbf{y}(s))%
}{\partial s}(t-s) & \text{for }\mathbb{\beta }=1 \\
\mathbf{a}^{1}(t;s)+\frac{1}{2}\sum\limits_{j,l=1}^{d}[\mathbf{G}(s,\mathbf{y%
}(s))\mathbf{G}^{\intercal }(s,\mathbf{y}(s))]^{j,l}\text{
}\frac{\partial
^{2}\mathbf{f}(s,\mathbf{y}(s))}{\partial \mathbf{y}^{j}\partial \mathbf{y}%
^{l}}(t-s) & \text{for }\mathbb{\beta }=2%
\end{array}%
\right.
\end{equation*}%
and%
\begin{equation*}
\mathbf{b}_{i}^{\beta }(t;s)=\left\{
\begin{array}{ll}
\mathbf{g}_{i}(s,\mathbf{y}(s))-\frac{\partial \mathbf{g}_{i}(s,\mathbf{y}%
(s))}{\partial \mathbf{y}}\mathbf{y}(s)+\frac{\partial \mathbf{g}_{i}(s,%
\mathbf{y}(s))}{\partial s}(t-s) & \text{for }\mathbb{\beta }=1 \\
\mathbf{b}_{i}^{1}(t;s)+\frac{1}{2}\sum\limits_{j,l=1}^{d}[\mathbf{G}(s,%
\mathbf{y}(s))\mathbf{G}^{\intercal }(s\mathbf{,y}(s))]^{j,l}\text{ }\frac{%
\partial ^{2}\mathbf{g}_{i}(s,\mathbf{y}(s))}{\partial \mathbf{y}%
^{j}\partial \mathbf{y}^{l}}(t-s) & \text{for }\mathbb{\beta }=2%
\end{array}%
\right.
\end{equation*}%
for all $s\leq t$. Here, $\mathbf{G=[g}_{1},\ldots ,\mathbf{g}_{m}]$ is an $%
d\times m$ matrix function.
\end{definition}

The drift and diffusion coefficients of the equation (\ref{WLLA})
are, respectively, weak approximations of order $\beta $ to the
drift and diffusion coefficients of the equation (\ref{SS1})
obtained from the
Ito-Taylor expansion of order $\beta $. That is \cite{Kloeden 1995},%
\begin{equation*}
\underset{s\leq t\leq s+h}{\sup }\left\vert E(g(\mathbf{f}(t,\mathbf{y}%
(t)))-E(g(\mathbf{A}(s)\mathbf{y}(t)+\mathbf{a}^{\mathbb{\beta }%
}(t;s)))\right\vert \leq Ch^{\beta }
\end{equation*}%
and
\begin{equation*}
\underset{s\leq t\leq s+h}{\sup }\left\vert E(g(\mathbf{g}_{i}(t,\mathbf{y}%
(t)))-E(g(\mathbf{B}_{i}(s)\mathbf{y}(t)+\mathbf{b}_{i}^{\mathbb{\beta }%
}(t;s)))\right\vert \leq Ch^{\beta }
\end{equation*}%
for all $h>0$ and $s\in \left[ a,b-h\right] $, where $g\in \mathcal{C}%
_{P}^{2(\beta +1)}(\mathbb{R}^{d},\mathbb{R})$ and $C$ is a positive
constant.

Explicit formulas for the conditional mean $\mathbf{y}_{t/\rho}$ and
variance $\mathbf{V}_{t/\rho}$ of $\mathbf{y}$ were initially given in \cite%
{Jimenez 2002,Jimenez 2003} and simplified later in \cite{Jimenez
2012}.

The conventional Local Linearization filters for the model (\ref{SS1})-(\ref%
{SS2}) are obtained in two steps \cite{Jimenez 2003}: 1) by
approximating the solution of the nonlinear state equation on each
time subinterval $[t_{k},t_{k+1}]$ by the Local Linear approximation
(\ref{WLLA}) on $[t_{k},t_{k+1}]$ with time discretization $\left(
\tau \right) _{h}\equiv \{t_{k},t_{k+1}\}$ for all $t_{k},t_{k+1}\in
\{t\}_{M}$; and 2) by the recursive application of the linear
minimum variance filter \cite{Jimenez 2002} to the resulting
piecewise linear continuous-discrete model. This yields to the
following.

\begin{definition}
Given a time discretization $\left(  \tau\right)
_{h}\equiv\{t\}_{M}$, the Local Linearization filter for the state
space model (\ref{SS1})-(\ref{SS2}) is defined, between
observations, by the linear equations
\begin{equation}
\frac{d\mathbf{y}_{t/t}}{dt}=\mathbf{A}(t_{n_{t}})\mathbf{y}_{t/t}%
+\mathbf{a}^{\beta}(t;t_{n_{t}}) \label{LLF1}%
\end{equation}%
\begin{equation}
\frac{d\mathbf{V}_{t/t}}{dt}=\mathbf{A}(t_{n_{t}})\mathbf{V}_{t/t}%
+\mathbf{V}_{t/t}\mathbf{A}^{\intercal}(t_{n_{t}})+\sum\limits_{i=1}%
^{m}\mathbf{B}_{i}(t_{n_{t}})\mathbf{V}_{t/t}\mathbf{B}_{i}^{\intercal
}(t_{n_{t}})+\mathcal{B}(t;t_{n_{t}}) \label{LLF2}%
\end{equation}
for all $t\in(t_{k},t_{k+1})$, and by
\begin{equation}
\mathbf{y}_{t_{k+1}/t_{k+1}}=\mathbf{y}_{t_{k+1}/t_{k}}+\mathbf{K}_{t_{k+1}%
}\mathbf{(\mathbf{z}}_{t_{k+1}}-\mathbf{\mathbf{C}y}_{t_{k+1}/t_{k}}\mathbf{)}
\label{LLF3}%
\end{equation}%
\begin{equation}
\mathbf{V}_{t_{k+1}/t_{k+1}}=\mathbf{V}_{t_{k+1}/t_{k}}-\mathbf{K}_{t_{k+1}%
}\mathbf{CV}_{t_{k+1}/t_{k}} \label{LLF4}%
\end{equation}
for each observation at $t_{k+1}$, with filter gain%
\begin{equation}
\mathbf{K}_{t_{k+1}}=\mathbf{V}_{t_{k+1}/t_{k}}\mathbf{C}^{\intercal
}{\Large (}\mathbf{CV}_{t_{k+1}/t_{k}}\mathbf{C}^{\intercal}+\Sigma_{t_{k+1}%
})^{-1} \label{LLF5}%
\end{equation}
for all $t_{k},t_{k+1}\in\{t\}_{M}$. The predictions
$\mathbf{y}_{t/t_{k}}$ and $\mathbf{V}_{t/t_{k}}$ are accomplished,
respectively, via expressions (\ref{LLF1})-(\ref{LLF2}) with initial
conditions $\mathbf{y}_{t_{k}/t_{k}}$ and $\mathbf{V}_{t_{k}/t_{k}}$
for $t\in(t_{k},t_{k+1}]$. Here,
\[
\mathcal{B}(t;s)=\sum\limits_{i=1}^{m}\mathbf{B}_{i}(s)\mathbf{y}%
_{t/t}\mathbf{y}_{t/t}^{\intercal}\mathbf{B}_{i}^{\intercal}(s)+\mathbf{B}%
_{i}(s)\mathbf{y}_{t/t}(\mathbf{b}_{i}^{\beta}(t;s))^{\intercal}%
+\mathbf{b}_{i}^{\beta}(t;s)\mathbf{y}_{t/t}^{\intercal}\mathbf{B}%
_{i}^{\intercal}(t)+\mathbf{b}_{i}(t;s)(\mathbf{b}_{i}^{\beta}%
(t;s))^{\intercal},
\]
and the matrices $\mathbf{A},\mathbf{B}_{i}$ and the vectors $\mathbf{a,b}%
_{i}^{\beta}$ are defined as in the WLL approximation (\ref{WLLA})
but, replacing $\mathbf{y}(s)$ by $\mathbf{y}_{s/s}$.
\end{definition}

Both, the Local Linear approximations and the Local Linearization
filters have had a number of important applications. The first ones,
in addition to the filtering problems, have been used for the
derivation of effective integration \cite{Jimenez02
SAA,Carbonell06,Carbonell08,Jimenez 2012 BIT} and inference
\cite{Shoji 1997,Shoji 1998b,Durham02,Singer02,Hurn07} methods for
SDEs, in the estimation of distribution functions in Monte Carlo
Markov Chain methods \cite{Stramer99b,Roberts01,Hansen03} and the
simulation of likelihood functions \cite{Nicolau02}. The second ones
have played a crucial role in the practical implementation of
innovation estimators for the identification of continuous-discrete
state space models \cite{Ozaki 1994,Shoji98,Ozaki JTSA,Jimenez06
JTSA}. In a variety of applications, these approximate innovation
methods have shown high effectiveness and efficiency for the
estimation of unobserved components and unknown parameters of SDEs
given a set of discrete observations. Remarkable is the
identification, from actual data, of neurophysiological, financial
and molecular models, among others (see, e.g.,
\cite{Calderon09,Kamerlin11,Chiarella09,Date11,Jimenez06
APFM,Riera04,Riera07}).

\section{Approximate Linear Minimum Variance filters}

Let $\left(  \tau\right)  _{h}$ be a time discretization of
$[t_{0},T]$ such that $\left(  \tau\right)  _{h}\supset\{t\}_{M}$,
and $\mathbf{y}_{n}$ the approximate value of $\mathbf{x}(\tau_{n})$
obtained from a discretization of the equation (\ref{SS1}) for all
$\tau_{n}\in\left(  \tau\right)  _{h}$. Let us consider the
continuous time approximation $\mathbf{y}=\{\mathbf{y}(t),$
$t\in\lbrack t_{0},T]:\mathbf{y}(\tau_{n})=\mathbf{y}_{n}$ for all
$\tau
_{n}\in\left(  \tau\right)  _{h}\}$ of $\mathbf{x}$ with initial conditions%
\[
E\left(  \mathbf{y}(t_{0})\text{{\LARGE
$\vert$%
}}\mathcal{F}_{t_{0}}\right)  =E\left(
\mathbf{x}(t_{0})\text{{\LARGE
$\vert$%
}}\mathcal{F}_{t_{0}}\right)  \text{ \ \ and \ }E\left(  \mathbf{y}%
(t_{0})\mathbf{y}^{\intercal}(t_{0})\text{{\LARGE
$\vert$%
}}\mathcal{F}_{t_{0}}\right)  =E\left(  \mathbf{x}(t_{0})\mathbf{x}%
^{\intercal}(t_{0})\text{{\LARGE
$\vert$%
}}\mathcal{F}_{t_{0}}\right)  ;\text{ }%
\]
satisfying the bound condition
\begin{equation}
E\left(  \left\vert \mathbf{y}(t)\right\vert ^{2q}\text{{\LARGE
$\vert$%
}}\mathcal{F}_{t_{k}}\right)  \leq L \label{LMVF6}%
\end{equation}
for all $t\in\lbrack t_{k},t_{k+1}]$; and the weak convergence
criteria
\begin{equation}
\underset{t_{k}\leq t\leq t_{k+1}}{\sup}\left\vert E\left(  g(\mathbf{x}%
(t))\text{{\LARGE
$\vert$%
}}\mathcal{F}_{t_{k}}\right)  -E\left( g(\mathbf{y}(t))\text{{\LARGE
$\vert$%
}}\mathcal{F}_{t_{k}}\right)  \right\vert \leq L_{k}h^{\beta} \label{LMVF7}%
\end{equation}
for all $t_{k},t_{k+1}\in \{t\}_{M}$, where $g\in\mathcal{C}_{P}^{2(\beta+1)}(\mathbb{R}%
^{d},\mathbb{R})$, $L$ and $L_{k}$ are positive constants, $%
\beta \in
\mathbb{N}
_{+}$, and $q=1,2...$.  The process $\mathbf{y}$ defined in this way
is typically called order-$\beta$ approximation to $\mathbf{x}$ in
weak sense \cite{Kloeden 1995}.

When an order-$\beta$ approximation to the solution of the state
equation (\ref{SS1}) is chosen, the following approximate filter can
be naturally defined.

\begin{definition}\label{Approx LMV filter}
Given a time discretization $\left(  \tau\right)
_{h}\supset\{t\}_{M}$, the order-$\beta$ Linear Minimum Variance
filter for the state space model (\ref{SS1})-(\ref{SS2}) is defined,
between observations, by
\begin{equation}
\mathbf{y}_{t/t}=E(\mathbf{y(}t)/Z_{t})\text{ \ \ \ \ \ \ and \ \ \ }%
\mathbf{V}_{t/t}=E(\mathbf{y(}t)\mathbf{y}^{\intercal}(t)/Z_{t})-\mathbf{y}%
_{t/t}\mathbf{y}_{t/t}^{\intercal} \label{LMVF12 App}%
\end{equation}
$\ $for all $t\in(t_{k},t_{k+1})$, and by
\begin{equation}
\mathbf{y}_{t_{k+1}/t_{k+1}}=\mathbf{y}_{t_{k+1}/t_{k}}+\mathbf{K}_{t_{k+1}%
}\mathbf{(\mathbf{z}}_{t_{k+1}}-\mathbf{\mathbf{C}y}_{t_{k+1}/t_{k}}%
\mathbf{)}, \label{LMVF3 App}%
\end{equation}%
\begin{equation}
\mathbf{V}_{t_{k+1}/t_{k+1}}=\mathbf{V}_{t_{k+1}/t_{k}}-\mathbf{K}_{t_{k+1}%
}\mathbf{CV}_{t_{k+1}/t_{k}}, \label{LMVF4 App}%
\end{equation}
for each observation at $t_{k+1}$, with filter gain%
\begin{equation}
\mathbf{K}_{t_{k+1}}=\mathbf{V}_{t_{k+1}/t_{k}}\mathbf{C}^{\intercal
}{\Large (}\mathbf{CV}_{t_{k+1}/t_{k}}\mathbf{C}^{\intercal}+\Sigma_{t_{k+1}%
})^{-1} \label{LMVF5 App}%
\end{equation}
for all $t_{k},t_{k+1}\in\{t\}_{M}$, where $\mathbf{y}$ is an
order-$\beta $ approximation to the solution of (\ref{SS1}) in weak
sense.
The predictions $\mathbf{y}_{t/t_{k}%
}=E(\mathbf{y(}t)/Z_{t_{k}})$ and $\mathbf{V}_{t/t_{k}}=E(\mathbf{y(}%
t)\mathbf{y}^{\intercal}(t)/Z_{t_{k}})-\mathbf{y}_{t/t_{k}}\mathbf{y}%
_{t/t_{k}}^{\intercal}$, with initial conditions
$\mathbf{y}_{t_{k}/t_{k}}$ and $\mathbf{V}_{t_{k}/t_{k}}$, are
defined for all $t\in(t_{k},t_{k+1}]$ and
$t_{k},t_{k+1}\in\{t\}_{M}$.
\end{definition}

Note that the goodness of the approximation $\mathbf{y}$ to
$\mathbf{x}$ is measured (in weak sense) by the left hand side of
(\ref{LMVF7}). Thus, the
inequality (\ref{LMVF7}) gives a bound for the errors of the approximation $\mathbf{y%
}$ to $\mathbf{x}$, for all $t\in \lbrack t_{k},t_{k+1}]$ and all
pair of consecutive observations $t_{k},t_{k+1}\in \{t\}_{M}$.
Moreover, this inequality states the convergence (in weak sense and
with rate $\beta $) of the approximation $\mathbf{y}$ to
$\mathbf{x}$ as the maximum stepsize $h$ of the time discretization
$(\tau )_{h}\supset \{t\}_{M}$ goes to zero. Clearly this includes,
as particular case, the convergence of the first two conditional
moments of $\mathbf{y}$ to those of $\mathbf{x}$. Since the
approximate filter in Definition \ref{Approx LMV filter} is designed
in terms of the first two conditional moments of the approximation
$\mathbf{y}$, the weak convergence of $\mathbf{y}$ to $\mathbf{x}$
should imply the convergence of the approximate filter to the exact
one. Next result deals with this matter.

\begin{theorem}
\label{Theorem CLMVF}Let $\mathbf{x}_{t/\rho }$ and
$\mathbf{U}_{t/\rho }$ be the conditional mean and variance
corresponding to the LMV filter (\ref{LMVF1})-(\ref{LMVF5}) for the
model (\ref{SS1})-(\ref{SS2}), and $\mathbf{y}_{t/\rho }$ and
$\mathbf{V}_{t/\rho }$ their respective approximations given by the
order-$\beta $ LMV filter (\ref{LMVF12 App})-(\ref{LMVF5 App}).
Then, between observations, the filters satisfy
\begin{equation}
\left\vert \mathbf{x}_{t/t}-\mathbf{y}_{t/t}\right\vert \leq K_{1}h^{\beta }%
\text{ \ \ \ \ \ and \ \ }\left\vert \mathbf{U}_{t/t}-\mathbf{V}%
_{t/t}\right\vert \leq K_{1}h^{\beta }  \label{LMVF8}
\end{equation}%
for all $t\in (t_{k},t_{k+1})$ and, at each observation $t_{k+1}$,%
\begin{equation}
\left\vert \mathbf{x}_{t_{k+1}/t_{k+1}}-\mathbf{y}_{t_{k+1}/t_{k+1}}\right%
\vert \leq K_{1}h^{\beta }\text{ \ \ \ \ and \ \ }\left\vert \mathbf{U}%
_{t_{k+1}/t_{k+1}}-\mathbf{V}_{t_{k+1}/t_{k+1}}\right\vert \leq
K_{1}h^{\beta }  \label{LMVF9}
\end{equation}%
for all $t_{k},t_{k+1}\in \{t\}_{M}$, where $K_{1}$ is a positive
constant.
For the predictions,%
\begin{equation}
\left\vert \mathbf{x}_{t/t_{k}}-\mathbf{y}_{t/t_{k}}\right\vert \leq
K_{2}h^{\beta }\text{ \ \ \ \ \ \ and \ \ \ }\left\vert \mathbf{U}_{t/t_{k}}-%
\mathbf{V}_{t/t_{k}}\right\vert \leq K_{2}h^{\beta }  \label{LMVF10}
\end{equation}%
hold for all $t\in (t_{k},t_{k+1}]$ and $t_{k},t_{k+1}\in \{t\}_{M}$, where $%
K_{2}$ is a positive constant.
\end{theorem}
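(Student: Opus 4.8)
The plan is to argue by induction on the observation index $k$, using that between two consecutive observations the exact filter coincides with its own prediction, $\mathbf{x}_{t/t}=\mathbf{x}_{t/t_{k}}$ for $t\in(t_{k},t_{k+1})$ (and likewise $\mathbf{y}_{t/t}=\mathbf{y}_{t/t_{k}}$), as recorded in the remark following Definition \ref{LMV filter}. Consequently the estimates (\ref{LMVF8}) and (\ref{LMVF10}) reduce to a single statement about the predictions, and the genuine work splits into two pieces: a \emph{prediction estimate} controlling $|\mathbf{x}_{t/t_{k}}-\mathbf{y}_{t/t_{k}}|$ and $|\mathbf{U}_{t/t_{k}}-\mathbf{V}_{t/t_{k}}|$ on $(t_{k},t_{k+1}]$, and an \emph{update estimate} controlling (\ref{LMVF9}) at $t_{k+1}$. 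The base case at $t_{0}$ is immediate, since $\mathbf{y}$ is initialized so that its first two conditional moments match those of $\mathbf{x}$ at $t_{0}$. Because there are only finitely many observation times $t_{0},\dots,t_{M-1}$, the constants may grow from step to step and still stay finite, so I only need each inductive step to preserve an $O(h^{\beta})$ bound.

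For the prediction step I would exploit that both predictions are conditional moments of the respective processes given $Z_{t_{k}}$. Taking $g$ to be the coordinate projections $g(\mathbf{x})=x_{i}$ and then the coordinate products $g(\mathbf{x})=x_{i}x_{j}$ -- all of which lie in $\mathcal{C}_{P}^{2(\beta+1)}(\mathbb{R}^{d},\mathbb{R})$ -- the weak-convergence criterion (\ref{LMVF7}) bounds the differences of the first moments and of the raw second moments $E(\mathbf{x}(t)\mathbf{x}^{\intercal}(t)/\mathcal{F}_{t_{k}})-E(\mathbf{y}(t)\mathbf{y}^{\intercal}(t)/\mathcal{F}_{t_{k}})$ by $O(h^{\beta})$. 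Since $Z_{t_{k}}\subseteq\mathcal{F}_{t_{k}}$, the tower property together with conditional Jensen transfers these bounds from conditioning on $\mathcal{F}_{t_{k}}$ to conditioning on $Z_{t_{k}}$. For the variance I would write $\mathbf{U}_{t/t_{k}}-\mathbf{V}_{t/t_{k}}$ as the raw-second-moment difference minus $\mathbf{x}_{t/t_{k}}\mathbf{x}_{t/t_{k}}^{\intercal}-\mathbf{y}_{t/t_{k}}\mathbf{y}_{t/t_{k}}^{\intercal}$, factor the latter as $(\mathbf{x}_{t/t_{k}}-\mathbf{y}_{t/t_{k}})\mathbf{x}_{t/t_{k}}^{\intercal}+\mathbf{y}_{t/t_{k}}(\mathbf{x}_{t/t_{k}}-\mathbf{y}_{t/t_{k}})^{\intercal}$, and control it by the mean bound just obtained together with the moment bound (\ref{LMVF6}) (and the bounded-moment hypothesis on $\mathbf{x}$). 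The mismatch between the filtered initial data $\mathbf{x}_{t_{k}/t_{k}}$ and $\mathbf{y}_{t_{k}/t_{k}}$, which differ by $O(h^{\beta})$ by the induction hypothesis, I would absorb through a continuous-dependence (Gronwall) estimate on the moment equations (\ref{LMVF1})--(\ref{LMVF2}), thereby separating the "different-initial-data, same-dynamics" error from the "same-initial-data, different-dynamics" weak error handled by (\ref{LMVF7}).

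For the update step I would subtract (\ref{LMVF3 App}) from (\ref{LMVF3}) and (\ref{LMVF4 App}) from (\ref{LMVF4}). For the mean this yields $\mathbf{x}_{t_{k+1}/t_{k+1}}-\mathbf{y}_{t_{k+1}/t_{k+1}}=(\mathbf{I}-\mathbf{K}_{t_{k+1}}\mathbf{C})(\mathbf{x}_{t_{k+1}/t_{k}}-\mathbf{y}_{t_{k+1}/t_{k}})+(\mathbf{G}_{t_{k+1}}-\mathbf{K}_{t_{k+1}})(\mathbf{z}_{t_{k+1}}-\mathbf{C}\mathbf{x}_{t_{k+1}/t_{k}})$, and similarly for the variance. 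The first summand is $O(h^{\beta})$ by the prediction estimate, and the innovation factor has bounded moments, so it remains to bound the gain difference. Viewing (\ref{LMVF5}) and (\ref{LMVF5 App}) as one smooth matrix-valued map applied to $\mathbf{U}_{t_{k+1}/t_{k}}$ and $\mathbf{V}_{t_{k+1}/t_{k}}$ respectively, I would invoke its Lipschitz continuity -- valid as long as $\mathbf{C}\mathbf{U}_{t_{k+1}/t_{k}}\mathbf{C}^{\intercal}+\Sigma_{t_{k+1}}$ stays uniformly invertible and the predicted variances stay bounded -- to conclude $|\mathbf{G}_{t_{k+1}}-\mathbf{K}_{t_{k+1}}|\leq C|\mathbf{U}_{t_{k+1}/t_{k}}-\mathbf{V}_{t_{k+1}/t_{k}}|=O(h^{\beta})$. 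Substituting into (\ref{LMVF4}) minus (\ref{LMVF4 App}) and using the same factoring trick for the quadratic pieces then gives (\ref{LMVF9}), closing the induction.

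I expect the prediction step to be the main obstacle. The delicate point is that the weak-convergence hypothesis (\ref{LMVF7}) is formulated for the processes $\mathbf{x},\mathbf{y}$ conditioned on $\mathcal{F}_{t_{k}}$, whereas the filter predictions are generated by evolving the moment equations from the \emph{filtered} estimates $\mathbf{x}_{t_{k}/t_{k}},\mathbf{y}_{t_{k}/t_{k}}$; cleanly separating the weak-approximation error from the propagated initial-condition error, and checking that the Gronwall constants together with the gain's Lipschitz constant remain finite over the $M$ steps, is where the care is required. The update step, by contrast, is essentially algebraic once the uniform invertibility of $\mathbf{C}\mathbf{U}\mathbf{C}^{\intercal}+\Sigma$ is secured.
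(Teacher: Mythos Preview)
Your outline is sound and hits the same two ingredients the paper uses: apply the weak-convergence hypothesis (\ref{LMVF7}) componentwise with $g(\mathbf{x})=x_i$ and $g(\mathbf{x})=x_ix_j$ to control the prediction mean and raw second moment, then pass to the variance via the splitting $\mathbf{U}-\mathbf{V}=(\mathbf{Q}-\mathbf{P})-(\mathbf{x}\mathbf{x}^{\intercal}-\mathbf{y}\mathbf{y}^{\intercal})$ together with the moment bounds on $\mathbf{x}$ and (\ref{LMVF6}) on $\mathbf{y}$; and for the update, subtract the filter equations and bound the gain difference in terms of $|\mathbf{U}_{t_{k+1}/t_k}-\mathbf{V}_{t_{k+1}/t_k}|$.

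The paper's argument is, however, structurally simpler than yours in one respect: it does \emph{not} proceed by induction on $k$, and there is no Gronwall step for propagating an initial-condition mismatch. The reason is that in the paper's setup the process $\mathbf{y}$ is a single approximation defined on all of $[t_0,T]$, and the hypothesis (\ref{LMVF7}) is posited to hold on every subinterval $[t_k,t_{k+1}]$ with its own constant $L_k$. Thus the prediction bounds on each interval are read off directly from (\ref{LMVF7}) and one simply sets $K_1=\max_k\alpha_k$ with $\alpha_k=(\sqrt{d}+L+\Lambda)\sqrt{d}L_k$; the ``separating the weak-approximation error from the propagated initial-condition error'' that you flag as the main obstacle never arises. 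Your inductive route with a continuous-dependence estimate would also work, but it is extra machinery the paper avoids by building the needed uniformity into the assumption (\ref{LMVF7}) itself.

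For the gain difference, rather than invoking abstract Lipschitz continuity of $V\mapsto V\mathbf{C}^{\intercal}(\mathbf{C}V\mathbf{C}^{\intercal}+\Sigma)^{-1}$, the paper writes out the algebra explicitly: rewrite (\ref{LMVF5}) and (\ref{LMVF5 App}) as linear equations in $\mathbf{G}_{t_{k+1}}$ and $\mathbf{K}_{t_{k+1}}$, subtract, and solve for $\mathbf{G}_{t_{k+1}}-\mathbf{K}_{t_{k+1}}=(\mathbf{I}-\mathbf{G}_{t_{k+1}}\mathbf{C})(\mathbf{U}_{t_{k+1}/t_k}-\mathbf{V}_{t_{k+1}/t_k})\mathbf{C}^{\intercal}(\mathbf{C}\mathbf{V}_{t_{k+1}/t_k}\mathbf{C}^{\intercal}+\Sigma_{t_{k+1}})^{-1}$. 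This is of course exactly your Lipschitz bound made concrete, and yields the same $O(h^{\beta})$ conclusion once the boundedness of $|\mathbf{G}_{t_{k+1}}|$, $|\mathbf{V}_{t_{k+1}/t_k}|$, $|\mathbf{C}|$, $|\Sigma_{t_{k+1}}|$ is invoked.
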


\begin{proof}
Let us start proving inequalities (\ref{LMVF10}) and (\ref{LMVF8}).
For the functions $g(\mathbf{x}(t))=\mathbf{x}^{i}(t)$ and $g(%
\mathbf{x}(t))=\mathbf{x}^{i}(t)\mathbf{x}^{j}(t)$ belonging to the
function space
$\mathcal{C}_{P}^{2(\beta+1)}(\mathbb{R}^{d},\mathbb{R})$, for all
$i,j=1..d$, condition (\ref{LMVF7}) directly implies that
\begin{equation}
\left\vert \mathbf{x}_{t/t_{k}}-\mathbf{y}_{t/t_{k}}\right\vert \leq
\sqrt{d} L_{k}h^{\beta }  \label{LMVF13}
\end{equation}%
and
\begin{equation*}
\left\vert \mathbf{Q}_{t/t_{k}}-\mathbf{P}_{t/t_{k}}\right\vert \leq
d L_{k}h^{\beta }
\end{equation*}%
for all $t\in (t_{k},t_{k+1}]$, where $\mathbf{P}_{t/t_{k}}=E(\mathbf{y(}t)%
\mathbf{y}^{\intercal }(t)/Z_{t_{k}})$. Since the solution of
(\ref{SS1}) has bounded moments, there exists a positive contant
$\Lambda $ such that of $\left\vert \mathbf{x}_{t/t_{k}}\right\vert
\leq \Lambda $ for all $t\in [t_{k},t_{k+1}]$. Condition (\ref{LMVF6}) implies that $\left\vert \mathbf{y%
}_{t/t_{k}}\right\vert \leq L$ for all $t\in [t_{k},t_{k+1}]$. From
the
formula of the variance in terms of the first two moments, it follows that%
\begin{equation*}
\left\vert \mathbf{U}_{t/t_{k}}-\mathbf{V}_{t/t_{k}}\right\vert \leq
\left\vert \mathbf{Q}_{t/t_{k}}-\mathbf{P}_{t/t_{k}}\right\vert
+\left\vert
\mathbf{x}_{t/t_{k}}\mathbf{x}_{t/t_{k}}^{\intercal }-\mathbf{y}_{t/t_{k}}%
\mathbf{y}_{t/t_{k}}^{\intercal }\right\vert .
\end{equation*}%
Since
\begin{align*}
\left\vert \mathbf{x}_{t/t_{k}}\mathbf{x}_{t/t_{k}}^{\intercal }-\mathbf{y}%
_{t/t_{k}}\mathbf{y}_{t/t_{k}}^{\intercal }\right\vert & =\left\vert \mathbf{%
x}_{t/t_{k}}\mathbf{x}_{t/t_{k}}^{\intercal }-\mathbf{x}_{t/t_{k}}\mathbf{y}%
_{t/t_{k}}^{\intercal
}+\mathbf{x}_{t/t_{k}}\mathbf{y}_{t/t_{k}}^{\intercal
}-\mathbf{y}_{t/t_{k}}\mathbf{y}_{t/t_{k}}^{\intercal }\right\vert  \\
& \leq \left\vert \mathbf{x}_{t/t_{k}}(\mathbf{x}_{t/t_{k}}^{\intercal }-%
\mathbf{y}_{t/t_{k}}^{\intercal })\right\vert +\left\vert (\mathbf{x}%
_{t/t_{k}}-\mathbf{y}_{t/t_{k}})\mathbf{y}_{t/t_{k}}^{\intercal
}\right\vert
\\
& \leq (\left\vert \mathbf{x}_{t/t_{k}}\right\vert +\left\vert \mathbf{y}%
_{t/t_{k}}\right\vert )\left\vert \mathbf{x}_{t/t_{k}}-\mathbf{y}%
_{t/t_{k}}\right\vert ,
\end{align*}%
\begin{equation}
\left\vert \mathbf{U}_{t/t_{k}}-\mathbf{V}_{t/t_{k}}\right\vert \leq
\alpha _{k}h^{\beta }  \label{LMVF14}
\end{equation}%
for all $t\in (t_{k},t_{k+1}]$, where $\alpha _{k}=$
$(\sqrt{d}+L+\Lambda )\sqrt{d} L_{k}$.
Hence, inequalities (\ref{LMVF10}) are obtained from (\ref{LMVF13}) and (\ref%
{LMVF14}) with $K_{1}=\underset{k}{\max }\{\alpha _{k}\}$. Inequalities (\ref%
{LMVF8}) can be derived in the same way.

For the remainder inequalities follow this. From (\ref{LMVF3}) and (\ref%
{LMVF3 App}), it is obtained
\begin{align*}
\left\vert \mathbf{x}_{t_{k+1}/t_{k+1}}-\mathbf{y}_{t_{k+1}/t_{k+1}}\right%
\vert & \leq \left\vert \mathbf{x}_{t_{k+1}/t_{k}}-\mathbf{y}%
_{t_{k+1}/t_{k}}\right\vert  \\
& +\left\vert \mathbf{G}_{t_{k+1}}\mathbf{(\mathbf{z}}_{t_{k+1}}-\mathbf{%
\mathbf{C}x}_{t_{k+1}/t_{k}}\mathbf{)-K}_{t_{k+1}}\mathbf{(\mathbf{z}}%
_{t_{k+1}}-\mathbf{\mathbf{C}y}_{t_{k+1}/t_{k}}\mathbf{)}\right\vert  \\
& \leq (1+\left\vert
\mathbf{G}_{t_{k+1}}\mathbf{\mathbf{C}}\right\vert )\left\vert
\mathbf{x}_{t_{k+1}/t_{k}}-\mathbf{y}_{t_{k+1}/t_{k}}\right\vert
\\
& +(\left\vert \mathbf{\mathbf{z}}_{t_{k+1}}\right\vert +\left\vert \mathbf{%
\mathbf{C}y}_{t_{k+1}/t_{k}}\right\vert )\left\vert \mathbf{G}_{t_{k+1}}-%
\mathbf{K}_{t_{k+1}}\right\vert .
\end{align*}%
From (\ref{LMVF4}) and (\ref{LMVF4 App}),
\begin{align*}
\left\vert \mathbf{U}_{t_{k+1}/t_{k+1}}-\mathbf{V}_{t_{k+1}/t_{k+1}}\right%
\vert & \leq \left\vert \mathbf{U}_{t_{k+1}/t_{k}}-\mathbf{V}%
_{t_{k+1}/t_{k}}\right\vert +\left\vert \mathbf{G}_{t_{k+1}}\mathbf{CU}%
_{t_{k+1}/t_{k}}-\mathbf{K}_{t_{k+1}}\mathbf{CV}_{t_{k+1}/t_{k}}\right\vert
\\
& \leq (1+\left\vert \mathbf{G}_{t_{k+1}}\mathbf{C}\right\vert
)\left\vert
\mathbf{U}_{t_{k+1}/t_{k}}-\mathbf{V}_{t_{k+1}/t_{k}}\right\vert
+\left\vert
\mathbf{CV}_{t_{k+1}/t_{k}}\right\vert \left\vert \mathbf{G}_{t_{k+1}}-%
\mathbf{K}_{t_{k+1}}\right\vert .
\end{align*}%
By rewriting (\ref{LMVF5 App}) and (\ref{LMVF5}) as
\begin{equation*}
\mathbf{K}_{t_{k+1}}{\Large (}\mathbf{CV}_{t_{k+1}/t_{k}}\mathbf{C}%
^{\intercal }+\Sigma _{t_{k+1}})=\mathbf{V}_{t_{k+1}/t_{k}}\mathbf{C}%
^{\intercal }
\end{equation*}%
and%
\begin{equation*}
\mathbf{G}_{t_{k+1}}{\Large (}\mathbf{CU}_{t_{k+1}/t_{k}}\mathbf{C}%
^{\intercal }+\Sigma _{t_{k+1}})-\mathbf{G}_{t_{k+1}}{\Large (}\mathbf{CV}%
_{t_{k+1}/t_{k}}\mathbf{C}^{\intercal }+\Sigma _{t_{k+1}})+\mathbf{G}%
_{t_{k+1}}{\Large (}\mathbf{CV}_{t_{k+1}/t_{k}}\mathbf{C}^{\intercal
}+\Sigma _{t_{k+1}})=\mathbf{U}_{t_{k+1}/t_{k}}\mathbf{C}^{\intercal
},
\end{equation*}%
and subtracting the first expression to the second one, it follows that%
\begin{equation*}
(\mathbf{G}_{t_{k+1}}-\mathbf{K}_{t_{k+1}}){\Large (}\mathbf{CV}%
_{t_{k+1}/t_{k}}\mathbf{C}^{\intercal }+\Sigma _{t_{k+1}})=\mathbf{G}%
_{t_{k+1}}\mathbf{C(\mathbf{\mathbf{V}}}_{t_{k+1}/t_{k}}-\mathbf{U}%
_{t_{k+1}/t_{k}}\mathbf{)C}^{\intercal }+\mathbf{(U}_{t_{k+1}/t_{k}}-\mathbf{%
\mathbf{V}}_{t_{k+1}/t_{k}}\mathbf{)C}^{\intercal }.
\end{equation*}%
Thus,%
\begin{equation*}
(\mathbf{G}_{t_{k+1}}-\mathbf{K}_{t_{k+1}})=(\mathbf{I-G}_{t_{k+1}}\mathbf{%
C)(U}_{t_{k+1}/t_{k}}-\mathbf{\mathbf{V}}_{t_{k+1}/t_{k}}\mathbf{)C}%
^{\intercal }{\Large
(}\mathbf{CV}_{t_{k+1}/t_{k}}\mathbf{C}^{\intercal }+\Sigma
_{t_{k+1}})^{-1}
\end{equation*}%
and
\begin{equation*}
\left\vert \mathbf{G}_{t_{k+1}}-\mathbf{K}_{t_{k+1}}\right\vert \leq
\left\vert (\mathbf{I-G}_{t_{k+1}}\mathbf{C)}\right\vert \left\vert \mathbf{C%
}^{\intercal }{\Large
(}\mathbf{CV}_{t_{k+1}/t_{k}}\mathbf{C}^{\intercal
}+\Sigma _{t_{k+1}})^{-1}\right\vert \left\vert \mathbf{U}_{t_{k+1}/t_{k}}-%
\mathbf{\mathbf{V}}_{t_{k+1}/t_{k}}\right\vert .
\end{equation*}%
From the above inequalities, and taking into account that
$\left\vert
\mathbf{V}_{t_{k+1}/t_{k}}\right\vert $, $\left\vert \mathbf{G}%
_{t_{k+1}}\right\vert $, $\left\vert \Sigma_{t_{k+1}} \right\vert $
and $\left\vert \mathbf{C}\right\vert $ are also bound, it is
obtained that
\begin{equation*}
\left\vert \mathbf{x}_{t_{k+1}/t_{k+1}}-\mathbf{y}_{t_{k+1}/t_{k+1}}\right%
\vert \leq \beta _{k}h^{\beta }\ \text{\ \ \ \ and \ \ \ \ \ \
}\left\vert
\mathbf{U}_{t_{k+1}/t_{k+1}}-\mathbf{V}_{t_{k+1}/t_{k+1}}\right\vert
\leq \beta _{k}h^{\beta },
\end{equation*}%
where $\beta _{k}$ is a positive constant. This implies (\ref{LMVF9}) with $%
K_{2}=\underset{k}{\max }\{\beta _{k}\}$.
\end{proof}

Theorem \ref{Theorem CLMVF} states that, given a set of $M$ partial
and noisy observations of the states $\mathbf{x}$ on $\{t\}_{M}$,
the approximate LMV filter of Definition \ref{Approx LMV filter}
converges with rate $\beta $ to the exact LMV filter of Definition
\ref{LMV filter} as $h$ goes to zero, where $h$ is the maximum
stepsize of the time
discretization $(\tau )_{h}\supset \{t\}_{M}$ on which the approximation $%
\mathbf{y}$ to $\mathbf{x}$ is defined. This means that the
approximate filter inherits the convergence rate of the
approximation employed for its design. Note that, the convergence
results of Theorem \ref{Theorem CLMVF} can be easily extended for
noisy observations of any realization of $\mathbf{x}$ just by taking
expectation value in the inequalities (\ref{LMVF8})-(\ref{LMVF10}).
Further note that in both, Definition \ref{Approx LMV filter} and
Theorem \ref{Theorem CLMVF}, no restriction on the time partition
$\{t\}_{M}$ for the data has been assumed. Thus, there are not
specific constraints about the time distance between two consecutive
observations, which allows the application of the approximate filter
in a variety of practical problems (see, e.g.,
\cite{Riera07,Hu12a,Hu12b}) with a reduced number of not close
observations in time, with sequential random measurements, or with
multiple missing data.
Neither there are restrictions on the time discretization $(\tau )_{h}$ $%
\supset \{t\}_{M}$ on which the approximate filter is defined. Thus,
$(\tau )_{h}$ can be set by the user by taking into account some
specifications or previous knowledge on the filtering problem under
consideration, or automatically designed by an adaptive strategy as
it will be shown in the section concerning the numerical
simulations.

The order-$\beta $ LMV filter of Definition \ref{Approx LMV filter}
has been proposed for models with linear observation equation.
However, by following the procedure proposed in \cite{Jimenez06
JTSA}, it can be easily applied as well to models with nonlinear
observation equation.

To illustrate this, let us consider the state space model defined by
the
continuous state equation (\ref{SS1}) and the discrete observation equation%
\begin{equation}
\mathbf{z}_{t_{k}}=\mathbf{h}(t_{k},\mathbf{x}(t_{k}))+\mathbf{e}_{t_{k}%
},\text{ for }k=0,1,..,M-1, \label{SS3}%
\end{equation}
where $\mathbf{e}_{t_{k}}$ is defined as in (\ref{SS2}) and
$\mathbf{h}:$
$\mathbb{R}\times\mathbb{R}^{d}\rightarrow\mathbb{R}^{r}$ is a twice
differentiable function. By using the Ito formula,
\begin{align*}
d\mathbf{h}^{j}  &  =\mathbf{\{}\frac{\partial\mathbf{h}^{j}}{\partial t}%
+\sum\limits_{k=1}^{d}f^{k}\frac{\partial\mathbf{h}^{j}}{\partial
\mathbf{x}^{k}}+\frac{1}{2}\sum\limits_{s=1}^{m}\sum\limits_{k,l=1}%
^{d}\mathbf{g}_{s}^{l}\mathbf{g}_{s}^{k}\frac{\partial^{2}\mathbf{h}^{j}%
}{\partial\mathbf{x}^{l}\partial\mathbf{x}^{k}}\mathbf{\}}dt+\sum
\limits_{s=1}^{m}\sum\limits_{l=1}^{d}\mathbf{g}_{s}^{l}\frac{\partial
\mathbf{h}^{j}}{\partial\mathbf{x}^{l}}d\mathbf{w}^{s}\\
&  =\mathbf{\rho}^{j}dt+\sum\limits_{s=1}^{m}\mathbf{\sigma}_{s}%
^{j}d\mathbf{w}^{s}%
\end{align*}
with $j=1,..,r$. Hence, the state space model (\ref{SS1}) and
(\ref{SS3}) is transformed to the following higher-dimensional state
space model with linear observation
\[
d\mathbf{v}(t)=\mathbf{a}(t,\mathbf{v}(t))dt+\sum\limits_{i=1}^{m}%
\mathbf{b}_{i}(t,\mathbf{v}(t))d\mathbf{w}^{i}(t),
\]%
\[
\mathbf{z}_{t_{k}}=\mathbf{Cv}(t_{k})+\mathbf{e}_{t_{k}},\text{ for
}k=0,1,..,M-1,
\]
where
\[
\mathbf{v}=\left[
\begin{array}
[c]{l}%
\mathbf{x}\\
\mathbf{h}%
\end{array}
\right]  ,\text{ }\mathbf{a}=\left[
\begin{array}
[c]{l}%
\mathbf{f}\\
\mathbf{\rho}%
\end{array}
\right]  ,\text{ }\mathbf{b}_{i}=\left[
\begin{array}
[c]{l}%
\mathbf{g}_{i}\\
\mathbf{\sigma}_{i}%
\end{array}
\right]
\]
and the matrix $\mathbf{C}$ is such that $\mathbf{h}(t_{k},\mathbf{x}%
(t_{k}))=\mathbf{Cv}(t_{k})$.

In this way, the state space model (\ref{SS1}) and (\ref{SS3}) is
transformed to the form of the state space model
(\ref{SS1})-(\ref{SS2}), and so the order-$\beta $ LMV filter of
Definition \ref{Approx LMV filter} and the convergence result of
Theorem \ref{Theorem CLMVF} can be applied.

\section{Order-$\beta$ Local Linearization filters}

In principle, according to Theorem \ref{Theorem CLMVF}, any kind of
approximation $\mathbf{y}$ converging to $\mathbf{x}$ in a weak
sense can be used to construct approximate LMV filters (e.g., those
in \cite{Kloeden 1995}). Therefore, additional selection criterions
could be taking into account for this purpose. For instance, high
order of convergence, efficient algorithm for the computation of the
moments, and so on. In this paper, we elected the Local Linear
approximation (\ref{WLLA}) for the following reasons: 1) its first
two conditional moments have simple explicit formulas that can be
computed by means of efficient algorithm (including high dimensional
state equations) \cite{Jimenez 2002,Jimenez 2003,Jimenez 2012}; 2)
its first two conditional moments are exact for linear state
equations in all the possible variants (with additive and/or
multiplicative noise, autonomous or not) \cite{Jimenez 2002}; 3) it
has an adequate order of weak convergence for state equations with
additive noise \cite{Carbonell06}; and 4) the high effectiveness of
the conventional LL filters for the identification of complex
nonlinear models in a variety of applications (see, e.g.,
\cite{Calderon09,Chiarella09,Jimenez06 APFM,Riera04,Riera07}).

Once the order-$\beta$ Local Linear approximation (\ref{WLLA})\ is
chosen for approximating the state equation (\ref{SS1}), the well
know ordinary differential equations for the first two moments of
linear SDEs \cite{Arnold 1974} can be directly used to define the
following filter.

\begin{definition}\label{orderBLLfilter}
Given a time discretization $\left(  \tau\right)
_{h}\supset\{t\}_{M}$, the order-$\beta$ Local Linearization filter
for the state space model (\ref{SS1})-(\ref{SS2}) is defined,
between observations, by the piecewise linear equations
\begin{equation}
\frac{d\mathbf{y}_{t/t}}{dt}=\mathbf{A}(\tau_{n_{t}})\mathbf{y}_{t/t}%
+\mathbf{a}^{\beta}(t;\tau_{n_{t}}) \label{ALLF1}%
\end{equation}%
\begin{equation}
\frac{d\mathbf{P}_{t/t}}{dt}=\mathbf{A}(\tau_{n_{t}})\mathbf{P}_{t/t}%
+\mathbf{P}_{t/t}\mathbf{A}^{\intercal}(\tau_{n_{t}})+\sum\limits_{i=1}%
^{m}\mathbf{B}_{i}(\tau_{n_{t}})\mathbf{P}_{t/t}\mathbf{B}_{i}^{\intercal
}(\tau_{n_{t}})+\mathcal{B}(t;\tau_{n_{t}}) \label{ALLF2}%
\end{equation}%
\begin{equation}
\mathbf{V}_{t/t}=\mathbf{P}_{t/t}-\mathbf{y}_{t/t}\mathbf{y}_{t/t}^{\intercal}
\label{ALLF3}%
\end{equation}
for all $t\in(t_{k},t_{k+1})$, and by
\begin{equation}
\mathbf{y}_{t_{k+1}/t_{k+1}}=\mathbf{y}_{t_{k+1}/t_{k}}+\mathbf{K}_{t_{k+1}%
}\mathbf{(\mathbf{z}}_{t_{k+1}}-\mathbf{\mathbf{C}y}_{t_{k+1}/t_{k}}\mathbf{)}
\label{ALLF4}%
\end{equation}%
\begin{equation}
\mathbf{V}_{t_{k+1}/t_{k+1}}=\mathbf{V}_{t_{k+1}/t_{k}}-\mathbf{K}_{t_{k+1}%
}\mathbf{CV}_{t_{k+1}/t_{k}} \label{ALLF5}%
\end{equation}
for each observation at $t_{k+1}$, with filter gain%
\begin{equation}
\mathbf{K}_{t_{k+1}}=\mathbf{V}_{t_{k+1}/t_{k}}\mathbf{C}^{\intercal
}{\Large (}\mathbf{CV}_{t_{k+1}/t_{k}}\mathbf{C}^{\intercal}+\Sigma_{t_{k+1}%
})^{-1} \label{ALLF6}%
\end{equation}
for all $t_{k},t_{k+1}\in\{t\}_{M}$. Here,
\begin{align}
\mathcal{B}(t;s)  &
=\mathbf{a}^{\beta}(t;s)\mathbf{y}_{t/t}^{\intercal
}+\mathbf{y}_{t/t}(\mathbf{a}^{\beta}(t;s))^{\intercal}\nonumber\\
&  +\sum\limits_{i=1}^{m}\mathbf{B}_{i}(s)\mathbf{y}_{t/t}(\mathbf{b}%
_{i}^{\beta}(t;s))^{\intercal}+\mathbf{b}_{i}^{\beta}(t;s)\mathbf{y}%
_{t/t}^{\intercal}\mathbf{B}_{i}^{\intercal}(s)+\mathbf{b}_{i}^{\beta
}(t;s)(\mathbf{b}_{i}^{\beta}(t;s))^{\intercal} \label{ALLF7}%
\end{align}
with matrix functions $\mathbf{A},\mathbf{B}_{i}$ and vector
functions $\mathbf{a}^{\beta}\mathbf{,b}_{i}^{\beta}$ defined as in
the WLL approximation (\ref{WLLA}) but, replacing $\mathbf{y}(s)$ by
$\mathbf{y}_{s/s}$. The predictions $\mathbf{y}_{t/t_{k}}$,
$\mathbf{P}_{t/t_{k}}$ and $\mathbf{V}_{t/t_{k}}$ are accomplished,
respectively, via expressions (\ref{ALLF1})-(\ref{ALLF3}) with
initial conditions $\mathbf{y}_{t_{k}/t_{k}}$ and
$\mathbf{P}_{t_{k}/t_{k}}$ for $t\in(t_{k},t_{k+1}]$ and
$t_{k},t_{k+1}\in\{t\}_{M}$, and with
$\mathbf{A},\mathbf{B}_{i},\mathbf{a}^{\beta}\mathbf{,b}_{i}^{\beta}$
also defined as in (\ref{WLLA}) but, replacing $\mathbf{y}(s)$ by
$\mathbf{y}_{s/t_{k}}$.
\end{definition}

The approximate LL filter (\ref{ALLF1})-(\ref{ALLF6}) reduces to the
conventional LL filter (\ref{LLF1})-(\ref{LLF5}) when $\left(
\tau\right) _{h}$ $\equiv\{t\}_{M}$. For linear state equations with
multiplicative noise, the LL filter (\ref{ALLF1})-(\ref{ALLF6})
reduces to the LMV filter proposed in \cite{Jimenez 2002}, whereas
for linear state equations with additive noise, the LL filter
(\ref{ALLF1})-(\ref{ALLF6}) reduces to the classical Kalman filter.

According with Theorem \ref{Theorem CLMVF}, the approximate LL
filter (\ref{ALLF1})-(\ref{ALLF6}) will inherit the order of
convergence of the WLL approximation (\ref{WLLA}). As it was mention
before, the weak convergence rate of that approximation was early
stated in \cite{Carbonell06} for SDEs with additive noise. For
equations with multiplicative noise, this subject will be considered
in what follows.

\begin{lemma}
\label{LemmaBoundLL} Suppose that the drift and diffusion
coefficients of
the SDE (\ref{SS1}) satisfy the following conditions%
\begin{equation}
\mathbf{f}^{k},\mathbf{g}_{i}^{k}\in \mathcal{C}_{P}^{2(\beta
+1)}([a,b]\times \mathbb{R}^{d},\mathbb{R})\text{ } \label{LLF
ComponentsCond}
\end{equation}%
\begin{equation}
\left\vert \mathbf{f}(s,\mathbf{u})\right\vert
+\dsum\limits_{i=1}^{m}(\left\vert
\mathbf{g}_{i}(s,\mathbf{u})\right\vert
+\sum\limits_{k,l=1}^{d}\left\vert \mathbf{g}_{i}^{k}(s,\mathbf{u})\mathbf{g}%
_{i}^{l}(s,\mathbf{u})\right\vert \delta _{\beta }^{2})\leq
K(1+\left\vert \mathbf{u}\right\vert ),  \label{LLF
GrowthBoundLemma}
\end{equation}%
\begin{equation}
\left\vert \frac{\partial \mathbf{f}(s,\mathbf{u})}{\partial
t}\right\vert
+\left\vert \frac{\partial \mathbf{f}(s,\mathbf{u})}{\partial \mathbf{x}}%
\right\vert +\left\vert \frac{\partial ^{2}\mathbf{f}(s,\mathbf{u})}{%
\partial \mathbf{x}^{2}}\right\vert \delta _{\beta }^{2}\leq K
\label{LLF BoundLemma f}
\end{equation}%
and%
\begin{equation}
\left\vert \frac{\partial \mathbf{g}_{i}(s,\mathbf{u})}{\partial t}%
\right\vert +\left\vert \frac{\partial \mathbf{g}_{i}(s,\mathbf{u})}{%
\partial \mathbf{x}}\right\vert +\left\vert \frac{\partial ^{2}\mathbf{g}%
_{i}(s,\mathbf{u})}{\partial \mathbf{x}^{2}}\right\vert \delta
_{\beta }^{2}\leq K  \label{LLF BoundLemma g}
\end{equation}%
for all $s\in \lbrack a,b]$, $\mathbf{u}\in \mathbb{R}^{d}$, and
$i=1,..,m$,
where $K$ is a positive constant. Then the order-$\beta $ WLL approximation (%
\ref{WLLA}) satisfies%
\begin{equation}
E\left( \underset{a\leq t\leq b}{\sup }\left\vert
\mathbf{y}(t)\right\vert ^{2q}\text{{\LARGE
\TEXTsymbol{\vert}}}\mathcal{F}_{a}\right) \leq C(1+\left\vert
\mathbf{y}(a)\right\vert ^{2q})  \label{LLF GeneralBoundLL}
\end{equation}%
for each $q=1,2,\ldots ,$ where $C$ is positive constant.
\end{lemma}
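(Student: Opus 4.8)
The plan is to establish (\ref{LLF GeneralBoundLL}) by the classical moment-bound scheme for It\^o equations---It\^o's formula, the Burkholder--Davis--Gundy (BDG) inequality, and Gronwall's lemma---but carried out globally on $[a,b]$ rather than subinterval by subinterval, so that the piecewise (frozen-coefficient) form of (\ref{WLLA}) does not force an exponential dependence on the number of nodes.

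\textbf{Step 1 (linear growth of the coefficients).} The first task is to show that the drift and diffusion coefficients of (\ref{WLLA}) obey a bound of the form
\begin{equation*}
\left\vert \mathbf{A}(\tau_{n_t})\mathbf{y}(t)+\mathbf{a}^{\beta}(t;\tau_{n_t})\right\vert +\sum_{i=1}^{m}\left\vert \mathbf{B}_{i}(\tau_{n_t})\mathbf{y}(t)+\mathbf{b}_{i}^{\beta}(t;\tau_{n_t})\right\vert \leq \widetilde{K}\left(1+\left\vert \mathbf{y}(t)\right\vert +\left\vert \mathbf{y}(\tau_{n_t})\right\vert \right)
\end{equation*}
for all $t\in(\tau_{n},\tau_{n+1}]$. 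Writing $s=\tau_{n_t}$ and inserting the explicit expression for $\mathbf{a}^{\beta}$, the $\beta=1$ part collapses to $\mathbf{f}(s,\mathbf{y}(s))+\frac{\partial\mathbf{f}(s,\mathbf{y}(s))}{\partial\mathbf{y}}(\mathbf{y}(t)-\mathbf{y}(s))+\frac{\partial\mathbf{f}(s,\mathbf{y}(s))}{\partial s}(t-s)$, which is controlled directly by the growth bound (\ref{LLF GrowthBoundLemma}) on $\mathbf{f}$ and the derivative bounds (\ref{LLF BoundLemma f}), using $t-s\leq h<1$; the analogous computation for $\mathbf{b}_{i}^{\beta}$ uses (\ref{LLF GrowthBoundLemma}) and (\ref{LLF BoundLemma g}). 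The extra Hessian correction terms present when $\beta=2$ are precisely the ones weighted by $\delta_{\beta}^{2}$ in (\ref{LLF GrowthBoundLemma})--(\ref{LLF BoundLemma g}), so those hypotheses are exactly what is needed to give them linear growth as well. This step consumes all four structural assumptions and is mechanical rather than deep.

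\textbf{Step 2 (global estimate).} The decisive observation is that $\tau_{n_t}\leq t$, so $\left\vert \mathbf{y}(\tau_{n_t})\right\vert \leq \sup_{a\leq u\leq t}\left\vert \mathbf{y}(u)\right\vert$ and the frozen argument is absorbed into the very quantity being estimated. Applying It\^o's formula to $\left\vert \mathbf{y}(t)\right\vert ^{2q}$, bounding its drift by $C(1+\left\vert \mathbf{y}(s)\right\vert^{2q}+\left\vert \mathbf{y}(\tau_{n_s})\right\vert^{2q})$ via Step 1 and Young's inequality, and treating the martingale part through the conditional BDG inequality, one absorbs the resulting $\varepsilon\sup\left\vert \mathbf{y}\right\vert^{2q}$ term into the left-hand side. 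With $\Psi(T)=E(\sup_{a\leq t\leq T}\left\vert \mathbf{y}(t)\right\vert^{2q}\mid\mathcal{F}_{a})$ this yields $\Psi(T)\leq C(1+\left\vert \mathbf{y}(a)\right\vert^{2q})+C\int_{a}^{T}\Psi(s)\,ds$, and Gronwall's lemma then gives (\ref{LLF GeneralBoundLL}) with $C$ depending only on $q$, $m$, $b-a$ and the constant $K$.

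\textbf{Step 3 (finiteness) and main obstacle.} To invoke Gronwall I must first know $\Psi$ is finite; this is handled by the usual localization, running the argument with the stopping times $\rho_{R}=\inf\{t:\left\vert \mathbf{y}(t)\right\vert\geq R\}$, deriving the bound for $t\wedge\rho_{R}$ with a constant independent of $R$, and letting $R\to\infty$ by Fatou. The genuine difficulty is not any single estimate but the frozen evaluation $\mathbf{y}(\tau_{n_t})$ in the coefficients, which destroys the Markovian structure and would, under a naive subinterval-chaining argument, accumulate a factor growing like $C^{N}$ as $h\to 0$. The remedy---folding $\left\vert \mathbf{y}(\tau_{n_t})\right\vert$ into $\sup_{a\leq u\leq t}\left\vert \mathbf{y}(u)\right\vert$ and closing a single global Gronwall inequality on $[a,b]$---is what keeps the constant $C$ in (\ref{LLF GeneralBoundLL}) independent of the mesh.
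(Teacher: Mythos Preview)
Your proposal is correct and follows essentially the same scheme as the paper: establish linear growth of the frozen coefficients from (\ref{LLF GrowthBoundLemma})--(\ref{LLF BoundLemma g}), apply It\^o's formula to $|\mathbf{y}(t)|^{2q}$ globally on $[a,b]$, absorb $|\mathbf{y}(\tau_{n_t})|$ into the running supremum, and close with Gronwall. The only notable difference is that the paper disposes of the martingale term by first citing Theorem~4.5.4 of Kloeden--Platen for moment finiteness and then using that the stochastic integral has zero mean, whereas you use BDG and a stopping-time localization---a slightly more careful variant of the same step.
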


\begin{proof}
Let us denote the drift and diffusion coefficients of the SDE
(\ref{WLLA}) by
\begin{equation*}
\mathbf{p}(t,\mathbf{y}(t)\mathbf{;}\tau_{n_{t}})=\mathbf{A}(\tau_{n_{t}})%
\mathbf{y}(t)+\mathbf{a}^{\mathbb{\beta}}(t;\tau_{n_{t}})
\end{equation*}
and%
\begin{equation*}
\mathbf{q}_{i}(t,\mathbf{y}(t)\mathbf{;}\tau_{n_{t}})=\mathbf{B}%
_{i}(\tau_{n_{t}})\mathbf{y}(t)+\mathbf{b}_{i}^{\mathbb{\beta}%
}(t;\tau_{n_{t}}),
\end{equation*}
respectively.

For each $q$, the Ito formula applied to $\left\vert \mathbf{y(}t\mathbf{)}%
\right\vert ^{2q}$ implies that
\begin{align*}
\left\vert \mathbf{y(}t\mathbf{)}\right\vert ^{2q} & =\left\vert \mathbf{y(}%
\tau_{n_{t}}\mathbf{)}\right\vert
^{2q}+\int\limits_{\tau_{n_{t}}}^{t}2q\left\vert
\mathbf{y}(s)\right\vert ^{2q-2}\mathbf{y}^{\intercal
}(s)\mathbf{p}(s,\mathbf{y}(s);\tau_{n_{t}})ds
\\
& +\sum\limits_{i=1}^{m}\int\limits_{\tau_{n_{t}}}^{t}2q\left\vert \mathbf{y}%
(s)\right\vert ^{2q-2}\mathbf{y}^{\intercal}(s)\mathbf{q}_{i}(s,\mathbf{y}(s)%
\mathbf{;}\tau_{n_{t}})d\mathbf{w}^{i}(s) \\
& +\sum\limits_{i=1}^{m}\int\limits_{\tau_{n_{t}}}^{t}q\left\vert \mathbf{y}%
(s)\right\vert ^{2q-2}\left\vert \mathbf{q}_{i}(s,\mathbf{y}(s)\mathbf{;}%
\tau_{n_{t}})\right\vert ^{2}ds \\
&
+\sum\limits_{i=1}^{m}\int\limits_{\tau_{n_{t}}}^{t}2q(q-1)\left\vert
\mathbf{y}(s)\right\vert ^{2q-4}\left\vert \mathbf{y}^{\intercal}(s)\mathbf{q%
}_{i}(s,\mathbf{y}(s)\mathbf{;}\tau_{n_{t}})\right\vert ^{2}ds
\end{align*}
for all $t\in\lbrack\tau_{n_{t}},\tau_{n_{t}+1}]$.

By recursive application of the expression above it is obtained that
\begin{align*}
\left\vert \mathbf{y(}t\mathbf{)}\right\vert ^{2q}& =\left\vert \mathbf{y}%
(a)\right\vert ^{2q}+\int\limits_{a}^{t}2q\left\vert \mathbf{y}%
(s)\right\vert ^{2q-2}\mathbf{y}^{\intercal }(s)\mathbf{p}(s,\mathbf{y}%
(s);\tau _{n_{s}})ds \\
& +\sum\limits_{i=1}^{m}\int\limits_{a}^{t}2q\left\vert \mathbf{y}%
(s)\right\vert ^{2q-2}\mathbf{y}^{\intercal }(s)\mathbf{q}_{i}(s,\mathbf{y}%
(s)\mathbf{;}\tau _{n_{s}})d\mathbf{w}^{i}(s) \\
& +\sum\limits_{i=1}^{m}\int\limits_{a}^{t}q\left\vert \mathbf{y}%
(s)\right\vert ^{2q-2}\left\vert \mathbf{q}_{i}(s,\mathbf{y}(s)\mathbf{;}%
\tau _{n_{s}})\right\vert ^{2}ds \\
& +\sum\limits_{i=1}^{m}\int\limits_{a}^{t}2q(q-1)\left\vert \mathbf{y}%
(s)\right\vert ^{2q-4}\left\vert \mathbf{y}^{\intercal }(s)\mathbf{q}_{i}(s,%
\mathbf{y}(s)\mathbf{;}\tau _{n_{s}})\right\vert ^{2}ds
\end{align*}%
for all $t\in \lbrack a,b]$.

Theorem 4.5.4 in \cite{Kloeden 1995} implies that $E\left(
\left\vert \mathbf{y}(t)\right\vert ^{2q}\right) <\infty $ for
$a\leq t\leq b$. Hence, the function $\mathbf{r}$ defined as
$\mathbf{r}(t)=\mathbf{0}$ for $0\leq
t<a$ and as $\mathbf{r}(t)=\left\vert \mathbf{y}(t)\right\vert ^{2q-2}%
\mathbf{y}^{\intercal
}(t)\mathbf{q}_{i}(t,\mathbf{y}(t)\mathbf{;}\tau _{n_{t}})$ for
$a\leq t\leq b$ belongs to the class $\mathcal{L}_{b}^{2}$ of
function $\mathcal{L}\times \mathcal{F}-$ measurable$.$ Then, Lemma
3.2.2 in
\cite{Kloeden 1995} implies that%
\begin{equation*}
E\left( \int\limits_{a}^{t}\left\vert \mathbf{y}(s)\right\vert ^{2q-2}%
\mathbf{y}^{\intercal
}(s)\mathbf{q}_{i}(s,\mathbf{y}(s)\mathbf{;}\tau
_{n_{s}})d\mathbf{w}^{i}(s)\right) =0
\end{equation*}%
for all $i=1,..,m$. From this and the previous expression for
$\left\vert
\mathbf{y(}t\mathbf{)}\right\vert ^{2q}$ follows that%
\begin{align*}
E\left( \underset{a\leq u\leq t}{\sup }\left\vert
\mathbf{y}(u)\right\vert ^{2q}\text{{\LARGE
\TEXTsymbol{\vert}}}\mathcal{F}_{a}\right) & \leq \left\vert
\mathbf{y}(a)\right\vert ^{2q}+2q\int\limits_{a}^{t}E\left(
\left\vert \mathbf{y}(s)\right\vert ^{2q-2}\left\vert
\mathbf{y}^{\intercal }(s)\mathbf{p}(s,\mathbf{y}(s);\tau
_{n_{s}})\right\vert \text{{\LARGE
\TEXTsymbol{\vert}}}\mathcal{F}_{a}\right) ds \\
& +q\sum\limits_{i=1}^{m}\int\limits_{a}^{t}E\left( \left\vert \mathbf{y}%
(s)\right\vert ^{2q-2}\left\vert \mathbf{q}_{i}(s,\mathbf{y}(s)\mathbf{;}%
\tau _{n_{s}})\right\vert ^{2}\text{{\LARGE \TEXTsymbol{\vert}}}\mathcal{F}%
_{a}\right) ds \\
& +2q(q-1)\sum\limits_{i=1}^{m}\int\limits_{a}^{t}E\left( \left\vert \mathbf{%
y}(s)\right\vert ^{2q-4}\left\vert \mathbf{y}^{\intercal }(s)\mathbf{q}%
_{i}(s,\mathbf{y}(s)\mathbf{;}\tau _{n_{s}})\right\vert
^{2}\text{{\LARGE \TEXTsymbol{\vert}}}\mathcal{F}_{a}\right) ds.
\end{align*}%
From conditions (\ref{LLF GrowthBoundLemma})-(\ref{LLF BoundLemma
g})
follows that%
\begin{equation*}
\left\vert \mathbf{p}(s,\mathbf{y}(s)\mathbf{;}\tau
_{n_{s}})\right\vert \leq K(\left\vert \mathbf{y}(s)\right\vert
+\left\vert \mathbf{y}(\tau _{n_{s}})\right\vert )+K_{\beta
}(1+|\mathbf{y}(s)|)+K
\end{equation*}%
and%
\begin{equation*}
\left\vert \mathbf{q}_{i}(s,\mathbf{y}(s)\mathbf{;}\tau
_{n_{s}})\right\vert \leq K(\left\vert \mathbf{y}(s)\right\vert
+\left\vert \mathbf{y}(\tau _{n_{s}})\right\vert )+K_{\beta
}(1+|\mathbf{y}(s)|)+K,
\end{equation*}%
where%
\begin{equation*}
K_{\beta }=\left\{
\begin{array}{cc}
K & \text{ for }\beta =1 \\
K(1+\frac{1}{2}K) & \text{ for }\beta =2%
\end{array}%
\right. .
\end{equation*}%
Thus, there exists a positive constant $C$ such that%
\begin{equation*}
\left\vert \mathbf{y}^{\intercal }(s)\mathbf{p}(s,\mathbf{y}(s)\mathbf{;}%
\tau _{n_{s}})\right\vert \leq C(1+|\mathbf{y(}s\mathbf{)}|^{2})+C(1+|%
\mathbf{y}(\tau _{n_{s}})|^{2}),
\end{equation*}%
\begin{equation*}
\left\vert \mathbf{q}_{i}(s,\mathbf{y}(s);\tau _{n_{s}})\right\vert
^{2}\leq C(1+|\mathbf{y(}s\mathbf{)}|^{2})+C(1+|\mathbf{y}(\tau
_{n_{s}})|^{2}),
\end{equation*}%
\begin{equation*}
\left\vert \mathbf{y}^{\intercal
}(s)\mathbf{q}_{i}(s,\mathbf{y}(s);\tau
_{n_{s}})\right\vert ^{2}\leq C|\mathbf{y(}s\mathbf{)}|^{2}(1+|\mathbf{y(}s%
\mathbf{)}|^{2})+C|\mathbf{y(}s\mathbf{)}|^{2}(1+|\mathbf{y}(\tau
_{n_{s}})|^{2}),
\end{equation*}%
and so
\begin{equation*}
E\left( \underset{a\leq u\leq t}{\sup }\left\vert
\mathbf{y}(u)\right\vert ^{2q}\text{{\LARGE
\TEXTsymbol{\vert}}}\mathcal{F}_{a}\right) \leq \left\vert
\mathbf{y}_{0}\right\vert ^{2q}+L\int\limits_{a}^{t}E\left(
\underset{a\leq u\leq s}{\sup }(1+\left\vert
\mathbf{y}(u)\right\vert
^{2})\left\vert \mathbf{y}(u)\right\vert ^{2q-2}\text{{\LARGE \TEXTsymbol{%
\vert}}}\mathcal{F}_{a}\right) ds,
\end{equation*}%
where $L=2qC(2+2qm-m)$. From the inequality $(1+z^{2})z^{2q-2}\leq 1+2z^{2q}$%
,%
\begin{equation*}
E\left( \underset{a\leq u\leq t}{\sup }\left\vert
\mathbf{y}(u)\right\vert ^{2q}\text{{\LARGE
\TEXTsymbol{\vert}}}\mathcal{F}_{a}\right) \leq \left\vert
\mathbf{y}_{0}\right\vert
^{2q}+L(t-a)+2L\int\limits_{a}^{t}E\left( \underset{a\leq u\leq s}{\sup }%
\left\vert \mathbf{y(}u)\right\vert ^{2q}\text{{\LARGE \TEXTsymbol{\vert}}}%
\mathcal{F}_{a}\right) ds.
\end{equation*}%
From this and the Gronwall Lemma, the assertion of the Theorem is
obtained.
\end{proof}

In what follows, additional notations and results of \cite{Kloeden
1995} will be used. Briefly recall us that $\mathcal{M}$ denotes the
set of all the multi-indexes $\alpha=(j_{1},\ldots,j_{l(\alpha)})$
with $j_{i}\in
\{0,1,\ldots,m\}$ and $i=1,\ldots,l(\alpha)$, where $m$ is the dimension of $%
\mathbf{w}$ in (\ref{SS1}). $l(\alpha)$ denotes the length of the
multi-index $\alpha$ and $n(\alpha)$ the number of its zero components. $%
-\alpha$ and $\alpha-$ are the multi-indexes in $\mathcal{M}$
obtained by deleting the first and the last component of $\alpha$,
respectively. The multi-index of length zero will be denoted by $v$.
Further,
\begin{equation*}
L^{0}=\frac{\partial}{\partial t}+\sum\limits_{k=1}^{d}\mathbf{f}^{k}\frac{%
\partial}{\partial\mathbf{x}^{k}}+\frac{1}{2}\sum\limits_{k,l=1}^{d}\sum%
\limits_{j=1}^{m}\mathbf{g}_{j}^{k}\mathbf{g}_{j}^{l}\text{ }\frac{%
\partial^{2}}{\partial\mathbf{x}^{k}\partial\mathbf{x}^{l}}
\end{equation*}
denotes the diffusion operator for the SDE (\ref{SS1}), and
\begin{equation*}
L^{j}=\sum\limits_{k=1}^{d}\mathbf{g}_{j}^{k}\frac{\partial}{\partial
\mathbf{x}^{k}},
\end{equation*}
for $j=1,\ldots,m$.

Let us consider the hierarchical set

\begin{equation*}
\Gamma_{\beta}=\left\{
\alpha\in\mathcal{M}:l(\alpha)\leq\beta\right\}
\end{equation*}
with $\beta=1,2$; and $\mathcal{B}(\Gamma_{\beta})=\{\mathbb{\alpha}\in%
\mathcal{M}\backslash\Gamma_{\beta}:-\mathbb{\alpha}\in\Gamma_{\beta}\}$
the remainder set of $\Gamma_{\beta}$.

\begin{lemma}
\label{LemmaLL} Let $\mathbf{y}$ be the order-$\beta $ WLL approximation (%
\ref{WLLA}), and $\mathbf{z}=\{\mathbf{z}(t),$ $t\in \lbrack a,b]\}$
be the
stochastic process defined by%
\begin{equation}
\mathbf{z}(t)=\mathbf{y}_{n_{t}}+\sum\limits_{\alpha \in \Gamma
_{\beta
}/\{\nu \}}I_{\alpha }[\Lambda _{\alpha }(\tau _{n_{t}},\mathbf{y}%
_{n_{t}};\tau _{n_{t}})]_{\tau _{n_{t}},t}+\sum\limits_{\alpha \in \mathcal{B%
}(\Gamma _{\beta })}I_{\alpha }[\Lambda _{\alpha
}(.,\mathbf{y}.;\tau _{n_{t}})]_{\tau _{n_{t}},t},
\label{WSDE-LLA-3}
\end{equation}%
where $I_{\alpha }[.]_{\tau _{n_{t}},t}$ denotes the multiple Ito
integral
and, for any given $(\tau _{n_{t}},\mathbf{y}_{n_{t}})$,%
\begin{equation*}
\Lambda _{\mathbb{\alpha }}(s,\mathbf{v};\tau _{n_{t}})=\left\{
\begin{array}{cc}
L^{j_{1}}\ldots L^{j_{l(\alpha )-1}}\mathbf{p}^{\beta
}(s,\mathbf{v};\tau
_{n_{t}}) & \text{ }if\text{ }j_{l(\alpha )}=0 \\
L^{j_{1}}\ldots L^{j_{l(\alpha )-1}}\mathbf{q}_{j_{l(\mathbb{\alpha )}%
}}^{\beta }(s,\mathbf{v};\tau _{n_{t}}) & \text{ }if\text{
}j_{l(\alpha
)}\neq 0%
\end{array}%
\right.
\end{equation*}%
is a function of $s$ and $\mathbf{v}$, with
\begin{equation*}
\mathbf{p}^{\beta }(s,\mathbf{v;}\tau _{n_{t}})=\mathbf{A}(\tau _{n_{t}})%
\mathbf{v}+\mathbf{a}^{\mathbb{\beta }}(s;\tau _{n_{t}})\text{ \ \ \
\ \ \ \
and \ \ \ \ \ \ \ }\mathbf{q}_{i}^{\beta }(s,\mathbf{v;}\tau _{n_{t}})=%
\mathbf{B}_{i}(\tau _{n_{t}})\mathbf{v}+\mathbf{b}_{i}^{\mathbb{\beta }%
}(s;\tau _{n_{t}}),
\end{equation*}%
for all $s\in \lbrack a,b]$ and $\mathbf{v}\in \mathbb{R}^{d}$, and
matrix
functions $\mathbf{A},\mathbf{B}_{i}$ and vector functions $\mathbf{a}%
^{\beta }\mathbf{,b}_{i}^{\beta }$ defined as in the WLL approximation (\ref%
{WLLA}). Then%
\begin{equation}
E\left( g(\mathbf{y}(t))\right) =E\left( g(\mathbf{z}(t))\right) ,
\label{LLF IdentityMoments}
\end{equation}%
\begin{equation}
E\left( g(\mathbf{y}(t)-\mathbf{y}(\tau _{n_{t}}))\right) =E\left( g(\mathbf{%
z}(t)-\mathbf{z}(\tau _{n_{t}}))\right)   \label{LLF IdentityMoments
b}
\end{equation}%
for all $t\in \lbrack a,b]$ and $g\in \mathcal{C}_{P}^{2(\beta +1)}(\mathbb{R%
}^{d},\mathbb{R})$; and%
\begin{equation}
I_{\alpha }[\Lambda _{\alpha }(\tau _{n_{t}},\mathbf{y}_{n_{t}};\tau
_{n_{t}})]_{\tau _{n_{t}},t}=I_{\alpha }[\lambda _{\alpha }(\tau _{n_{t}},%
\mathbf{y}_{n_{t}})]_{\tau _{n_{t}},t},  \label{LLF
IdentityIntegrals}
\end{equation}%
for all $\alpha \in \Gamma _{\beta }/\{\nu \}$ and $t\in \lbrack
a,b]$, where $\lambda _{\alpha }$ denotes the Ito coefficient
function corresponding to the SDE (\ref{SS1}).
\end{lemma}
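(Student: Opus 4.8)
The plan is to handle the three assertions separately: the two moment identities (\ref{LLF IdentityMoments})--(\ref{LLF IdentityMoments b}) follow from recognizing $\mathbf{z}$ as the Ito--Taylor expansion of the WLL process $\mathbf{y}$, while the real computational work lies in the coefficient identity (\ref{LLF IdentityIntegrals}). First I would observe that, by construction, the right-hand side of (\ref{WSDE-LLA-3}) is precisely the stochastic Ito--Taylor expansion of the solution $\mathbf{y}$ of the piecewise linear SDE (\ref{WLLA}) about the left endpoint $\tau_{n_t}$: the leading terms range over the hierarchical set $\Gamma_\beta$ with coefficient functions frozen at $(\tau_{n_t},\mathbf{y}_{n_t})$, and the remainder terms range over $\mathcal{B}(\Gamma_\beta)$ with the coefficient functions retained inside the integral. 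The smoothness and growth hypotheses (\ref{LLF ComponentsCond})--(\ref{LLF BoundLemma g}), together with the uniform moment bound of Lemma \ref{LemmaBoundLL}, ensure that every $\Lambda_\alpha$ has polynomial growth and that the iterated integrals are well defined and integrable, so the Ito--Taylor theorem of \cite{Kloeden 1995} applies and the expansion reproduces $\mathbf{y}(t)$. Hence $\mathbf{z}(t)$ and $\mathbf{y}(t)$ agree as random variables, giving (\ref{LLF IdentityMoments}) for every $g\in\mathcal{C}_P^{2(\beta+1)}(\mathbb{R}^d,\mathbb{R})$; and since both expansions collapse to $\mathbf{y}_{n_t}$ at $t=\tau_{n_t}$, the increment identity (\ref{LLF IdentityMoments b}) follows immediately.

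For the coefficient identity (\ref{LLF IdentityIntegrals}) I would first note that the integrand $\Lambda_\alpha(\tau_{n_t},\mathbf{y}_{n_t};\tau_{n_t})$ is $\mathcal{F}_{\tau_{n_t}}$-measurable and constant over $[\tau_{n_t},t]$, so it factors out of the multiple Ito integral, $I_\alpha[\Lambda_\alpha(\tau_{n_t},\mathbf{y}_{n_t};\tau_{n_t})]_{\tau_{n_t},t}=\Lambda_\alpha(\tau_{n_t},\mathbf{y}_{n_t};\tau_{n_t})\,I_\alpha[1]_{\tau_{n_t},t}$, and likewise for $\lambda_\alpha$. Thus (\ref{LLF IdentityIntegrals}) is equivalent to the pointwise equalities $\Lambda_\alpha(\tau_{n_t},\mathbf{y}_{n_t};\tau_{n_t})=\lambda_\alpha(\tau_{n_t},\mathbf{y}_{n_t})$ for every $\alpha\in\Gamma_\beta/\{\nu\}$, and since $\beta\le 2$ this is a finite check. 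For the length-one indices no operator is applied; here the time-dependent $(t-s)$ correction terms in $\mathbf{a}^\beta,\mathbf{b}_i^\beta$ vanish at $t=s=\tau_{n_t}$, and the remaining constant parts combine with the affine term so that $\mathbf{A}(\tau_{n_t})\mathbf{y}_{n_t}+\mathbf{a}^\beta(\tau_{n_t};\tau_{n_t})=\mathbf{f}(\tau_{n_t},\mathbf{y}_{n_t})$ (the internal $-\mathbf{A}(\tau_{n_t})\mathbf{y}_{n_t}$ cancelling the affine term), and similarly $\mathbf{B}_i(\tau_{n_t})\mathbf{y}_{n_t}+\mathbf{b}_i^\beta(\tau_{n_t};\tau_{n_t})=\mathbf{g}_i(\tau_{n_t},\mathbf{y}_{n_t})$, i.e. $\mathbf{p}^\beta=\mathbf{f}$ and $\mathbf{q}_i^\beta=\mathbf{g}_i$ at the base point.

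For the length-two indices (only present when $\beta=2$) exactly one operator $L^{j_1}$ is applied to a function affine in the state. I would exploit that $\mathbf{p}^\beta,\mathbf{q}_i^\beta$ are affine in their spatial argument, so their second and higher spatial derivatives vanish and only the constant Jacobians $\mathbf{A}(\tau_{n_t}),\mathbf{B}_i(\tau_{n_t})$ and the $s$-derivatives of $\mathbf{a}^\beta,\mathbf{b}_i^\beta$ survive. When $j_1\neq 0$ the operator $L^{j_1}$ reads off the first spatial derivative, reducing $L^{j_1}\mathbf{p}^2$ and $L^{j_1}\mathbf{q}_{j_2}^2$ at the base point to $\mathbf{A}(\tau_{n_t})\mathbf{g}_{j_1}$ and $\mathbf{B}_{j_2}(\tau_{n_t})\mathbf{g}_{j_1}$, which coincide with $L^{j_1}\mathbf{f}$ and $L^{j_1}\mathbf{g}_{j_2}$ evaluated at $(\tau_{n_t},\mathbf{y}_{n_t})$.

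The decisive case, and the one I expect to be the main obstacle, is when the applied operator is the generator $L^0$ (first index zero): then $L^0\mathbf{p}^2$ and $L^0\mathbf{q}_{j_2}^2$ lose their entire second-derivative (diffusion) contribution because the base functions are affine, so at the base point $L^0\mathbf{p}^2$ equals $\partial_s\mathbf{a}^2(\,\cdot\,;\tau_{n_t})+\mathbf{A}(\tau_{n_t})\mathbf{f}$. The verification therefore hinges on the second-order term $\tfrac12\sum_{j,l}[\mathbf{G}\mathbf{G}^{\intercal}]^{j,l}\,\partial^2\mathbf{f}/\partial\mathbf{y}^j\partial\mathbf{y}^l\,(t-s)$ built into $\mathbf{a}^2$: differentiating it in $s$ produces exactly the missing diffusion contribution $\tfrac12\sum_{k,l}[\mathbf{G}\mathbf{G}^{\intercal}]^{k,l}\,\partial^2\mathbf{f}/\partial\mathbf{x}^k\partial\mathbf{x}^l$, so that $L^0\mathbf{p}^2$ recovers the full $L^0\mathbf{f}=\partial_t\mathbf{f}+\tfrac{\partial\mathbf{f}}{\partial\mathbf{x}}\mathbf{f}+\tfrac12\sum_{k,l}[\mathbf{G}\mathbf{G}^{\intercal}]^{k,l}\,\partial^2\mathbf{f}/\partial\mathbf{x}^k\partial\mathbf{x}^l$ at $(\tau_{n_t},\mathbf{y}_{n_t})$, and analogously $L^0\mathbf{q}_{j_2}^2$ recovers $L^0\mathbf{g}_{j_2}$ through the corresponding correction in $\mathbf{b}_{j_2}^2$. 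Tracking these derivatives and confirming that the built-in correction terms reproduce the diffusion part of $L^0$ exactly is the delicate bookkeeping; everything else is bounded, finite-dimensional algebra.
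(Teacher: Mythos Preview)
Your proposal is correct and follows essentially the same route as the paper: identify $\mathbf{z}$ as the (exact) Ito--Taylor expansion of the solution of the piecewise linear equation (\ref{WLLA}), so that (\ref{LLF IdentityMoments})--(\ref{LLF IdentityMoments b}) are immediate, and then verify (\ref{LLF IdentityIntegrals}) by checking $\Lambda_\alpha(\tau_{n_t},\mathbf{y}_{n_t};\tau_{n_t})=\lambda_\alpha(\tau_{n_t},\mathbf{y}_{n_t})$ for each $\alpha\in\Gamma_2\setminus\{\nu\}$. The paper simply lists the six $\lambda_\alpha$ and declares the match ``not difficult'', whereas you spell out the key mechanism (in particular how the $(t-s)$ correction in $\mathbf{a}^2,\mathbf{b}_i^2$ supplies the missing second-derivative term when $L^0$ is applied to the affine drift/diffusion); note, however, that your appeal to hypotheses (\ref{LLF ComponentsCond})--(\ref{LLF BoundLemma g}) and Lemma~\ref{LemmaBoundLL} is unnecessary here, since the coefficients of (\ref{WLLA}) are already affine and the Ito--Taylor identity holds without those extra assumptions.
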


\begin{proof}
The identities (\ref{LLF IdentityMoments})-(\ref{LLF IdentityMoments
b}) trivially hold, since (\ref{WSDE-LLA-3}) is the order-$\beta $
weak
Ito-Taylor expansion of the solution of the piecewise linear equation (\ref%
{WLLA}) with initial value $\mathbf{y}(a)=\mathbf{y}_{0}$.

By simple calculations it is obtained that Ito coefficient functions $%
\lambda_{\alpha}$ corresponding to the SDE (\ref{SS1}) are%
\begin{align*}
\lambda_{(0)}^{k} & =\mathbf{f}^{k}, \\
\lambda_{(j)}^{k} & =\mathbf{g}_{j}^{k}, \\
\lambda_{(0,j)}^{k} & =\frac{\partial\mathbf{g}_{j}^{k}}{\partial t}%
+\sum\limits_{i=1}^{d}\mathbf{f}^{i}\frac{\partial\mathbf{g}_{j}^{k}}{%
\partial\mathbf{x}^{i}}+\frac{1}{2}\sum\limits_{i,l=1}^{d}\sum%
\limits_{j=1}^{m}\mathbf{g}_{j}^{i}\mathbf{g}_{j}^{l}\text{ }\frac{%
\partial^{2}\mathbf{g}_{j}^{k}}{\partial\mathbf{x}^{i}\partial\mathbf{x}^{l}}%
, \\
\lambda_{(j,0)}^{k} & =\sum\limits_{i=1}^{d}\mathbf{g}_{j}^{i}\text{ }\frac{%
\partial\mathbf{f}^{k}}{\partial\mathbf{x}^{i}}, \\
\lambda_{(0,0)}^{k} & =\frac{\partial\mathbf{f}^{k}}{\partial t}%
+\sum\limits_{i=1}^{d}\mathbf{f}^{i}\frac{\partial\mathbf{f}^{k}}{\partial%
\mathbf{x}^{i}}+\frac{1}{2}\sum\limits_{i,l=1}^{d}\sum\limits_{j=1}^{m}%
\mathbf{g}_{j}^{i}\mathbf{g}_{j}^{l}\text{ }\frac{\partial^{2}\mathbf{f}^{k}%
}{\partial\mathbf{x}^{i}\partial\mathbf{x}^{l}}, \\
\lambda_{(i,j)}^{k} & =\sum\limits_{l=1}^{d}\mathbf{g}_{i}^{l}\frac {\partial%
\mathbf{g}_{j}^{k}}{\partial\mathbf{x}^{l}}
\end{align*}
for $\alpha\in\Gamma_{2}$. By taking into account that
$\mathbf{p}^{\beta }(s,\mathbf{v};\tau_{n})$ and
$\mathbf{q}_{i}^{\beta}(s,\mathbf{v};\tau_{n})$ are linear functions
of $s$ and $\mathbf{v}$, it is not difficult to obtain
that $\Lambda_{\mathbb{\alpha}}(\tau_{n_{s}},\mathbf{y}_{n_{s}};%
\tau_{n_{s}})=\lambda_{\mathbb{\alpha}}(\tau_{n_{s}},\mathbf{y}%
_{n_{s}})_{t_{n_{s}},s}$ for all $\alpha\in\Gamma_{2}$, which implies (\ref%
{LLF IdentityIntegrals}).
\end{proof}

Note that, the stochastic process $\mathbf{z}$ defined in the
previous lemma
is solution of the piecewise linear SDE (\ref{WLLA}) and $\Lambda _{\mathbb{%
\alpha}}$ denotes the Ito coefficient functions corresponding to
that equation. Therefore, (\ref{WSDE-LLA-3}) is the Ito-Taylor
expansion of the Local Linear approximation (\ref{WLLA}).

The main convergence result for the WLL approximations is them
stated in the following theorem.

\begin{theorem}
\label{LLF TheoremConvAppLL}Let $\mathbf{x}$ be the solution of the SDE (\ref%
{SS1}) on $[a,b]$, and $\mathbf{y}$ the order-$\beta $ weak Local
Linear approximation of $\mathbf{x}$ defined by (\ref{WLLA}).
Suppose that the drift and
diffusion coefficients of the SDE (\ref{SS1}) satisfy the conditions (\ref%
{LLF ComponentsCond})-(\ref{LLF BoundLemma g}). Further, suppose
that the initial values of $\mathbf{x}$ and $\mathbf{y}$ satisfy the
conditions
\begin{equation*}
E(\left\vert \mathbf{x}(a)\right\vert ^{q})<\infty
\end{equation*}%
and
\begin{equation*}
\left\vert E\left( g(\mathbf{x}(a))\right) -E\left(
g(\mathbf{y}(a))\right) \right\vert \leq C_{0}h^{\beta }
\end{equation*}%
for $q=1,2,\ldots ,$ some constant $C_{0}>0$ and all $g\in \mathcal{C}%
_{P}^{2(\beta +1)}(\mathbb{R}^{d},\mathbb{R})$. Then there exits a
positive
constant $C$ such that%
\begin{equation}
\left\vert E\left( g(\mathbf{x}(b))\text{{\LARGE
\TEXTsymbol{\vert}}}\mathcal{F}_{a}\right) -E\left(
g(\mathbf{y}(b))\text{{\LARGE
\TEXTsymbol{\vert}}}\mathcal{F}_{a}\right) \right\vert \leq
C(b-a)h^{\beta }. \label{GlobalOrder}
\end{equation}
\end{theorem}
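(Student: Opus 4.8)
The plan is to prove the one-step local weak error bound $\left\vert E(g(\mathbf{x}(b))\vert\mathcal{F}_a) - E(g(\mathbf{y}(b))\vert\mathcal{F}_a)\right\vert \leq C(b-a)h^\beta$ by telescoping along the time discretization $(\tau)_h$ and accumulating per-step errors of order $h^{\beta+1}$, which over $O((b-a)/h)$ steps yields the stated global bound $C(b-a)h^\beta$. The natural tool is the standard one-step/global weak convergence machinery of Kloeden--Platen: I would compare, on each subinterval $[\tau_n,\tau_{n+1}]$, the weak Ito--Taylor expansion of the true solution $\mathbf{x}$ with that of the Local Linear approximation $\mathbf{y}$, both started from a common point, and show the discrepancy is controlled by $h^{\beta+1}$.

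First I would invoke Lemma \ref{LemmaLL}: the process $\mathbf{z}$ defined in (\ref{WSDE-LLA-3}) is the order-$\beta$ weak Ito--Taylor expansion of $\mathbf{y}$, and by the identity (\ref{LLF IdentityIntegrals}) the multiple Ito integrals $I_\alpha[\Lambda_\alpha]$ for $\alpha\in\Gamma_\beta\setminus\{\nu\}$ coincide exactly with the corresponding terms $I_\alpha[\lambda_\alpha]$ of the true Ito--Taylor expansion of $\mathbf{x}$. Hence, started from the same initial point $\mathbf{y}_{n_t}$, the expansions of $\mathbf{x}$ and $\mathbf{y}$ agree through all multi-indices in $\Gamma_\beta$; they differ only in the remainder terms indexed by $\mathcal{B}(\Gamma_\beta)$. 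Using (\ref{LLF IdentityMoments}) to replace $E(g(\mathbf{y}))$ by $E(g(\mathbf{z}))$, the local one-step weak error reduces to estimating the expectation of $g$ applied to these remainder integrals. By the standard moment estimates for multiple Ito integrals (Kloeden--Platen, Lemma 5.7.x) each integral $I_\alpha[\cdot]_{\tau_n,\tau_{n+1}}$ with $\alpha\in\mathcal{B}(\Gamma_\beta)$ contributes a factor $h^{(l(\alpha)+n(\alpha))/2}\geq h^{\beta+1/2}$, and—after Taylor-expanding $g$ and noting that the leading remainder terms pair into contributions of integer order—this gives a local weak error of order $h^{\beta+1}$, uniformly in $n$. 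The growth and smoothness conditions (\ref{LLF ComponentsCond})--(\ref{LLF BoundLemma g}) guarantee that the coefficient functions $\Lambda_\alpha$ and $\lambda_\alpha$ have the polynomial growth needed for these expectations to be finite.

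Next I would assemble the global error by telescoping. Writing $E(g(\mathbf{x}(b))\vert\mathcal{F}_a)-E(g(\mathbf{y}(b))\vert\mathcal{F}_a)$ as a sum over $n$ of the contributions of replacing a true transition on $[\tau_n,\tau_{n+1}]$ by an LL transition—and propagating the remaining evolution by the exact semigroup—reduces the global error to a sum of $N\approx (b-a)/h$ local errors, each of size $C h^{\beta+1}$. The crucial ingredient that makes this telescoping legitimate is a uniform moment bound on $\mathbf{y}$, which is precisely what Lemma \ref{LemmaBoundLL} supplies: the bound (\ref{LLF GeneralBoundLL}) ensures that the test function $g$ composed with the propagated solution stays in the polynomial-growth class with uniformly controlled constants, so that the local error constants $C$ do not blow up as they are carried through the remaining steps. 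Summing, $N\cdot C h^{\beta+1}=C(b-a)h^\beta$, which is (\ref{GlobalOrder}); the initial-condition hypothesis $\left\vert E(g(\mathbf{x}(a)))-E(g(\mathbf{y}(a)))\right\vert\leq C_0 h^\beta$ absorbs the error already present at the starting time.

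The main obstacle, and the step deserving the most care, is the per-step order bookkeeping: showing that the remainder terms in $\mathcal{B}(\Gamma_\beta)$—which individually scale like $h^{\beta+1/2}$—actually combine to yield a local weak error of the full order $h^{\beta+1}$ rather than merely $h^{\beta+1/2}$. This is the familiar gain-of-half-order phenomenon in weak (as opposed to strong) convergence, and it relies on the vanishing of odd moments of the Wiener increments together with the fact that the LL coefficients reproduce the true Ito coefficients exactly on all of $\Gamma_\beta$ (Lemma \ref{LemmaLL}). I would handle this by grouping the $\mathcal{B}(\Gamma_\beta)$ integrals according to parity and invoking the weak-order version of the Kloeden--Platen remainder estimate; the smoothness assumption $g\in\mathcal{C}_P^{2(\beta+1)}$ is exactly what permits the Taylor expansion of $g$ to the order needed for this parity argument to close.
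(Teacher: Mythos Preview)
Your proposal is correct and follows essentially the same strategy as the paper: both use Lemma~\ref{LemmaLL} to identify the Ito coefficients of the WLL approximation with those of the original SDE on $\Gamma_\beta$, Lemma~\ref{LemmaBoundLL} for the needed moment bounds, and then the Kloeden--Platen weak-convergence machinery to pass from local to global error. The only difference is one of packaging: the paper verifies the three hypotheses of Theorem~14.5.2 in \cite{Kloeden 1995} (using Lemma~5.11.7 there for the local $\mathbf{F}_{\mathbf{p}}$-moment estimate and Theorem~4.5.4 for the increment bound) and then invokes that theorem directly, whereas you describe unrolling the telescoping and parity argument that live inside those results---a more self-contained but longer route to the same conclusion.
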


\begin{proof}
For $l=1,2,\ldots,$ let $P_{l}=\{\mathbf{p}\in\{1,\ldots,d\}^{l}\}$,
and let
$\mathbf{F}_{\mathbf{p}}:\mathbb{R}^{d}\rightarrow\mathbb{R}$ be the
function defined as
\begin{equation*}
\mathbf{F}_{\mathbf{p}}(\mathbf{x})=\prod\limits_{i=1}^{l}\mathbf{x}%
^{p_{i}},
\end{equation*}
where $\mathbf{p}=(p_{1},\ldots,p_{l})\in P_{l}$.

By applying Lemma 5.11.7 in \cite{Kloeden 1995} to (\ref{WLLA}) and
taking into account that (\ref{WSDE-LLA-3}) is the order-$\beta $
weak Ito-Taylor
expansion of the solution of (\ref{WLLA}), it is obtained%
\begin{align*}
\left\vert E\left( \mathbf{F}_{\mathbf{p}}(\mathbf{y}_{n+1}-\mathbf{y}_{n})-%
\mathbf{F}_{\mathbf{p}}(\sum\limits_{\alpha \in \Gamma _{\beta
}/\{\nu \}}I_{\alpha }[\Lambda _{\alpha }(\tau
_{n},\mathbf{y}_{n};\tau _{n})]_{\tau _{n},\tau
_{n+1}})\text{{\LARGE \TEXTsymbol{\vert}}}\mathcal{F}_{\tau
_{n}}\right) \right\vert & \leq K(1+\left\vert
\mathbf{y}_{n}\right\vert
^{2r}) \\
& \cdot (\tau _{n+1}-\tau _{n})h_{n}^{\beta },
\end{align*}%
for all $\mathbf{p}\in P_{l}$ and $l=1,\ldots ,2\beta +1$, some $K>0$ and $%
r\in \{1,2,\ldots \}$, where $\Lambda _{\alpha }$ denotes the Ito
coefficient function corresponding to (\ref{WLLA}), and $h_{n}=\tau
_{n+1}-\tau _{n}$. Further, Lemma \ref{LemmaLL} implies that%
\begin{equation*}
E\left( \mathbf{F}_{\mathbf{p}}(\sum\limits_{\alpha \in \Gamma
_{\beta }/\{\nu \}}I_{\alpha }[\lambda _{\alpha }(\tau
_{n},\mathbf{y}_{n})]_{\tau _{n},\tau _{n+1}})\text{{\LARGE
\TEXTsymbol{\vert}}}\mathcal{F}_{\tau _{n}})\right) =E\left(
\mathbf{F}_{\mathbf{p}}(\sum\limits_{\alpha \in
\Gamma _{\beta }/\{\nu \}}I_{\alpha }[\Lambda _{\alpha }(\tau _{n},\mathbf{y}%
_{n};\tau _{n})]_{\tau _{n},\tau _{n+1}})\text{{\LARGE \TEXTsymbol{\vert}}}%
\mathcal{F}_{\tau _{n}}\right) ,
\end{equation*}%
where $\lambda _{\alpha }$ denotes the Ito coefficient function
corresponding to (\ref{SS1}). Hence,
\begin{align*}
\left\vert E\left( \mathbf{F}_{\mathbf{p}}(\mathbf{y}_{n+1}-\mathbf{y}_{n})-%
\mathbf{F}_{\mathbf{p}}(\sum\limits_{\alpha \in \Gamma _{\beta
}/\{\nu \}}I_{\alpha }[\lambda _{\alpha }(\tau
_{n},\mathbf{y}_{n})]_{\tau _{n},\tau _{n+1}})\text{{\LARGE
\TEXTsymbol{\vert}}}\mathcal{F}_{\tau _{n}}\right) \right\vert &
\leq K(1+\left\vert \mathbf{y}_{n}\right\vert ^{2r})(\tau
_{n+1}-\tau _{n})h_{n}^{\beta } \\
& \leq K(1+\underset{0\leq k\leq n}{\max }\left\vert \mathbf{y}%
_{k}\right\vert ^{2r}) \\
& \cdot (\tau _{n+1}-\tau _{n})h_{n}^{\beta }.
\end{align*}%
On the other hand, Theorem 4.5.4 in \cite{Kloeden 1995} applied to (\ref%
{WLLA}) and Lemma \ref{LemmaBoundLL} imply
\begin{equation*}
E\left( \left\vert \mathbf{y}_{n+1}-\mathbf{y}_{n}\right\vert ^{2q}\text{%
{\LARGE \TEXTsymbol{\vert}}}\mathcal{F}_{\tau _{n}}\right) \leq L(1+\underset%
{0\leq k\leq n}{\max }\left\vert \mathbf{y}_{k}\right\vert
^{2q})(\tau _{n+1}-\tau _{n})^{q}
\end{equation*}%
for all $0\leq n\leq N-1$, and
\begin{equation*}
E\left( \underset{0\leq k\leq n_{b}}{\max }\left\vert \mathbf{y}%
_{k}\right\vert ^{2q}\text{{\LARGE \TEXTsymbol{\vert}}}\mathcal{F}%
_{a}\right) \leq C(1+\left\vert \mathbf{y}_{0}\right\vert ^{2q}),
\end{equation*}%
respectively, where $C$ and $L$ are positive constants. The proof
concludes by using Theorem 14.5.2 in \cite{Kloeden 1995} with the
last three inequalities.
\end{proof}

For state equations with additive noise, the order of weak
convergence of the WLL approximations provided by this Theorem
matches with that early obtained in \cite{Carbonell06}.

Theorem \ref{LLF TheoremConvAppLL} provides the global order of weak
convergence for the WLL\ approximations at the time $t=b$. Notice
further that inequality (\ref{GlobalOrder}) implies that the uniform
bound
\begin{equation}
\sup_{t\in \lbrack a,b]}\left\vert E\left(
g(\mathbf{x}(t))\text{{\LARGE
\TEXTsymbol{\vert}}}\mathcal{F}_{a}\right) -E\left(
g(\mathbf{y}(t))\text{{\LARGE
\TEXTsymbol{\vert}}}\mathcal{F}_{a}\right) \right\vert \leq
C(b-a)h^{\beta } \label{UniformOrder}
\end{equation}%
holds as well for the order-$\beta $\ WLL approximation $\mathbf{y}$
since, in general, the global order of weak convergence of a
numerical
integrator implies the uniform one (see Theorem 14.5.1 and Exercise 14.5.3 in \cite%
{Kloeden 1995} for details).

Convergence in Theorem \ref{LLF TheoremConvAppLL} has been proved
under the assumption of continuity for $\mathbf{f}$ and
$\mathbf{g}_{i}$. If that is not the case, the consistency of the
WLL discretization has been proved in \cite{Stramer99}. In other
practical situations, it is important to integrate
SDEs with nonglobal Lipschitz coefficients on $%
\mathbb{R}
^{d}$ \cite{Milshtein 2005}. Typically, for such type of equations,
the conventional weak integrators display explosive values for some
realizations. In such a case, if each numerical realization of an
order-$\beta$ scheme
leaving a sufficient large sphere $\mathcal{R}\subset%
\mathbb{R}
^{d}$ is rejected, then Theorem 2.3 in \cite{Milshtein 2005} ensures
that the accuracy of the scheme is $\varepsilon+O(h^{\beta})$, where
$\varepsilon$ can be made arbitrary small with increasing the sphere
radius.\ This Theorem could be applied to the WLL approximations as
well.

Finally, the rate of convergence of the approximate Local
Linearization\ filter is states as follows.

\begin{theorem}
\label{Conv LL Filter}Given a set of $M$ partial and noisy
observations of the state equation (\ref{SS1}) on $\{t\}_{M}$, and
under the assumption that
conditions (\ref{LLF ComponentsCond})-(\ref{LLF BoundLemma g}) hold on $%
[t_{0},T]$, the approximate order-$\beta $ LL\ filter (\ref{ALLF1})-(\ref%
{ALLF6}) defined on $(\tau )_{h}\supset \{t\}_{M}$ converges with order $%
\beta $ to the exact LMV\ filter (\ref{LMVF1})-(\ref{LMVF5}) as $h$
goes to zero.
\end{theorem}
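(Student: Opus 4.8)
The plan is to recognize the order-$\beta$ Local Linearization filter (\ref{ALLF1})-(\ref{ALLF6}) as a concrete instance of the order-$\beta$ approximate LMV filter of Definition \ref{Approx LMV filter}, obtained by taking the approximating process $\mathbf{y}$ to be the order-$\beta$ WLL approximation (\ref{WLLA}), and then to discharge the two hypotheses (\ref{LMVF6}) and (\ref{LMVF7}) required by Theorem \ref{Theorem CLMVF} using Lemma \ref{LemmaBoundLL} and Theorem \ref{LLF TheoremConvAppLL}. Once both hypotheses are verified, the asserted order-$\beta$ convergence of the LL filter to the exact LMV filter (\ref{LMVF1})-(\ref{LMVF5}) follows at once from Theorem \ref{Theorem CLMVF}.

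First I would justify the identification. On each subinterval the WLL process $\mathbf{y}$ solves the linear SDE (\ref{WLLA}), so its conditional first moment and conditional second-moment matrix obey the standard moment ODEs for linear SDEs of \cite{Arnold 1974}; these are precisely equations (\ref{ALLF1})-(\ref{ALLF3}), which are \emph{exact} for the linear dynamics and hence introduce no error of their own. Consequently the quantities $\mathbf{y}_{t/t_{k}}$, $\mathbf{P}_{t/t_{k}}=E(\mathbf{y}(t)\mathbf{y}^{\intercal}(t)/Z_{t_{k}})$ and $\mathbf{V}_{t/t_{k}}$ produced by the LL filter coincide with the conditional moments appearing in Definition \ref{Approx LMV filter}. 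Since the correction equations (\ref{ALLF4})-(\ref{ALLF6}) are identical to the observation-update equations of the approximate LMV filter, the only source of error is the weak approximation of $\mathbf{x}$ by $\mathbf{y}$, so it suffices to show that this $\mathbf{y}$ meets the two input conditions of Theorem \ref{Theorem CLMVF}.

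To verify the moment bound (\ref{LMVF6}), I would note that, because conditions (\ref{LLF ComponentsCond})-(\ref{LLF BoundLemma g}) are assumed on all of $[t_{0},T]$, Lemma \ref{LemmaBoundLL} applies on each observation subinterval $[t_{k},t_{k+1}]$ and gives $E(\sup_{t_{k}\le t\le t_{k+1}}|\mathbf{y}(t)|^{2q}\mid\mathcal{F}_{t_{k}})\le C(1+|\mathbf{y}(t_{k})|^{2q})$; together with the matching of initial moments at $t_{0}$ and the boundedness of the affine correction step, this yields a uniform constant $L$ as required. To verify the weak-convergence criterion (\ref{LMVF7}), I would apply Theorem \ref{LLF TheoremConvAppLL} on each $[t_{k},t_{k+1}]$ with the discretization $(\tau)_{h}$ restricted there (legitimate since $\{t\}_{M}\subset(\tau)_{h}$), obtaining (\ref{GlobalOrder}) and, through its uniform consequence (\ref{UniformOrder}), the bound $\sup_{t_{k}\le t\le t_{k+1}}|E(g(\mathbf{x}(t))\mid\mathcal{F}_{t_{k}})-E(g(\mathbf{y}(t))\mid\mathcal{F}_{t_{k}})|\le C(t_{k+1}-t_{k})h^{\beta}$, which is exactly (\ref{LMVF7}) with $L_{k}=C(t_{k+1}-t_{k})$.

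The step I expect to require most care is the interplay between these per-subinterval estimates and the recursion across observations: Theorem \ref{LLF TheoremConvAppLL} presupposes that the initial moments of $\mathbf{x}$ and $\mathbf{y}$ at the left endpoint $t_{k}$ agree up to $O(h^{\beta})$, which for $k\ge1$ is not an assumption but must be inherited from the preceding correction. This propagation is precisely what Theorem \ref{Theorem CLMVF} already packages, since its proof carries the $O(h^{\beta})$ error through both the between-observation prediction estimate (\ref{LMVF8}) and the at-observation update estimate (\ref{LMVF9}). Therefore no estimate beyond (\ref{LMVF6})-(\ref{LMVF7}) is needed, and feeding these two verified conditions into Theorem \ref{Theorem CLMVF} completes the proof.
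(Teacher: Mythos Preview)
Your proposal is correct and follows essentially the same approach as the paper's own proof: identify the order-$\beta$ LL filter as the approximate LMV filter of Definition~\ref{Approx LMV filter} built on the WLL approximation (\ref{WLLA}), invoke Lemma~\ref{LemmaBoundLL} on each $[t_{k},t_{k+1}]$ to obtain the moment bound (\ref{LMVF6}), invoke Theorem~\ref{LLF TheoremConvAppLL} and its uniform consequence (\ref{UniformOrder}) to obtain the weak-convergence condition (\ref{LMVF7}), and conclude via Theorem~\ref{Theorem CLMVF}. Your additional explicit discussion of the identification step (via the linear-SDE moment ODEs of \cite{Arnold 1974}) and of the recursive propagation of the initial-condition hypothesis of Theorem~\ref{LLF TheoremConvAppLL} across observation times are sound elaborations that the paper leaves implicit.
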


\begin{proof}
Lemma \ref{LemmaBoundLL} and Theorem \ref{LLF TheoremConvAppLL}
imply that the order-$\beta $ LL approximation $\mathbf{y}$ of
$\mathbf{x}$ defined by (\ref{WLLA}) satisfies the inequalities
(\ref{LLF GeneralBoundLL}) and (\ref{UniformOrder}) for any
integration interval $[a,b]\subset \lbrack t_{0},T]$. Thus, by
applying that lemma and theorem in each interval $[t_{k},t_{k+1}]$
with $\mathbf{y}(t_{k})\equiv \mathbf{y}_{t_{k}/t_{k}}$ (and $\mathbf{y}%
_{t_{0}/t_{0}}\equiv \mathbf{x}_{t_{0}/t_{0}}$), for
all $t_{k},t_{k+1}\in \{t\}_{M}$, the bound and convergence conditions (\ref%
{LMVF6}) and (\ref{LMVF7}) required by Theorem \ref{Theorem CLMVF}
for the convergence of the filter designed from $\mathbf{y}$ are
satisfied. Therefore, the inequalities (\ref{LMVF8})-(\ref{LMVF10})
hold for the approximate LL filter of the Definition
\ref{orderBLLfilter}, and so it has rate of convergence $\beta $
when $h$ goes to zero.
\end{proof}

\section{Practical Algorithms}

This section deals with practical implementation of the
order-$\beta$ LL\ filter (\ref{ALLF1})-(\ref{ALLF6}). Explicit
formulas for the predictions $\mathbf{y}_{t/t_{k}}$ and
$\mathbf{P}_{t/t_{k}}$, an adaptive strategy for the construction of
an adequate time discretization $\left(  \tau\right) _{h}$, and the
resulting adaptive LL filter algorithm are given.

\subsection{Formulas for the predictions\label{Formulas}}

Let us define the vectors $\mathbf{a}_{0}(\tau)$,
$\mathbf{a}_{1}(\tau)$, $\mathbf{b}_{i,0}(\tau)$ and
$\mathbf{b}_{i,1}(\tau)$ satisfying the
expressions%
\[
\mathbf{a}^{\mathbb{\beta}}(t;\tau_{n_{t}})=\mathbf{a}_{0}(\tau_{n_{t}%
})+\mathbf{a}_{1}(\tau_{n_{t}})(t-\tau_{n_{t}})\text{ \ \ \ \ and
\ \ \ \ }\mathbf{b}_{i}^{\mathbb{\beta}}(t;\tau_{n_{t}})=\mathbf{b}_{i,0}%
(\tau_{n_{t}})+\mathbf{b}_{i,1}(\tau_{n_{t}})(t-\tau_{n_{t}})
\]
for all $t\in\lbrack t_{k},t_{k+1}]$, where the vector functions
$\mathbf{a}^{\beta}$ and $\mathbf{b}_{i}^{\beta}$ are defined as in
the WLL approximation (\ref{WLLA}) but, replacing $\mathbf{y}(s)$ by
$\mathbf{y}_{s/t_{k}}.$ By simplicity, the supraindex $\beta$ is
omitted in the right hand side of the above expressions.

According Theorem 3.1 in \cite{Jimenez 2012}, the solution of the
piecewise linear differential
equations (\ref{ALLF1})-(\ref{ALLF2}) for the predictions can be computed as%
\begin{equation}
\mathbf{y}_{t/t_{k}}=\mathbf{y}_{t_{k}/t_{k}}+%
{\displaystyle\sum\limits_{n=n_{t_{k}}}^{n_{t}-1}}
\mathbf{L}_{2}e^{\mathbf{M}(\tau_{n})(\tau_{n+1}-\tau_{n})}\mathbf{u}%
_{\tau_{n},t_{k}}+\mathbf{L}_{2}e^{\mathbf{M}(\tau_{n_{t}})(t-\tau_{n_{t}}%
)}\mathbf{u}_{\tau_{n_{t}},t_{k}} \label{ALLF8}%
\end{equation}
and
\begin{equation}
vec(\mathbf{P}_{t/t_{k}})=\mathbf{L}_{1}e^{\mathbf{M}(\tau_{n_{t}})(t-\tau_{n_{t}}%
)}\mathbf{u}_{\tau_{n_{t}},t_{k}} \label{ALLF9}%
\end{equation}
for all $t\in (t_{k},t_{k+1}]$ and $t_{k},t_{k+1}\in\{t\}_{M}$,
where the vector $\mathbf{u}_{\tau,t_{k}}$ and the matrices
$\mathbf{M}(\tau)$, $\mathbf{L}_{1}$, $\mathbf{L}_{2}$\ are defined
as
\[
\mathbf{M}(\tau)=\left[
\begin{array}
[c]{cccccc}%
\mathcal{A}(\tau) & \mathcal{B}_{5}(\tau) & \mathcal{B}_{4}(\tau) &
\mathcal{B}_{3}(\tau) & \mathcal{B}_{2}(\tau) & \mathcal{B}_{1}(\tau)\\
\mathbf{0} & \mathbf{C}(\tau) & \mathbf{I}_{d+2} & \mathbf{0} &
\mathbf{0} &
\mathbf{0}\\
\mathbf{0} & \mathbf{0} & \mathbf{C}(\tau) & \mathbf{0} & \mathbf{0}
&
\mathbf{0}\\
\mathbf{0} & \mathbf{0} & \mathbf{0} & 0 & 2 & 0\\
\mathbf{0} & \mathbf{0} & \mathbf{0} & 0 & 0 & 1\\
\mathbf{0} & \mathbf{0} & \mathbf{0} & 0 & 0 & 0
\end{array}
\right]  \text{, \ \ }\mathbf{u}_{\tau,t_{k}}=\left[
\begin{array}
[c]{c}%
vec(\mathbf{P}_{\tau/t_{k}})\\
\mathbf{0}\\
\mathbf{r}\\
0\\
0\\
1
\end{array}
\right]  \in%
\mathbb{R}
^{(d^{2}+2d+7)}%
\]
and%
\[
\mathbf{L}_{1}=\left[
\begin{array}
[c]{cc}%
\mathbf{I}_{d^{2}} & \mathbf{0}_{d^{2}\times(2d+7)}%
\end{array}
\right]  \text{, \ \ \ \ \ \ \ \ \ }\mathbf{L}_{2}=\left[
\begin{array}
[c]{ccc}%
\mathbf{0}_{d\times(d^{2}+d+2)} & \mathbf{I}_{d} & \mathbf{0}_{d\times5}%
\end{array}
\right]
\]
in terms of the matrices and vectors
\[
\mathcal{A}(\tau)=\mathbf{A}(\tau)\mathbf{\oplus A}(\tau)+\sum\limits_{i=1}%
^{m}\mathbf{B}_{i}(\tau)\mathbf{\otimes B}_{i}^{\intercal}(\tau),
\]%
\[
\mathbf{C(}\tau)=\left[
\begin{array}
[c]{ccc}%
\mathbf{A}(\tau) & \mathbf{a}_{1}(\tau) & \mathbf{A}(\tau)\mathbf{y}%
_{\tau/t_{k}}+\mathbf{a}_{0}(\tau)\\
0 & 0 & 1\\
0 & 0 & 0
\end{array}
\right]  \in\mathbb{R}^{(d+2)\times(d+2)},
\]%
\[
\mathbf{r}^{\intercal}=\left[
\begin{array}
[c]{ll}%
\mathbf{0}_{1\times(d+1)} & 1
\end{array}
\right]
\]
$\mathcal{B}_{1}(\tau)=vec(\mathbf{\beta}_{1}(\tau))+\beta_{4}(\tau
)\mathbf{y}_{\tau/t_{k}}$, $\mathcal{B}_{2}(\tau)=vec(\mathbf{\beta}_{2}%
(\tau))+\mathbf{\beta}_{5}(\tau)\mathbf{y}_{\tau/t_{k}}$, $\mathcal{B}%
_{3}(\tau)=vec(\mathbf{\beta}_{3}(\tau))$, $\mathcal{B}_{4}(\tau
)=\mathbf{\beta}_{4}(\tau)\mathbf{L}$ and
$\mathcal{B}_{5}(\tau)=\mathbf{\beta }_{5}(\tau)\mathbf{L}$ with
\begin{align*}
\mathbf{\beta}_{1}(\tau)  &  =\sum\limits_{i=1}^{m}\mathbf{b}_{i,0}%
(\tau)\mathbf{b}_{i,0}^{\intercal}(\tau)\\
\mathbf{\beta}_{2}(\tau)  &  =\sum\limits_{i=1}^{m}\mathbf{b}_{i,0}%
(\tau)\mathbf{b}_{i,1}^{\intercal}(\tau)+\mathbf{b}_{i,1}(\tau)\mathbf{b}%
_{i,0}^{\intercal}(\tau)\\
\mathbf{\beta}_{3}(\tau)  &  =\sum\limits_{i=1}^{m}\mathbf{b}_{i,1}%
(\tau)\mathbf{b}_{i,1}^{\intercal}(\tau)\\
\mathbf{\beta}_{4}(\tau)  &  =\mathbf{a}_{0}(\tau)\oplus\mathbf{a}_{0}%
(\tau)+\sum\limits_{i=1}^{m}\mathbf{b}_{i,0}(\tau)\otimes\mathbf{B}_{i}%
(\tau)+\mathbf{B}_{i}(\tau)\otimes\mathbf{b}_{i,0}(\tau)\\
\mathbf{\beta}_{5}(\tau)  &  =\mathbf{a}_{1}(\tau)\oplus\mathbf{a}_{1}%
(\tau)+\sum\limits_{i=1}^{m}\mathbf{b}_{i,1}(\tau)\otimes\mathbf{B}_{i}%
(\tau)+\mathbf{B}_{i}(\tau)\otimes\mathbf{b}_{i,1}(\tau),
\end{align*}
$\mathbf{L}=\left[
\begin{array}
[c]{ll}%
\mathbf{I}_{d} & \mathbf{0}_{d\times2}%
\end{array}
\right]  $, and the $d$-dimensional identity matrix
$\mathbf{I}_{d}$. The matrix functions $\mathbf{A},\mathbf{B}_{i}$
are defined as in the WLL approximation (\ref{WLLA}) but, replacing
$\mathbf{y}(s)$ by $\mathbf{y}_{s/t_{k}}$. The symbols $vec$,
$\oplus$ and $\otimes$ denote the vectorization operator, the
Kronecker sum and product, respectively.

Alternatively, see Theorems 3.2 and 3.3 in \cite{Jimenez 2012} for
simplified formulas in the case autonomous state equations or with
additive noise.

\subsection{Adaptive selection of a time discretization\label{Adpative scheme}}

In order to write a code that automatically determines a suitable
time discretization $\left(  \tau\right)  _{h}$\ for achieving a
prescribed accuracy in the
computation of the predictions $\mathbf{y}_{t_{k+1}/t_{k}}$ and $\mathbf{P}%
_{t_{k+1}/t_{k}}$, an adequate adaptive strategy is necessary. Since
the equations (\ref{ALLF1})-(\ref{ALLF2}) for the first two
conditional moments of $\mathbf{y}$ are ordinary differential
equations, conventional adaptive strategies for numerical
integrators of such class of equations are useful. In what follows,
the adaptive strategy described in \cite{Hairer 1993} is adapted to
the LL filter requirements.

Once the values for the relative and absolute tolerances $rtol_{\mathbf{y}%
},rtol_{\mathbf{P}}$ and $atol_{\mathbf{y}},atol_{\mathbf{P}}$ for
the local errors of the first two conditional moments, for the
maximum and minimum stepsizes $h_{\max}$ and $h_{\min}$, and for the
floating point precision $prs$ are set, an initial stepsize $h_{1}$
needs to be estimated. Specifically,
\[
h_{1}=\max \{h_{\min },\min \{\delta (\mathbf{y}),\delta (vec(\mathbf{P}%
)),t_{1}-t_{0}\}\}
\]%
where
\[
\delta (\mathbf{v})=\min \{100\delta _{1}(\mathbf{v}),\delta _{2}(\mathbf{v}%
)\}
\]%
with
\[
\delta _{1}(\mathbf{v})=\left\{
\begin{array}{cc}
atol_{\mathbf{v}} & \text{if }d_{0}(\mathbf{v})<10\cdot atol_{\mathbf{v}}%
\text{ or }d_{1}(\mathbf{v})<10\cdot atol_{\mathbf{v}} \\
0.01\frac{d_{0}(\mathbf{v})}{d_{1}(\mathbf{v})} & \text{otherwise}%
\end{array}%
\right.
\]%
and%
\[
\delta _{2}(\mathbf{v})=\left\{
\begin{array}{cc}
\max \{atol_{\mathbf{v}},\delta _{1}\cdot rtol_{\mathbf{v}}\}. & \text{if }%
max\{d_{1}(\mathbf{v}),d_{2}(\mathbf{v})\}\leq prs \\
(\dfrac{0.01}{\max
\{d_{1}(\mathbf{v}),d_{2}(\mathbf{v})\}})^{\frac{1}{\beta
+1}} & \text{otherwise}%
\end{array}%
\right. .
\]%
Here, $d_{0}(\mathbf{v})=\left\Vert \mathbf{v}_{t_{0}/t_{0}}\right\Vert $, $%
d_{1}(\mathbf{v})=\left\Vert \mathbf{F}(t_{0},\mathbf{v}_{t_{0}/t_{0}})%
\right\Vert $ and $d_{2}(\mathbf{v})=\left\Vert \dfrac{\partial \mathbf{F}%
(t_{0},\mathbf{v}_{t_{0}/t_{0}})}{\partial t}+\dfrac{\partial \mathbf{F}%
(t_{0},\mathbf{v}_{t_{0}/t_{0}})}{\partial \mathbf{v}}\mathbf{F}(t_{0},%
\mathbf{v}_{t_{0}/t_{0}})\right\Vert $ are the norms of the filters
and of their first two derivatives with respect to $t$ at $t_{0}$,
where $\mathbf{F} $ is the vector field of the equation for
$\mathbf{v}$ (i.e., (\ref{ALLF1})
for $\mathbf{y}$, and (\ref{ALLF2}) for $\mathbf{P}$), and $\Vert \mathbf{v}%
\Vert =\sqrt{\dfrac{1}{\dim (\mathbf{v})}\sum_{i=1}^{\dim (\mathbf{v})}(%
\dfrac{\mathbf{v}^{i}}{\mathbf{sc}^{i}(\mathbf{v})})^{2}}$ with $\mathbf{sc}%
^{i}(\mathbf{v})=atol_{\mathbf{v}}+rtol_{\mathbf{v}}\cdot \left\vert \mathbf{%
v}_{t_{0}/t_{0}}^{i}\right\vert $.

Starting with the filter estimates $\mathbf{y}_{t_{k}/t_{k}}$ and
$\mathbf{P}_{t_{k}/t_{k}}$, the basic steps of the adaptive
algorithm for determining $\left(  \tau\right)  _{h}$ and computing
the predictions $\mathbf{y}_{t_{k+1}/t_{k}}$ and
$\mathbf{P}_{t_{k+1}/t_{k}}$ between two consecutive observations
$t_{k}$ and $t_{k+1}$ are the following:

\begin{enumerate}
\item Computation of $\mathbf{y}_{\tau_{n}/t_{k}}$ and $vec(\mathbf{P}%
_{\tau_{n}/t_{k}})$ at $\tau_{n}=\tau_{n-1}+2h_{n}$ by the recursive
evaluation of the expressions (\ref{ALLF8})-(\ref{ALLF9}) at the two
consecutive times $\tau_{n-1}+h_{n}$ and $(\tau_{n-1}+h_{n})+h_{n}$.
That is,
\[
\mathbf{y}_{\tau_{n}/t_{k}}=\mathbf{y}_{\tau_{n-1}/t_{k}}+\mathbf{L}%
_{2}e^{h_{n}\mathbf{M}(\tau_{n-1})}\mathbf{u}_{\tau_{n-1},t_{k}}%
+\mathbf{L}_{2}e^{h_{n}\mathbf{M}(\tau_{n-1}+h_{n})}\mathbf{u}_{\tau
_{n-1}+h_{n},t_{k}}%
\]
and
\[
vec(\mathbf{P}_{\tau
_{n}/t_{k}})=\mathbf{L}_{1}e^{h_{n}\mathbf{M}(\tau
_{n-1}+h_{n})}\mathbf{u}_{\tau _{n-1}+h_{n},t_{k}}.
\]%

\item Computation of an alternative estimate for the predictions at $\tau
_{n}=\tau_{n-1}+2h_{n}$ by means of the expressions%
\[
\widehat{\mathbf{y}}_{\tau_{n}/t_{k}}=\mathbf{y}_{\tau_{n-1}/t_{k}}%
+\mathbf{L}_{2}e^{2h_{n}\mathbf{M}(\tau_{n-1})}\mathbf{u}_{\tau_{n-1},t_{k}}%
\]
and
\[
vec(\widehat{\mathbf{P}}_{\tau_{n}/t_{k}})=\mathbf{L}_{1}e^{2h_{n}%
\mathbf{M}(\tau_{n-1})}\mathbf{u}_{\tau_{n-1},t_{k}},
\]
which follow from the straightforward evaluation of (\ref{ALLF8}%
)-(\ref{ALLF9}) at $\tau_{n-1}+2h_{n}$.

\item Evaluation of the error formulas
\[
E_{1}=\sqrt{\dfrac{1}{d}\sum_{i=1}^{d}(\dfrac{\mathbf{y}_{\tau_{n}/t_{k}}%
^{i}-\widehat{\mathbf{y}}_{\tau_{n}/t_{k}}^{i}}{\mathbf{sc}^{i}(\mathbf{y}%
)})^{2}}\text{ \ \ \ and \ }E_{2}=\sqrt{\dfrac{1}{d^{2}}\sum_{i=1}^{d^{2}%
}(\dfrac{\mathbf{p}_{\tau_{n}/t_{k}}^{i}-\widehat{\mathbf{p}}_{\tau_{n}/t_{k}%
}^{i}}{\mathbf{sc}^{i}(\mathbf{p})})^{2}},
\]
where $\mathbf{p}_{\tau_{n}/t_{k}}=vec(\mathbf{P}_{\tau_{n}/t_{k}})$
and
$\mathbf{sc}^{i}(\mathbf{v})=atol_{\mathbf{v}}+rtol_{\mathbf{v}}\cdot
\max \{\left\vert \mathbf{v}_{\tau _{n-1}/t_{k}}^{i}\right\vert
,\left\vert \mathbf{v}_{\tau _{n}/t_{k}}^{i}\right\vert \}.$

\item Estimation of a new stepsize%
\[
h_{new}=\max\{h_{\min},\min\{\delta_{new}(E_{1}),\delta_{new}(E_{2})\}\}
\]
where%
\[
\delta_{new}(E)=\left\{
\begin{array}
[c]{cc}%
h_{n}\cdot\min\{5,\max\{0.25,0.8\cdot(\dfrac{1}{E})^{\frac{1}{\beta+1}}\}\}
&
E\leq1\text{ }\\
h_{n}\cdot\min\{1,\max\{0.1,0.2\cdot(\dfrac{1}{E})^{\frac{1}{\beta+1}}\}\}
&
E>1\text{ }%
\end{array}
\right.
\]

\item Validation of $\mathbf{y}_{\tau_{n}/t_{k}}$ and $vec(\mathbf{P}%
_{\tau_{n}/t_{k}})$: if $max\{E_{1},E_{2}\}\leq1$ or
$h_{n}=h_{\min}$, then accept $\mathbf{y}_{\tau_{n}/t_{k}}$ and
$vec(\mathbf{P}_{\tau_{n}/t_{k}})$ as approximations to the first
two conditional moments of $\mathbf{x}$ at
$\tau_{n}=\tau_{n-1}+2h_{n}$. Otherwise, return to step 1 with $h_{n}%
=h_{new}.$

\item Control of the final stepsize: if $\tau_{n}+2h_{n}=t_{k+1}$, stop. If
$\tau_{n}+2h_{n}+h_{new}>t_{k+1}$, then redefine
$h_{new}=t_{k+1}-(\tau _{n}+2h_{n})$.

\item Return to step 1 with $n=n+1$ and $h_{n}=h_{new}$.
\end{enumerate}

Clearly, in this adaptive strategy, the selected values for the
relative and absolute tolerances will have a direct impact in the
filtering performance expressed in terms of the filtering error and
the computational time cost. Note that, under the assumed smoothness
conditions for the first two conditional moments of the state
equation, the adaptive algorithm provides an adequate estimation of
the local errors of the approximate moments at each $\tau _{n}\in
(\tau )_{h}$, and ensures that the relative and absolute errors of
the approximate moments at $\tau _{n}$ are lower than the
prearranged relative and absolute tolerance. This is done with a
computational time cost that typically increases as the values of
the tolerances decreases. Thus, for each filtering problem, adequate
tolerance values should be carefully set in advance. In practical
control engineering, these tolerances can be chosen by taking into
account the level of accuracy required by the particular problem
under consideration and the specific range of values of its state
variables.

Remarks: It is worth to emphasize that the initial stepsize $h_{1}$
is computed just one time for computing the value of
$\tau_{1}\in\lbrack t_{0},t_{1}]$. For other $\tau_{n}\in\lbrack
t_{k},t_{k+1}]$ with $n=n_{t_{k}}+1$ and $k>0$, the initial value
for the corresponding $h_{n}$ is set as $h_{n}=h_{new},$ where the
value $h_{new}$ was estimated when the previous stepsize $h_{n-1}$
was accepted. Further note that, because the flow property of the
exponential operator, only two exponential matrices need to be
evaluated in steps 1 and 2, instead of three. These two exponential
matrices can the efficiently computed through the well known
Pad\'{e} method for exponential matrices \cite{Moler03} or,
alternatively, by means of the Krylov subspace method \cite{Moler03}
in the case of high dimensional state equation. Even more, low order
Pad\'{e} and Krylov methods as suggested in \cite{Jimenez 2012 BIT}
can be used as well for reducing the computation cost, but
preserving the order-$\beta$ of the LL filters. In step 4, the
constant values in the formula for the new stepsize $\delta
_{new}(E)$ were set according to the standard integration criteria
oriented to reach an adequate balance of accuracy and computational
cost with the adaptive strategy (see, e.g., \cite{Hairer 1993}).
These values might be adjusted for improving the filtering
performance in some specific types of state equations.

\subsection{Adaptive LL filter algorithm}

Starting with the initial filter values $\mathbf{y}_{t_{0}/t_{0}}=\mathbf{x}%
_{t_{0}/t_{0}}$ and
$\mathbf{P}_{t_{0}/t_{0}}=\mathbf{Q}_{t_{0}/t_{0}}$, the adaptive LL
filter algorithm performs the recursive computation of:

\begin{enumerate}
\item the predictions $\mathbf{y}_{\tau _{n}/t_{k}}$ and $\mathbf{P}_{\tau
_{n}/t_{k}}$ for all $\tau _{n}\in \{\left( \tau \right) _{h}$ $\cap $ $%
(t_{k},t_{k+1}]\}$ by means of the recursive formulas and the
adaptive strategy of the last two subsections, and the prediction
variance by
\begin{equation*}
\mathbf{V}_{t_{k+1}/t_{k}}=\mathbf{P}_{t_{k+1}/t_{k}}-\mathbf{y}%
_{t_{k+1}/t_{k}}\mathbf{y}_{t_{k+1}/t_{k}}^{\intercal };
\end{equation*}

\item the filters
\begin{align*}
\mathbf{y}_{t_{k+1}/t_{k+1}}& =\mathbf{y}_{t_{k+1}/t_{k}}+\mathbf{K}%
_{t_{k+1}}\mathbf{(\mathbf{z}}_{t_{k+1}}-\mathbf{\mathbf{C}y}_{t_{k+1}/t_{k}}%
\mathbf{)}, \\
\mathbf{V}_{t_{k+1}/t_{k+1}}& =\mathbf{V}_{t_{k+1}/t_{k}}-\mathbf{K}%
_{t_{k+1}}\mathbf{CV}_{t_{k+1}/t_{k}}, \\
\mathbf{P}_{t_{k+1}/t_{k+1}}& =\mathbf{V}_{t_{k+1}/t_{k+1}}+\mathbf{y}%
_{t_{k+1}/t_{k+1}}\mathbf{y}_{t_{k+1}/t_{k+1}}^{\intercal },
\end{align*}%
with filter gain%
\begin{equation*}
\mathbf{K}_{t_{k+1}}=\mathbf{V}_{t_{k+1}/t_{k}}\mathbf{C}^{\intercal }%
{\Large (}\mathbf{CV}_{t_{k+1}/t_{k}}\mathbf{C}^{\intercal }+\Sigma
_{t_{k+1}})^{-1};
\end{equation*}
\end{enumerate}

\noindent for each $k$, with $k=0,1,\ldots ,M-2$. \indent

\section{Numerical Simulations}

In this section, the performance of the approximate LMV filters
introduced in this paper is illustrated, by means of simulations,
with four examples of state space models. To do so, the prediction
and filter values are computed in four different ways by means of:
1) the exact LMV filter formulas, when it is possible; 2) the
conventional LL filter; when the exact filter formulas are
available; 3) the order-$1$ LL filter with various uniform time
discretizations; and 4) the adaptive order-$1$ LL filter. For each
example, the error analysis for the estimated moments and the
estimation of the weak convergence rate are carried out through the
standard procedures (see, e.g., \cite{Kloeden 1995,Carbonell06}).

The state space models to be considered are the followings.

\textbf{Example 1.} State equation with multiplicative noise%
\begin{equation}
dx=atxdt+\sigma\sqrt{t}xdw_{1} \label{SE EJ1}%
\end{equation}
and observation equation
\begin{equation}
z_{t_{k}}=x(t_{k})+e_{t_{k}},\text{ for }k=0,1,..,M-1 \label{OE EJ1}%
\end{equation}
with $a=-0.1$, $\sigma=0.1$, $t_{0}=0.5$, $\Sigma=0.0001$,
$x_{t_{0}/t_{0}}=1$ and $Q_{t_{0}/t_{0}}=1$. For this state
equation, the predictions for the first two moments are
\[
x_{t_{k+1}/t_{k}}=x_{t_{k}/t_{k}}e^{a(t_{k+1}^{2}-t_{k}^{2})/2}\text{ \ and\ \ \ \ }%
Q_{t_{k+1}/t_{k}}=Q_{t_{k}/t_{k}}e^{(a+\sigma^{2}/2)(t_{k+1}^{2}-t_{k}^{2})},
\]
where the filters $x_{t_{k}/t_{k}}$ and $Q_{t_{k}/t_{k}}$ are
obtained from (\ref{LMVF3}) and (\ref{LMVF4}) for all
$k=0,1,..,M-2$.

\textbf{Example 2.} State equation with two additive noise%
\begin{equation}
dx=atxdt+\sigma_{1}t^{p}e^{at^{2}/2}dw_{1}+\sigma_{2}\sqrt{t}dw_{2}%
\label{SE EJ2}%
\end{equation}
and observation equation
\begin{equation}
z_{t_{k}}=x(t_{k})+e_{t_{k}},\text{ for }k=0,1,..,M-1\label{OE EJ2}%
\end{equation}
with $a=-0.25$, $p=2$, $\sigma_{1}=5,$ $\sigma_{2}=0.1$,
$t_{0}=0.01$, $\Sigma=0.0001$, $x_{t_{0}/t_{0}}=10$ and
$Q_{t_{0}/t_{0}}=100$. For this state equation, the predictions for the first two moments are%
\[
x_{t_{k+1}/t_{k}}=x_{t_{k}/t_{k}}e^{a(t_{k+1}^{2}-t_{k}^{2})/2}\text{ }%
\]
and
\[
Q_{t_{k+1}/t_{k}}=(Q_{t_{k}/t_{k}}+\frac{\sigma_{2}^{2}}{2a})e^{a(t_{k+1}^{2}%
-t_{k}^{2})}+\frac{\sigma_{1}^{2}}{2p+1}(t_{k+1}^{2p+1}-t_{k}^{2p+1})e^{at_{k+1}^{2}%
}-\frac{\sigma_{2}^{2}}{2a},
\]
where the filters $x_{t_{k}/t_{k}}$ and $Q_{t_{k}/t_{k}}$ are
obtained from (\ref{LMVF3}) and (\ref{LMVF4}) for all
$k=0,1,..,M-2$.

\textbf{Example 3.} Van der Pool oscillator with random input
\cite{Gitterman05}%
\begin{align}
dx_{1}  &  =x_{2}dt\label{SEa EJ3}\\
dx_{2}  &  =(-(x_{1}^{2}-1)x_{2}-x_{1}+a)dt+\sigma dw \label{SEb EJ3}%
\end{align}
and observation equation
\begin{equation}
z_{t_{k}}=x_{1}(t_{k})+e_{t_{k}},\text{ for }k=0,1,..,M-1, \label{OE EJ3}%
\end{equation}
where $a=0.5$ and $\sigma^{2}=(0.75)^{2}$ are the intensity and the
variance of the random input, respectively. In addition, $t_{0}=0$,
$\Sigma=0.001$, $\mathbf{x}_{t_{0}/t_{0}}^{\intercal}=[1$ $1]$ and
$\mathbf{Q}_{t_{0}/t_{0}}=\mathbf{x}_{t_{0}/t_{0}}
 \mathbf{x}_{t_{0}/t_{0}}^{\intercal}$.

\textbf{Example 4.} Van der Pool oscillator with random frequency
\cite{Gitterman05}%
\begin{align}
dx_{1}  &  =x_{2}dt\label{SEa EJ4}\\
dx_{2}  &  =(-(x_{1}^{2}-1)x_{2}-\varpi x_{1})dt+\sigma x_{1}dw
\label{SEb EJ4}%
\end{align}
and observation equation
\begin{equation}
z_{t_{k}}=x_{1}(t_{k})+e_{t_{k}},\text{ for }k=0,1,..,M-1, \label{OE EJ4}%
\end{equation}
where $\varpi=1$ and $\sigma^{2}=1$ are the frequency mean value and
variance,
respectively. In addition, $t_{0}=0$, $\Sigma=0.001$, $\mathbf{x}_{t_{0}%
/t_{0}}^{\intercal}=[1$ $1]$ and
$\mathbf{Q}_{t_{0}/t_{0}}=\mathbf{x}_{t_{0}/t_{0}}
 \mathbf{x}_{t_{0}/t_{0}}^{\intercal}$.

For each example, $2000$ realizations of the state equation solution
were computed by means of the Euler \cite{Kloeden 1995} or the Local
Linearization scheme \cite{Jimenez 2012 BIT} for the equations with
multiplicative or additive noise, respectively. For each example,
the realizations were computed over the thin time partition
$\{t_{0}+n\delta :\delta=10^{-4},n=0,..,9\times10^{4}\}$ for
guarantee a precise simulation of the stochastic solutions on the
time interval $[t_{0},t_{0}+9]$. A subsample of each realization at
the time instants $\{t\}_{M=10}=\{t_{k}=t_{0}+k:$ $k=0,..,M-1\}$ was
taken to evaluate the corresponding observation equation. In this
way, $2000$ time series $\{z_{t_{k}}^{i}\}_{k=0,..,M-1}$, with
$i=1,..2000$, of 10 values each one were finally available for every
state space example.

For each time series of the first two examples, the values of the
exact LMV filter, the conventional LL\ filter on $\{t\}_{M}$, the
order-$1$ LL filter on uniform time discretization $\left(
\tau\right) _{h}^{u}=\{\tau_{n}=t_{0}+nh:$
$n=0,..,(M-1)/h\}\supset\{t\}_{M}$ with $h=1/64,1/128,1/256,1/512$,
and the adaptive order-$1$ LL filter were computed.

For each time series $\{z_{t_{k}}^{i}\}_{k=0,..,M-1}$, four type of
errors were evaluated: the errors$\ \left\vert
\mathbf{x}_{t_{k+1}/t_{k+1}}^{i}\right.  $ $\left.
-\mathbf{y}_{t_{k+1}/t_{k+1}}^{i}\right\vert $ and $\left\vert
\mathbf{U}_{t_{k+1}/t_{k+1}}^{i}-\mathbf{V}_{t_{k+1}/t_{k+1}}^{i}\right\vert
$ between each approximate filter and the exact one, and the errors
$\left\vert
\mathbf{x}_{t_{k+1}/t_{k}}^{i}-\mathbf{y}_{t_{k+1}/t_{k}}^{i}\right\vert
$ and
$\left\vert \mathbf{U}_{t_{k+1}/t_{k}}^{i}-\mathbf{V}_{t_{k+1}/t_{k}}%
^{i}\right\vert $ between the predictions, for all $k=0,..,M-2$.%

\begin{table}[tbp] \centering
\caption{{\small Confidence limits for the errors between the exact
LMV filter
$\mathbf{x}_{t_{k+1}/t_{k+1}},\mathbf{U}_{t_{k+1}/t_{k+1}}$ of
(\ref{SE EJ1})-(\ref{OE EJ1}) and the order-$1$ LL filter
$\mathbf{y}_{t_{k+1}/t_{k+1}}^{h},\mathbf{V}_{t_{k+1}/t_{k+1}}^{h}$
on $\left(  \tau\right)  _{h}^{u}$ with different value of $h$.
Order $\widehat{\beta}$ of weak convergence estimated from the
errors.}}
$%
\resizebox{\textwidth}{!}{
\begin{tabular}
[c]{|l|l|l|l|l|l|}\hline $\mathbf{y}_{t_{k+1}/t_{k+1}}^{h}$ &
$h=1/64$ & $h=1/128$ & $h=1/256$ & $h=1/512$ &
$\widehat{\beta}$\\\hline $t_{1}/t_{1}$ & $1.36\pm0.03\times10^{-5}$
& $6.73\pm0.13\times10^{-6}$ & $3.35\pm0.06\times10^{-6}$ &
$1.67\pm0.03\times10^{-6}$ & $1.00$\\\hline $t_{2}/t_{2}$ &
$5.35\pm0.11\times10^{-6}$ & $2.66\pm0.06\times10^{-6}$ &
$1.33\pm0.03\times10^{-6}$ & $6.64\pm0.14\times10^{-7}$ &
$1.00$\\\hline $t_{3}/t_{3}$ & $3.65\pm0.06\times10^{-6}$ &
$1.82\pm0.03\times10^{-6}$ & $9.09\pm0.16\times10^{-7}$ &
$4.54\pm0.08\times10^{-7}$ & $1.00$\\\hline $t_{4}/t_{4}$ &
$3.32\pm0.10\times10^{-6}$ & $1.66\pm0.05\times10^{-6}$ &
$8.28\pm0.25\times10^{-7}$ & $4.14\pm0.12\times10^{-7}$ &
$1.00$\\\hline $t_{5}/t_{4}$ & $3.54\pm0.09\times10^{-6}$ &
$1.77\pm0.04\times10^{-6}$ & $8.82\pm0.22\times10^{-7}$ &
$4.41\pm0.11\times10^{-7}$ & $1.00$\\\hline $t_{6}/t_{6}$ &
$3.98\pm0.09\times10^{-6}$ & $1.98\pm0.05\times10^{-6}$ &
$9.91\pm0.23\times10^{-7}$ & $4.95\pm0.12\times10^{-7}$ &
$1.00$\\\hline $t_{7}/t_{7}$ & $3.42\pm0.11\times10^{-6}$ &
$1.71\pm0.05\times10^{-6}$ & $8.52\pm0.26\times10^{-7}$ &
$4.26\pm0.13\times10^{-7}$ & $1.00$\\\hline $t_{8}/t_{8}$ &
$2.00\pm0.05\times10^{-6}$ & $9.96\pm0.26\times10^{-7}$ &
$4.98\pm0.13\times10^{-7}$ & $2.49\pm0.06\times10^{-7}$ &
$1.01$\\\hline $t_{9}/t_{9}$ & $8.34\pm0.33\times10^{-7}$ &
$4.17\pm0.16\times10^{-7}$ & $2.09\pm0.08\times10^{-7}$ &
$1.05\pm0.04\times10^{-7}$ & $1.01$\\\hline
$\mathbf{V}_{t_{k+1}/t_{k+1}}^{h}$ & $h=1/64$ & $h=1/128$ &
$h=1/256$ & $h=1/512$ & $\widehat{\beta}$\\\hline $t_{1}/t_{1}$ &
$2.47\pm0.06\times10^{-5}$ & $1.23\pm0.03\times10^{-5}$ &
$6.12\pm0.14\times10^{-6}$ & $3.05\pm0.07\times10^{-6}$ &
$1.01$\\\hline $t_{2}/t_{2}$ & $8.08\pm0.20\times10^{-6}$ &
$4.03\pm0.10\times10^{-6}$ & $2.01\pm0.05\times10^{-6}$ &
$1.00\pm0.03\times10^{-6}$ & $1.00$\\\hline $t_{3}/t_{3}$ &
$4.06\pm0.11\times10^{-6}$ & $2.03\pm0.06\times10^{-6}$ &
$1.01\pm0.03\times10^{-6}$ & $5.05\pm0.14\times10^{-7}$ &
$1.00$\\\hline $t_{4}/t_{4}$ & $2.36\pm0.07\times10^{-6}$ &
$1.18\pm0.03\times10^{-6}$ & $5.87\pm0.17\times10^{-7}$ &
$2.93\pm0.08\times10^{-7}$ & $1.00$\\\hline $t_{5}/t_{4}$ &
$1.52\pm0.05\times10^{-6}$ & $7.60\pm0.24\times10^{-7}$ &
$3.78\pm0.12\times10^{-7}$ & $1.89\pm0.06\times10^{-7}$ &
$1.00$\\\hline $t_{6}/t_{6}$ & $9.36\pm0.30\times10^{-7}$ &
$4.66\pm0.15\times10^{-7}$ & $2.33\pm0.07\times10^{-7}$ &
$1.16\pm0.04\times10^{-7}$ & $1.00$\\\hline $t_{7}/t_{7}$ &
$4.49\pm0.22\times10^{-7}$ & $2.23\pm0.10\times10^{-7}$ &
$1.11\pm0.05\times10^{-7}$ & $5.55\pm0.27\times10^{-8}$ &
$1.00$\\\hline $t_{8}/t_{8}$ & $1.32\pm0.07\times10^{-7}$ &
$6.55\pm0.35\times10^{-8}$ & $3.26\pm0.17\times10^{-8}$ &
$1.63\pm0.09\times10^{-8}$ & $1.01$\\\hline $t_{9}/t_{9}$ &
$2.42\pm0.19\times10^{-8}$ & $1.19\pm0.09\times10^{-8}$ &
$5.94\pm0.46\times10^{-9}$ & $2.96\pm0.23\times10^{-9}$ &
$1.01$\\\hline
\end{tabular}}
$%
\label{Table I ALLF}%
\end{table}%

The $2000$ errors of each type were arranged into $L=20$ batches
with $K=100$ values each one, which are denoted by
$\widehat{e}_{l,j},$ $l=1,..,L;$ $j=1,...,K.$ Then, the sample mean
of the $l$-$th$ batch and of all batches can be computed by
\[
\widehat{e}_{l}=\frac{1}{K}\sum\limits_{j=1}^{K}\widehat{e}_{l,j},\text{
and }\widehat{e}=\frac{1}{L}\sum\limits_{l=1}^{L}\widehat{e}_{l},
\]
respectively. The confidence interval for each type of error is
computed as
\[
\lbrack\widehat{e}-\Delta,\widehat{e}+\Delta],
\]
where
\[
\Delta=t_{1-\alpha/2,L-1}\sqrt{\frac{\widehat{\sigma}_{e}^{2}}{L}},\text{
}\widehat{\sigma}_{e}^{2}=\frac{1}{L-1}\sum\limits_{i=1}^{L}\left\vert
\widehat{e}_{i}-\widehat{e}\right\vert ^{2},
\]
and $t_{1-\alpha/2,L-1}$ denotes the $1-\alpha/2$ percentile of the
Student's $t$ distribution with $L-1$ degrees for the significance
level $0<\alpha<1.$ The 90\% confidence interval (i.e., the values
$\Delta$ for $\alpha=0.1$) was chosen.

\bigskip%
\begin{table}[tbp] \centering
\caption{{\small Confidence limits for the errors between the exact
LMV predictions
$\mathbf{x}_{t_{k+1}/t_{k}},\mathbf{U}_{t_{k+1}/t_{k}}$of (\ref{SE
EJ1})-(\ref{OE EJ1}) and their approximations
$\mathbf{y}_{t_{k+1}/t_{k}}^{h},\mathbf{V}_{t_{k+1}/t_{k}}^{h}$
obtained by the order-$1$ LL filter on $\left(  \tau\right)
_{h}^{u}$ with different value of $h$. Order $\widehat{\beta}$ of
weak convergence estimated from the errors.} }
$%
\resizebox{\textwidth}{!}{
\begin{tabular}
[c]{|l|l|l|l|l|l|}\hline $\mathbf{y}_{t_{k+1}/t_{k}}^{h}$ & $h=1/64$
& $h=1/128$ & $h=1/256$ & $h=1/512$ & $\widehat{\beta}$\\\hline
$t_{1}/t_{0}$ & $7.35\pm0.00\times10^{-7}$ &
$1.84\pm0.00\times10^{-7}$ & $4.60\pm0.00\times10^{-8}$ &
$1.15\pm0.00\times10^{-8}$ & $2.00$\\\hline $t_{2}/t_{1}$ &
$1.11\pm0.02\times10^{-5}$ & $5.52\pm0.10\times10^{-6}$ &
$2.74\pm0.05\times10^{-6}$ & $1.37\pm0.03\times10^{-6}$ &
$1.01$\\\hline $t_{3}/t_{2}$ & $4.22\pm0.09\times10^{-6}$ &
$2.02\pm0.04\times10^{-6}$ & $9.95\pm0.21\times10^{-7}$ &
$4.94\pm0.10\times10^{-7}$ & $1.03$\\\hline $t_{4}/t_{3}$ &
$2.75\pm0.05\times10^{-6}$ & $1.27\pm0.02\times10^{-6}$ &
$6.20\pm0.11\times10^{-7}$ & $3.07\pm0.05\times10^{-7}$ &
$1.05$\\\hline $t_{5}/t_{4}$ & $2.20\pm0.06\times10^{-6}$ &
$1.03\pm0.03\times10^{-6}$ & $5.07\pm0.15\times10^{-7}$ &
$2.52\pm0.08\times10^{-7}$ & $1.04$\\\hline $t_{6}/t_{5}$ &
$2.06\pm0.05\times10^{-6}$ & $9.88\pm0.26\times10^{-7}$ &
$4.88\pm0.12\times10^{-7}$ & $2.43\pm0.06\times10^{-7}$ &
$1.03$\\\hline $t_{7}/t_{6}$ & $2.02\pm0.05\times10^{-6}$ &
$9.93\pm0.24\times10^{-7}$ & $4.94\pm0.12\times10^{-7}$ &
$2.46\pm0.06\times10^{-7}$ & $1.01$\\\hline $t_{8}/t_{7}$ &
$1.57\pm0.05\times10^{-6}$ & $7.74\pm0.24\times10^{-7}$ &
$3.84\pm0.12\times10^{-7}$ & $1.92\pm0.06\times10^{-7}$ &
$1.01$\\\hline $t_{9}/t_{8}$ & $8.18\pm0.22\times10^{-7}$ &
$4.06\pm0.11\times10^{-7}$ & $2.03\pm0.05\times10^{-7}$ &
$1.01\pm0.03\times10^{-7}$ & $1.00$\\\hline
$\mathbf{V}_{t_{k+1}/t_{k}}^{h}$ & $h=1/64$ & $h=1/128$ & $h=1/256$
& $h=1/512$ & $\widehat{\beta}$\\\hline $t_{1}/t_{0}$ &
$1.22\pm0.00\times10^{-4}$ & $6.14\pm0.00\times10^{-5}$ &
$3.08\pm0.00\times10^{-5}$ & $1.54\pm0.00\times10^{-5}$ &
$1.01$\\\hline $t_{2}/t_{1}$ & $7.61\pm0.02\times10^{-5}$ &
$3.85\pm0.00\times10^{-5}$ & $1.94\pm0.00\times10^{-5}$ &
$9.71\pm0.02\times10^{-6}$ & $1.00$\\\hline $t_{3}/t_{2}$ &
$4.05\pm0.04\times10^{-5}$ & $2.06\pm0.02\times10^{-5}$ &
$1.04\pm0.00\times10^{-5}$ & $5.22\pm0.05\times10^{-6}$ &
$1.00$\\\hline $t_{4}/t_{3}$ & $1.77\pm0.04\times10^{-5}$ &
$9.06\pm0.17\times10^{-6}$ & $4.59\pm0.09\times10^{-6}$ &
$2.31\pm0.04\times10^{-6}$ & $1.00$\\\hline $t_{5}/t_{4}$ &
$6.10\pm0.14\times10^{-6}$ & $3.16\pm0.07\times10^{-6}$ &
$1.61\pm0.04\times10^{-6}$ & $8.09\pm0.18\times10^{-7}$ &
$1.00$\\\hline $t_{6}/t_{5}$ & $1.68\pm0.05\times10^{-6}$ &
$8.81\pm0.23\times10^{-7}$ & $4.51\pm0.12\times10^{-7}$ &
$2.28\pm0.06\times10^{-7}$ & $1.00$\\\hline $t_{7}/t_{6}$ &
$3.79\pm0.11\times10^{-7}$ & $1.99\pm0.06\times10^{-7}$ &
$1.02\pm0.03\times10^{-7}$ & $5.17\pm0.15\times10^{-8}$ &
$1.00$\\\hline $t_{8}/t_{7}$ & $8.01\pm0.34\times10^{-8}$ &
$4.14\pm0.18\times10^{-8}$ & $2.10\pm0.09\times10^{-8}$ &
$1.06\pm0.04\times10^{-8}$ & $1.00$\\\hline $t_{9}/t_{8}$ &
$1.58\pm0.07\times10^{-8}$ & $7.83\pm0.35\times10^{-9}$ &
$3.90\pm0.17\times10^{-9}$ & $1.95\pm0.09\times10^{-9}$ &
$1.00$\\\hline
\end{tabular}}
$%
\label{Table II ALLF}%
\end{table}%

\subsection{Results for Example 1}

Tables \ref{Table I ALLF}-\ref{Table III ALLF} show the estimated
errors for the state space model (\ref{SE EJ1})-(\ref{OE EJ1}).
Specifically, Table \ref{Table I ALLF} shows the confidence limits
for the errors between the exact LMV filter
$\mathbf{x}_{t_{k+1}/t_{k+1}},\mathbf{U}_{t_{k+1}/t_{k+1}}$
and the order-$1$ LL filter $\mathbf{y}_{t_{k+1}/t_{k+1}},\mathbf{V}%
_{t_{k+1}/t_{k+1}}$ on the time discretization $\left(  \tau\right)
_{h}^{u}$, with $h=1/64,1/128,1/256,1/512$. Table \ref{Table II
ALLF} shows the confidence limits for the errors between the exact
LMV predictions
$\mathbf{x}_{t_{k+1}/t_{k}},\mathbf{U}_{t_{k+1}/t_{k}}$ and their
approximations
$\mathbf{y}_{t_{k+1}/t_{k}},\mathbf{V}_{t_{k+1}/t_{k}}$ obtained by
the order-$1$ LL filter on $\left(  \tau\right)  _{h}^{u}$. Table
\ref{Table III ALLF} shows the confidence limits for the errors
between the moments of the exact LMV filter and their respective
approximations obtained by the conventional LL filter and the
adaptive LL filter. The average of accepted and fail steps of the
adaptive LL filter at each $t_{k}\in\{t\}_{M}$ is given in Figure
\ref{Fig1}. The absolute and relative tolerances for the first and
second moments were set as
$rtol_{\mathbf{y}}=rtol_{\mathbf{P}}=5\times
10^{-9}$ and $atol_{\mathbf{y}}=5\times10^{-9}$, $atol_{\mathbf{P}}%
=5\times10^{-12}$. Note as the accuracy of the LL filter on uniform
discretizations $\left(  \tau\right)  _{h}^{u}$ improve as $h$
decreases, and the large difference among the accuracy of the
conventional and the adaptive LL filter.

For each approximate conditional moment, the estimated order
$\widehat{\beta}$ of weak convergence were obtained as the slope of
the straight line fitted to
the set of four points $\{\log_{2}(h_{j})$, $\log_{2}(\widehat{e}(h_{j}%
))\}_{j=1,..,4}$ taken from their corresponding errors tables
\ref{Table I ALLF} and \ref{Table II ALLF}. The values
$\widehat{\beta}$ are shown in these tables as well. The estimates
$\widehat{\beta}\approx1$ corroborate the theoretical value for
$\beta$ given in Theorem \ref{Conv LL Filter}. The estimate
$\widehat{\beta}=2.00$ corresponding to
$\mathbf{y}_{t_{1}/t_{0}}^{h}$ in Table \ref{Table II ALLF} agrees
with the
expected estimate of $\beta$ for the equation (\ref{SE EJ1}) on $[t_{0,}%
t_{1}]$. In this particular situation, the exact prediction $\mathbf{x}%
_{t_{1}/t_{0}}$ given by (\ref{LMVF1}) reduces to an ordinary
differential equation and the LL prediction formula (\ref{ALLF1})
reduces to the classical order-$2$ LL integrator for such class of
equations (see, e.g., \cite{Jimenez02 AMC}). In the others
subintervals $[t_{k,}t_{k+1}]$ with $k\neq0$, the prediction
$\mathbf{y}_{t_{k+1}/t_{k}}^{h}$ depends nonlinearly
of $\mathbf{y}$ through the initial value $\mathbf{y}_{t_{k+1}/t_{k+1}}^{h}$.%

\begin{table}[tbp] \centering
\caption{{\small Confidence limits for the errors between the exact
LMV filter and predictions of (\ref{SE EJ1})-(\ref{OE EJ1}) with
their corresponding approximations obtained by the conventional LL
filter and the adaptive LL filter, which are denoted with
superscripts $0$ and $A$, respectively.}}
$%
\begin{tabular}
[c]{|l|l|l|l|l|}\hline $k$ & $\mathbf{y}_{t_{k+1}/t_{k}}^{0}$ &
$\mathbf{y}_{t_{k+1}/t_{k}}^{A}$ & $\mathbf{V}_{t_{k+1}/t_{k}}^{0}$
& $\mathbf{V}_{t_{k+1}/t_{k}}^{A}$\\\hline $0$ &
$2.79\pm0.00\times10^{-3}$ & $5.09\pm0.00\times10^{-10}$ &
$1.75\pm0.00\times10^{-3}$ & $3.23\pm0.00\times10^{-6}$\\\hline $1$
& $5.62\pm0.13\times10^{-3}$ & $2.86\pm0.05\times10^{-7}$ & $5.42\pm
0.15\times10^{-3}$ & $2.09\pm0.00\times10^{-6}$\\\hline $2$ &
$6.09\pm0.05\times10^{-3}$ & $1.06\pm0.02\times10^{-8}$ & $4.04\pm
0.07\times10^{-3}$ & $1.16\pm0.01\times10^{-6}$\\\hline $3$ &
$5.74\pm0.06\times10^{-3}$ & $6.75\pm0.12\times10^{-8}$ & $3.16\pm
0.07\times10^{-3}$ & $5.29\pm0.10\times10^{-7}$\\\hline $4$ &
$4.54\pm0.05\times10^{-3}$ & $5.73\pm0.17\times10^{-8}$ & $1.70\pm
0.04\times10^{-3}$ & $1.92\pm0.04\times10^{-7}$\\\hline $5$ &
$3.17\pm0.04\times10^{-3}$ & $5.72\pm0.15\times10^{-8}$ & $7.21\pm
0.20\times10^{-4}$ & $5.62\pm0.15\times10^{-8}$\\\hline $6$ &
$2.01\pm0.03\times10^{-3}$ & $6.07\pm0.14\times10^{-8}$ & $2.44\pm
0.07\times10^{-4}$ & $1.34\pm0.04\times10^{-8}$\\\hline $7$ &
$1.24\pm0.02\times10^{-3}$ & $5.02\pm0.15\times10^{-8}$ & $7.37\pm
0.27\times10^{-5}$ & $2.85\pm0.12\times10^{-9}$\\\hline $8$ &
$7.32\pm0.14\times10^{-4}$ & $2.82\pm0.07\times10^{-8}$ & $1.81\pm
0.09\times10^{-5}$ & $5.39\pm0.23\times10^{-10}$\\\hline $k$ &
$\mathbf{y}_{t_{k+1}/t_{k+1}}^{0}$ &
$\mathbf{y}_{t_{k+1}/t_{k+1}}^{A}$
& $\mathbf{V}_{t_{k+1}/t_{k+1}}^{0}$ & $\mathbf{V}_{t_{k+1}/t_{k+1}}^{A}%
$\\\hline $0$ & $3.94\pm0.08\times10^{-3}$ &
$3.50\pm0.07\times10^{-7}$ & $7.22\pm 0.17\times10^{-3}$ &
$6.38\pm0.14\times10^{-7}$\\\hline $1$ & $6.25\pm0.13\times10^{-4}$
& $1.43\pm0.03\times10^{-7}$ & $9.61\pm 0.25\times10^{-4}$ &
$2.16\pm0.05\times10^{-7}$\\\hline $2$ & $3.58\pm0.07\times10^{-4}$
& $1.01\pm0.02\times10^{-7}$ & $4.12\pm 0.12\times10^{-4}$ &
$1.12\pm0.03\times10^{-7}$\\\hline $3$ & $3.09\pm0.09\times10^{-4}$
& $9.44\pm0.29\times10^{-8}$ & $2.33\pm 0.07\times10^{-4}$ &
$6.69\pm0.19\times10^{-8}$\\\hline $4$ & $3.50\pm0.09\times10^{-4}$
& $1.04\pm0.03\times10^{-7}$ & $1.64\pm 0.06\times10^{-4}$ &
$4.45\pm0.14\times10^{-8}$\\\hline $5$ & $4.49\pm0.12\times10^{-4}$
& $1.22\pm0.03\times10^{-7}$ & $1.16\pm 0.04\times10^{-4}$ &
$2.86\pm0.09\times10^{-8}$\\\hline $6$ & $5.93\pm0.10\times10^{-4}$
& $1.12\pm0.03\times10^{-7}$ & $7.96\pm 0.28\times10^{-5}$ &
$1.45\pm0.07\times10^{-8}$\\\hline $7$ & $6.61\pm0.12\times10^{-4}$
& $6.93\pm0.18\times10^{-8}$ & $3.92\pm 0.17\times10^{-5}$ &
$4.50\pm0.23\times10^{-9}$\\\hline $8$ & $5.91\pm0.09\times10^{-4}$
& $2.89\pm0.11\times10^{-8}$ & $1.48\pm 0.06\times10^{-5}$ &
$8.17\pm0.63\times10^{-10}$\\\hline
\end{tabular}
$%
\label{Table III ALLF}%
\end{table}%

\subsection{Results for Example 2}

Tables \ref{Table IV ALLF}-\ref{Table VI ALLF} show the estimated
errors for the state space model (\ref{SE EJ2})-(\ref{OE EJ2}). In
particular, Table \ref{Table IV ALLF} shows the confidence limits
for the errors between the exact LMV filter
$\mathbf{x}_{t_{k+1}/t_{k+1}},\mathbf{U}_{t_{k+1}/t_{k+1}}$
and the order-$1$ LL filter $\mathbf{y}_{t_{k+1}/t_{k+1}},\mathbf{V}%
_{t_{k+1}/t_{k+1}}$ on the time discretization $\left(  \tau\right)
_{h}^{u}$, with $h=1/64,1/128,1/256,1/512$. Table \ref{Table V ALLF}
shows the confidence
limits for the errors between the exact LMV predictions $\mathbf{x}%
_{t_{k+1}/t_{k}}$, $\mathbf{U}_{t_{k+1}/t_{k}}$ and their
approximations $\mathbf{y}_{t_{k+1}/t_{k}}$,
$\mathbf{V}_{t_{k+1}/t_{k}}$ obtained by the order-$1$ LL filter on
$\left(  \tau\right)  _{h}^{u}$. Table \ref{Table VI ALLF} shows the
confidence limits for the errors between the moments of the exact
LMV filter and their respective approximations obtained by the
conventional LL filter and the adaptive LL filter. The average of
accepted and fail steps of the adaptive LL filter at each
$t_{k}\in\{t\}_{M}$ is given in Figure \ref{Fig1}. The absolute and
relative tolerances for the first and
second moments for this filter were set as $rtol_{\mathbf{y}}=rtol_{\mathbf{P}%
}=5\times10^{-8}$ and $atol_{\mathbf{y}}=5\times10^{-8}$, $atol_{\mathbf{P}%
}=5\times10^{-11}$. Note as the accuracy of the LL filter on uniform
discretizations $\left(  \tau\right)  _{h}^{u}$ improve as $h$
decreases, and the large difference among the accuracy of the
conventional and the adaptive LL filter.

For each approximate conditional moment, the estimated order
$\widehat{\beta}$ of weak convergence were obtained as the slope of
the straight line fitted to the set of four points $\left\{
\log_{2}(h_{j}),\right.  $ \  $\left.  \log
_{2}(\widehat{e}(h_{j}))\right\}  _{j=1,..,4}$ taken from their
corresponding errors tables \ref{Table IV ALLF} and \ref{Table V
ALLF}. The values $\widehat{\beta}$ are included in these tables
too. The estimates $\widehat{\beta}\approx1$ corroborate the
theoretical value for $\beta$ given in Theorem \ref{Conv LL Filter}.
The estimate $\widehat{\beta}\approx2.00$ corresponding to
$\mathbf{y}_{t_{k+1}/t_{k}}^{h}$ in Table \ref{Table V ALLF} agrees
with the expected estimate of $\beta$ for the equation (\ref{SE
EJ2}) on $[t_{k,}t_{k+1}]$, for all $k$. Similarly to the previous
example, the exact prediction $\mathbf{x}_{t_{k+1}/t_{k}}$ given by
(\ref{LMVF1}) reduces to an ordinary differential equation and the
LL prediction formula (\ref{ALLF1}) reduces as well to the classical
order-$2$ LL integrator for all
$k$. Contrary to the first example, in this one, the prediction $\mathbf{y}%
_{t_{k+1}/t_{k}}^{h}$ with $k\neq0$ does not depend of $\mathbf{y}$
through the initial value $\mathbf{y}_{t_{k+1}/t_{k+1}}^{h}$ and so
the estimate $\widehat{\beta}\approx2.00$ is preserved.

\bigskip\bigskip%
\begin{table}[tbp] \centering
\caption{{\small Confidence limits for the errors between the exact
LMV filter
$\mathbf{x}_{t_{k+1}/t_{k+1}},\mathbf{U}_{t_{k+1}/t_{k+1}}$ of
(\ref{SE EJ2})-(\ref{OE EJ2}) and the order-$1$ LL filter
$\mathbf{y}_{t_{k+1}/t_{k+1}}^{h},\mathbf{V}_{t_{k+1}/t_{k+1}}^{h}$
on $\left(  \tau\right)  _{h}^{u}$ with different value of $h$.
Order $\widehat{\beta}$ of weak convergence estimated from the
errors.} }
$%
\resizebox{\textwidth}{!}{
\begin{tabular}
[c]{|l|l|l|l|l|l|}\hline $\mathbf{y}_{t_{k+1}/t_{k+1}}^{h}$ &
$h=1/64$ & $h=1/128$ & $h=1/256$ & $h=1/512$ &
$\widehat{\beta}$\\\hline $t_{1}/t_{1}$ & $2.00\pm0.04\times10^{-8}$
& $1.17\pm0.02\times10^{-8}$ & $6.23\pm0.11\times10^{-9}$ &
$3.22\pm0.06\times10^{-9}$ & $0.95$\\\hline $t_{2}/t_{2}$ &
$1.31\pm0.03\times10^{-8}$ & $6.44\pm0.14\times10^{-8}$ &
$3.20\pm0.07\times10^{-9}$ & $1.59\pm0.04\times10^{-9}$ &
$1.02$\\\hline $t_{3}/t_{3}$ & $1.12\pm0.03\times10^{-8}$ &
$5.52\pm0.14\times10^{-8}$ & $2.74\pm0.06\times10^{-9}$ &
$1.36\pm0.03\times10^{-9}$ & $1.02$\\\hline $t_{4}/t_{4}$ &
$1.56\pm0.03\times10^{-8}$ & $7.74\pm0.12\times10^{-8}$ &
$3.85\pm0.06\times10^{-9}$ & $1.92\pm0.03\times10^{-9}$ &
$1.01$\\\hline $t_{5}/t_{4}$ & $2.95\pm0.07\times10^{-8}$ &
$1.47\pm0.03\times10^{-8}$ & $7.38\pm0.17\times10^{-9}$ &
$3.69\pm0.08\times10^{-9}$ & $1.01$\\\hline $t_{6}/t_{6}$ &
$7.85\pm0.19\times10^{-8}$ & $3.98\pm0.09\times10^{-8}$ &
$2.01\pm0.05\times10^{-8}$ & $1.01\pm0.02\times10^{-8}$ &
$0.99$\\\hline $t_{7}/t_{7}$ & $2.65\pm0.06\times10^{-7}$ &
$1.37\pm0.03\times10^{-7}$ & $6.94\pm0.15\times10^{-8}$ &
$3.50\pm0.07\times10^{-8}$ & $0.99$\\\hline $t_{8}/t_{8}$ &
$5.46\pm0.16\times107$ & $2.79\pm0.08\times10^{-7}$ &
$1.41\pm0.04\times10^{-7}$ & $7.09\pm0.21\times10^{-8}$ &
$0.99$\\\hline $t_{9}/t_{9}$ & $4.76\pm0.13\times10^{-7}$ &
$2.37\pm0.06\times10^{-7}$ & $1.18\pm0.03\times10^{-7}$ &
$5.91\pm0.16\times10^{-8}$ & $1.01$\\\hline
$\mathbf{V}_{t_{k+1}/t_{k+1}}^{h}$ & $h=1/64$ & $h=1/128$ &
$h=1/256$ & $h=1/512$ & $\widehat{\beta}$\\\hline $t_{1}/t_{1}$ &
$3.48\pm0.09\times10^{-7}$ & $2.03\pm0.05\times10^{-7}$ &
$1.09\pm0.03\times10^{-7}$ & $5.60\pm0.14\times10^{-8}$ &
$0.88$\\\hline $t_{2}/t_{2}$ & $2.66\pm0.11\times10^{-7}$ &
$1.31\pm0.05\times10^{-7}$ & $6.51\pm0.26\times10^{-8}$ &
$3.24\pm0.13\times10^{-8}$ & $1.01$\\\hline $t_{3}/t_{3}$ &
$2.97\pm0.12\times10^{-7}$ & $1.46\pm0.06\times10^{-7}$ &
$7.24\pm0.30\times10^{-8}$ & $3.61\pm0.15\times10^{-8}$ &
$1.01$\\\hline $t_{4}/t_{4}$ & $3.46\pm0.11\times10^{-7}$ &
$1.71\pm0.05\times10^{-7}$ & $8.53\pm0.27\times10^{-8}$ &
$4.26\pm0.13\times10^{-8}$ & $1.01$\\\hline $t_{5}/t_{4}$ &
$3.44\pm0.16\times10^{-7}$ & $1.73\pm0.08\times10^{-7}$ &
$8.65\pm0.41\times10^{-8}$ & $4.33\pm0.21\times10^{-8}$ &
$1.01$\\\hline $t_{6}/t_{6}$ & $3.58\pm0.15\times10^{-7}$ &
$1.83\pm0.07\times10^{-7}$ & $9.21\pm0.38\times10^{-8}$ &
$4.63\pm0.19\times10^{-8}$ & $0.98$\\\hline $t_{7}/t_{7}$ &
$3.57\pm0.14\times10^{-7}$ & $1.85\pm0.07\times10^{-7}$ &
$9.42\pm0.38\times10^{-8}$ & $4.75\pm0.19\times10^{-8}$ &
$0.97$\\\hline $t_{8}/t_{8}$ & $2.35\pm0.13\times10^{-7}$ &
$1.21\pm0.07\times10^{-7}$ & $6.11\pm0.34\times10^{-8}$ &
$3.08\pm0.17\times10^{-8}$ & $0.98$\\\hline $t_{9}/t_{9}$ &
$1.67\pm0.09\times10^{-7}$ & $8.31\pm0.04\times10^{-8}$ &
$4.15\pm0.22\times10^{-8}$ & $2.07\pm0.11\times10^{-8}$ &
$1.00$\\\hline
\end{tabular}}
$%
\label{Table IV ALLF}%
\end{table}%
%

\begin{table}[tbp] \centering
\caption{{\small Confidence limits for the errors between the exact
LMV predictions
$\mathbf{x}_{t_{k+1}/t_{k}},\mathbf{U}_{t_{k+1}/t_{k}}$of (\ref{SE
EJ2})-(\ref{OE EJ2}) and their approximations
$\mathbf{y}_{t_{k+1}/t_{k}}^{h},\mathbf{V}_{t_{k+1}/t_{k}}^{h}$
obtained by the order-$1$ LL filter on $\left(  \tau\right)
_{h}^{u}$ with different value of $h$. Order $\widehat{\beta}$ of
weak convergence estimated from the errors.} }
$%
\resizebox{\textwidth}{!}{
\begin{tabular}
[c]{|l|l|l|l|l|l|}\hline $\mathbf{y}_{t_{k+1}/t_{k}}^{h}$ & $h=1/64$
& $h=1/128$ & $h=1/256$ & $h=1/512$ & $\widehat{\beta}$\\\hline
$t_{1}/t_{0}$ & $2.28\pm0.00\times10^{-5}$ &
$5.70\pm0.00\times10^{-6}$ & $1.43\pm0.00\times10^{-6}$ &
$3.57\pm0.00\times10^{-7}$ & $2.00$\\\hline $t_{2}/t_{1}$ &
$4.63\pm0.03\times10^{-5}$ & $1.16\pm0.00\times10^{-5}$ &
$2.89\pm0.02\times10^{-6}$ & $7.22\pm0.05\times10^{-7}$ &
$2.00$\\\hline $t_{3}/t_{2}$ & $5.44\pm0.11\times10^{-5}$ &
$1.36\pm0.03\times10^{-5}$ & $3.39\pm0.07\times10^{-6}$ &
$8.47\pm0.17\times10^{-7}$ & $2.00$\\\hline $t_{4}/t_{3}$ &
$6.91\pm0.13\times10^{-5}$ & $1.72\pm0.03\times10^{-5}$ &
$4.30\pm0.08\times10^{-6}$ & $1.07\pm0.02\times10^{-6}$ &
$2.00$\\\hline $t_{5}/t_{4}$ & $5.85\pm0.12\times10^{-5}$ &
$1.46\pm0.03\times10^{-5}$ & $3.64\pm0.08\times10^{-6}$ &
$9.09\pm0.19\times10^{-7}$ & $2.00$\\\hline $t_{6}/t_{5}$ &
$3.10\pm0.08\times10^{-5}$ & $7.73\pm0.21\times10^{-6}$ &
$1.93\pm0.05\times10^{-6}$ & $4.81\pm0.13\times10^{-7}$ &
$2.00$\\\hline $t_{7}/t_{6}$ & $1.13\pm0.03\times10^{-5}$ &
$2.82\pm0.06\times10^{-6}$ & $7.01\pm0.16\times10^{-7}$ &
$1.74\pm0.04\times10^{-7}$ & $2.01$\\\hline $t_{8}/t_{7}$ &
$3.07\pm0.07\times10^{-6}$ & $7.56\pm0.18\times10^{-7}$ &
$1.84\pm0.04\times10^{-7}$ & $4.40\pm0.11\times10^{-8}$ &
$2.04$\\\hline $t_{9}/t_{8}$ & $7.63\pm0.24\times10^{-7}$ &
$1.75\pm0.05\times10^{-7}$ & $3.73\pm0.11\times10^{-8}$ &
$6.97\pm0.16\times10^{-9}$ & $2.25$\\\hline
$\mathbf{V}_{t_{k+1}/t_{k}}^{h}$ & $h=1/64$ & $h=1/128$ & $h=1/256$
& $h=1/512$ & $\widehat{\beta}$\\\hline $t_{1}/t_{0}$ &
$2.43\pm0.00\times10^{-3}$ & $1.28\pm0.00\times10^{-3}$ &
$6.56\pm0.00\times10^{-4}$ & $3.32\pm0.00\times10^{-4}$ &
$0.88$\\\hline $t_{2}/t_{1}$ & $7.22\pm0.00\times10^{-2}$ &
$3.54\pm0.00\times10^{-2}$ & $1.75\pm0.00\times10^{-2}$ &
$8.73\pm0.00\times10^{-3}$ & $1.01$\\\hline $t_{3}/t_{2}$ &
$1.69\pm0.00\times10^{-1}$ & $8.29\pm0.00\times10^{-2}$ &
$4.11\pm0.00\times10^{-2}$ & $2.04\pm0.00\times10^{-2}$ &
$1.01$\\\hline $t_{4}/t_{3}$ & $1.16\pm0.00\times10^{-1}$ &
$5.73\pm0.00\times10^{-2}$ & $2.84\pm0.00\times10^{-2}$ &
$1.42\pm0.00\times10^{-2}$ & $1.01$\\\hline $t_{5}/t_{4}$ &
$3.38\pm0.00\times10^{-2}$ & $1.68\pm0.00\times10^{-2}$ &
$8.36\pm0.00\times10^{-3}$ & $4.17\pm0.00\times10^{-3}$ &
$1.00$\\\hline $t_{6}/t_{5}$ & $4.81\pm0.00\times10^{-3}$ &
$2.41\pm0.00\times10^{-3}$ & $1.21\pm0.00\times10^{-3}$ &
$6.05\pm0.00\times10^{-4}$ & $0.99$\\\hline $t_{7}/t_{6}$ &
$3.77\pm0.00\times10^{-4}$ & $1.91\pm0.00\times10^{-4}$ &
$9.62\pm0.00\times10^{-5}$ & $4.83\pm0.00\times10^{-5}$ &
$0.97$\\\hline $t_{8}/t_{7}$ & $3.27\pm0.00\times10^{-5}$ &
$1.65\pm0.00\times10^{-5}$ & $8.28\pm0.00\times10^{-6}$ &
$4.15\pm0.00\times10^{-6}$ & $0.98$\\\hline $t_{9}/t_{8}$ &
$1.70\pm0.00\times10^{-5}$ & $8.44\pm0.00\times10^{-6}$ &
$4.21\pm0.00\times10^{-6}$ & $2.10\pm0.00\times10^{-6}$ &
$1.00$\\\hline
\end{tabular}}
$%
\label{Table V ALLF}%
\end{table}%
%

\begin{table}[tbp] \centering
\caption{{\small Confidence limits for the errors between the exact
LMV filter and predictions of (\ref{SE EJ2})-(\ref{OE EJ2}) with
their corresponding approximations obtained by the conventional LL
filter and the adaptive LL filter, which are denoted with
superscripts $0$ and $A$, respectively.}}
$%
\begin{tabular}
[c]{|l|l|l|l|l|}\hline $k$ & $\mathbf{y}_{t_{k+1}/t_{k}}^{0}$ &
$\mathbf{y}_{t_{k+1}/t_{k}}^{A}$ & $\mathbf{V}_{t_{k+1}/t_{k}}^{0}$
& $\mathbf{V}_{t_{k+1}/t_{k}}^{A}$\\\hline $0$ &
$7.69\pm0.00\times10^{-2}$ & $2.17\pm0.00\times10^{-6}$ &
$2.63\pm0.00$ & $3.72\pm0.00\times10^{-4}$\\\hline $1$ &
$2.09\pm0.01\times10^{-1}$ & $2.14\pm0.04\times10^{-7}$ &
$8.01\pm0.03$ & $1.85\pm0.00\times10^{-3}$\\\hline $2$ &
$2.81\pm0.06\times10^{-1}$ & $8.41\pm0.38\times10^{-8}$ & $4.93\pm
0.13\times10^{2}$ & $3.24\pm0.02\times10^{-3}$\\\hline $3$ &
$4.02\pm0.07\times10^{-1}$ & $1.26\pm0.07\times10^{-7}$ & $3.22\pm
0.17\times10^{2}$ & $2.33\pm0.02\times10^{-3}$\\\hline $4$ &
$3.82\pm0.08\times10^{-1}$ & $1.55\pm0.08\times10^{-7}$ & $6.18\pm
0.10\times10^{1}$ & $7.45\pm0.06\times10^{-4}$\\\hline $5$ &
$2.27\pm0.06\times10^{-1}$ & $1.06\pm0.06\times10^{-7}$ & $6.23\pm
0.25\times10^{-1}$ & $1.15\pm0.01\times10^{-4}$\\\hline $6$ &
$9.36\pm0.21\times10^{-2}$ & $4.68\pm0.22\times10^{-8}$ & $8.34\pm
0.25\times10^{-2}$ & $9.74\pm0.11\times10^{-6}$\\\hline $7$ &
$2.89\pm0.07\times10^{-2}$ & $1.23\pm0.08\times10^{-8}$ & $8.19\pm
0.28\times10^{-3}$ & $7.43\pm0.08\times10^{-7}$\\\hline $8$ &
$8.63\pm0.28\times10^{-3}$ & $1.10\pm0.06\times10^{-9}$ & $2.73\pm
0.02\times10^{-3}$ & $2.86\pm0.00\times10^{-7}$\\\hline $k$ &
$\mathbf{y}_{t_{k+1}/t_{k+1}}^{0}$ &
$\mathbf{y}_{t_{k+1}/t_{k+1}}^{A}$
& $\mathbf{V}_{t_{k+1}/t_{k+1}}^{0}$ & $\mathbf{V}_{t_{k+1}/t_{k+1}}^{A}%
$\\\hline $0$ & $1.75\pm0.03\times10^{-3}$ &
$3.29\pm0.06\times10^{-9}$ & $3.08\pm 0.08\times10^{-2}$ &
$5.73\pm0.14\times10^{-8}$\\\hline $1$ & $9.47\pm0.24\times10^{-7}$
& $3.37\pm0.07\times10^{-10}$ & $1.62\pm0.07\times10^{-5}$ &
$6.84\pm0.26\times10^{-9}$\\\hline $2$ & $2.17\pm0.06\times10^{-6}$
& $2.16\pm0.05\times10^{-10}$ & $5.58\pm0.23\times10^{-5}$ &
$5.75\pm0.25\times10^{-9}$\\\hline $3$ & $2.73\pm0.04\times10^{-6}$
& $3.15\pm0.05\times10^{-10}$ & $5.70\pm0.17\times10^{-5}$ &
$7.10\pm0.26\times10^{-9}$\\\hline $4$ & $3.02\pm0.05\times10^{-6}$
& $6.58\pm0.14\times10^{-10}$ & $2.79\pm0.11\times10^{-5}$ &
$8.01\pm0.42\times10^{-9}$\\\hline $5$ & $1.35\pm0.03\times10^{-5}$
& $1.94\pm0.06\times10^{-9}$ & $7.37\pm 0.28\times10^{-5}$ &
$9.32\pm0.44\times10^{-9}$\\\hline $6$ & $1.16\pm0.14\times10^{-3}$
& $7.07\pm0.20\times10^{-9}$ & $1.89\pm 0.09\times10^{-4}$ &
$1.04\pm0.06\times10^{-8}$\\\hline $7$ & $1.01\pm0.03\times10^{-3}$
& $1.26\pm0.05\times10^{-8}$ & $3.18\pm 0.16\times10^{-5}$ &
$5.56\pm0.36\times10^{-9}$\\\hline $8$ & $6.91\pm0.18\times10^{-5}$
& $8.05\pm0.21\times10^{-9}$ & $2.01\pm 0.12\times10^{-5}$ &
$2.83\pm0.16\times10^{-9}$\\\hline
\end{tabular}
$%
\label{Table VI ALLF}%
\end{table}%

\subsection{Results for Examples 3 and 4}%

\begin{table}[tbp] \centering
\caption{{\small Estimate order of convergence $\widehat{\beta}$ for
the moments of the order-$1$ LL filter applied to the state space
models (\ref{SEa EJ3})-(\ref{OE EJ3}) and (\ref{SEa EJ4})-(\ref{OE
EJ4}) corresponding to the Van der Pool oscillator with additive
(Add) and multiplicative (Mul) noise, respectively.} }
$%
\resizebox{\textwidth}{!}{
\begin{tabular}
[c]{|l|l|l|l|l|l|l|l|l|l|}\hline
$k\backslash Add$ & $\mathbf{y}_{t_{k+1}/t_{k}}$ & $\mathbf{V}_{t_{k+1}/t_{k}%
}$ & $\mathbf{y}_{t_{k+1}/t_{k+1}}$ & $\mathbf{V}_{t_{k+1}/t_{k+1}}$
&
$k\backslash Mul$ & $\mathbf{y}_{t_{k+1}/t_{k}}$ & $\mathbf{V}_{t_{k+1}/t_{k}%
}$ & $\mathbf{y}_{t_{k+1}/t_{k+1}}$ &
$\mathbf{V}_{t_{k+1}/t_{k+1}}$\\\hline $0$ & $1.10$ & $1.04$ &
$1.11$ & $1.04$ & $0$ & $1.08$ & $1.01$ & $1.03$ & $1.01$\\\hline
$1$ & $1.04$ & $1.05$ & $1.04$ & $1.05$ & $1$ & $1.03$ & $1.03$ &
$1.04$ & $1.03$\\\hline $2$ & $1.03$ & $1.03$ & $1.03$ & $1.03$ &
$2$ & $1.03$ & $1.04$ & $1.05$ & $1.04$\\\hline $3$ & $1.02$ &
$1.02$ & $1.03$ & $1.02$ & $3$ & $1.03$ & $1.03$ & $1.04$ &
$1.02$\\\hline $4$ & $1.01$ & $1.01$ & $1.01$ & $0.97$ & $4$ &
$1.02$ & $1.02$ & $1.02$ & $1.02$\\\hline $5$ & $1.01$ & $1.03$ &
$1.01$ & $1.01$ & $5$ & $1.02$ & $0.94$ & $1.02$ & $0.83$\\\hline
$6$ & $1.02$ & $1.01$ & $1.01$ & $0.98$ & $6$ & $1.01$ & $0.98$ &
$1.01$ & $0.97$\\\hline $7$ & $1.02$ & $1.04$ & $1.02$ & $1.06$ &
$7$ & $0.97$ & $1.00$ & $1.02$ & $0.99$\\\hline $8$ & $1.03$ &
$1.02$ & $1.02$ & $1.02$ & $8$ & $1.02$ & $1.01$ & $1.03$ &
$0.99$\\\hline
\end{tabular}}
$%
\label{Table VII ALLF}%
\end{table}%
\normalsize

Since explicit formulas of the LMV filter for the state space models
(\ref{SEa EJ3})-(\ref{OE EJ3}) and (\ref{SEa EJ4})-(\ref{OE EJ4})
are not available, the error analysis of the previous examples
should be adjusted. In this situation, by taking into account the
results of the previous examples, the moments estimated by the
adaptive LL filter with small tolerance can be used as a precise
estimation for the moments of the exact LMV filter. By doing this,
the confidence interval for the errors can similarly be computed as
before for estimate the order $\widehat{\beta}$ of weak convergence
of the order-$1$ LL filter. Table \ref{Table VII ALLF} shows the
estimated order $\widehat {\beta}$ of weak convergence obtained, as
explained above, as the slope of the straight line fitted to the set
of four points $\left\{  \log_{2}(h_{j}),\right.  $ \  $\left.
\log_{2}(\widehat{e}(h_{j}))\right\}  _{j=1,..,4}$, where
$\widehat{e}(h_{j})$ denotes the error between the order-$1$ LL
filter on $\left( \tau\right)  _{h_{j}}^{u}$, with
$h_{j}=1/2^{5+j}$, and the adaptive LL filter with small tolerance.
The tolerances for the adaptive filter were set as
$rtol_{\mathbf{y}}=rtol_{\mathbf{P}}=5\times10^{-8}$ and $atol_{\mathbf{y}%
}=5\times10^{-8}$, $atol_{\mathbf{P}}=5\times10^{-11}$ in the model
(\ref{SEa EJ3})-(\ref{OE EJ3}), and as $rtol_{\mathbf{y}}=rtol_{\mathbf{P}%
}=10^{-7}$ and $atol_{\mathbf{y}}=10^{-7}$, $atol_{\mathbf{P}%
}=10^{-10}$ in the model (\ref{SEa EJ4})-(\ref{OE EJ4}). For each
model, the average of accepted and fail steps of the adaptive LL
filter at each $t_{k}\in\{t\}_{M}$ is given in Figure \ref{Fig1}.
Notice that, for both examples, the estimates
$\widehat{\beta}\approx1$ corroborate the theoretical value for
$\beta$ stated in Theorem \ref{Conv LL Filter}.

\begin{figure}[h]
\centering $%
\begin{array}
[c]{c}%
\includegraphics[width=3.2in]{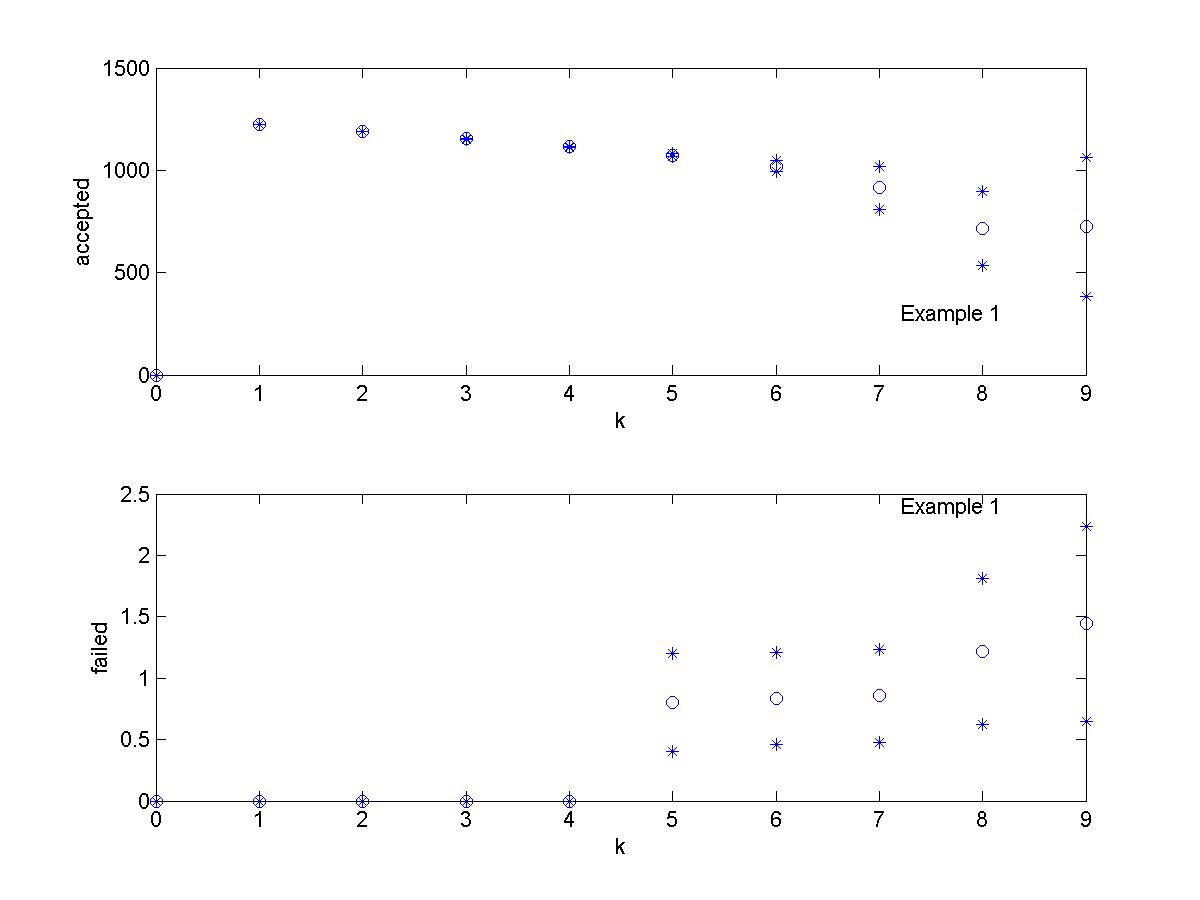}
\includegraphics[width=3.2in]{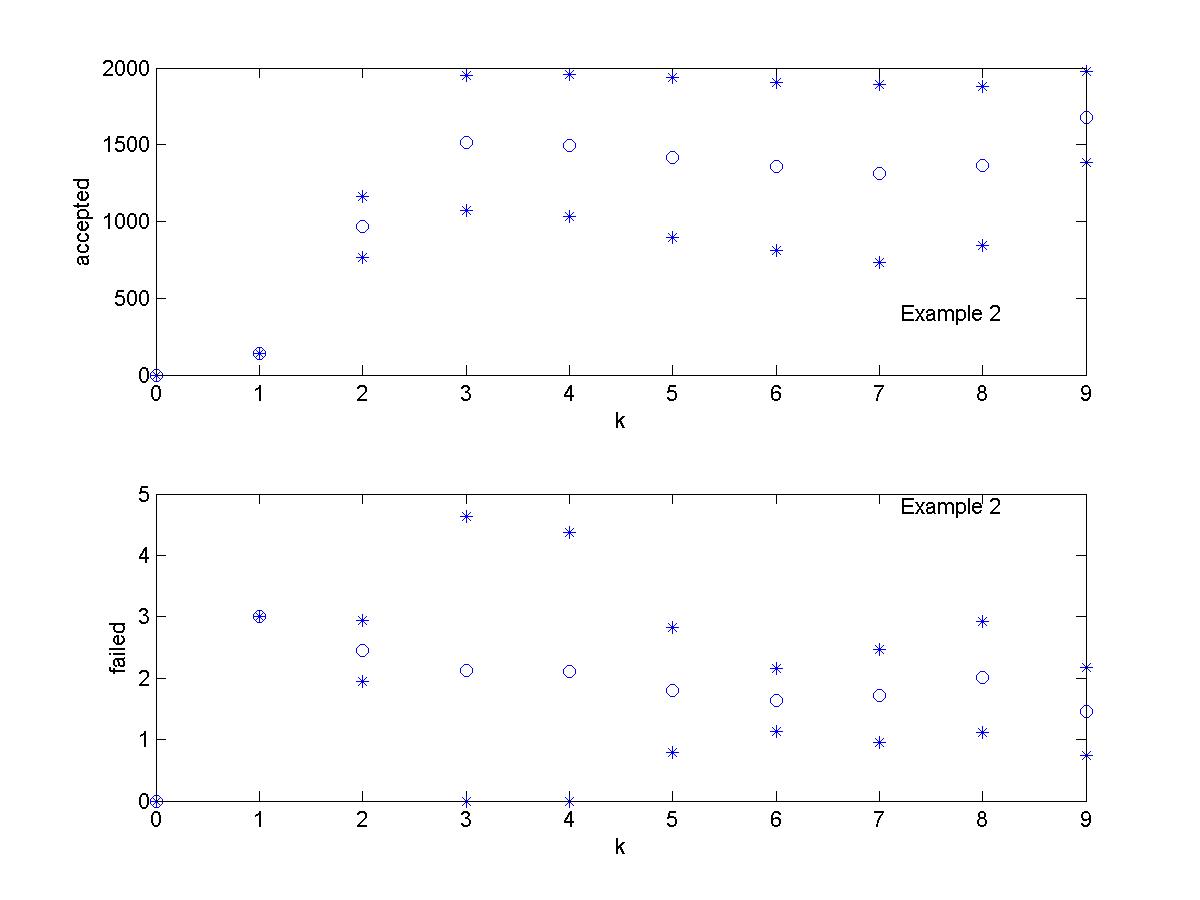}\\
\includegraphics[width=3.2in]{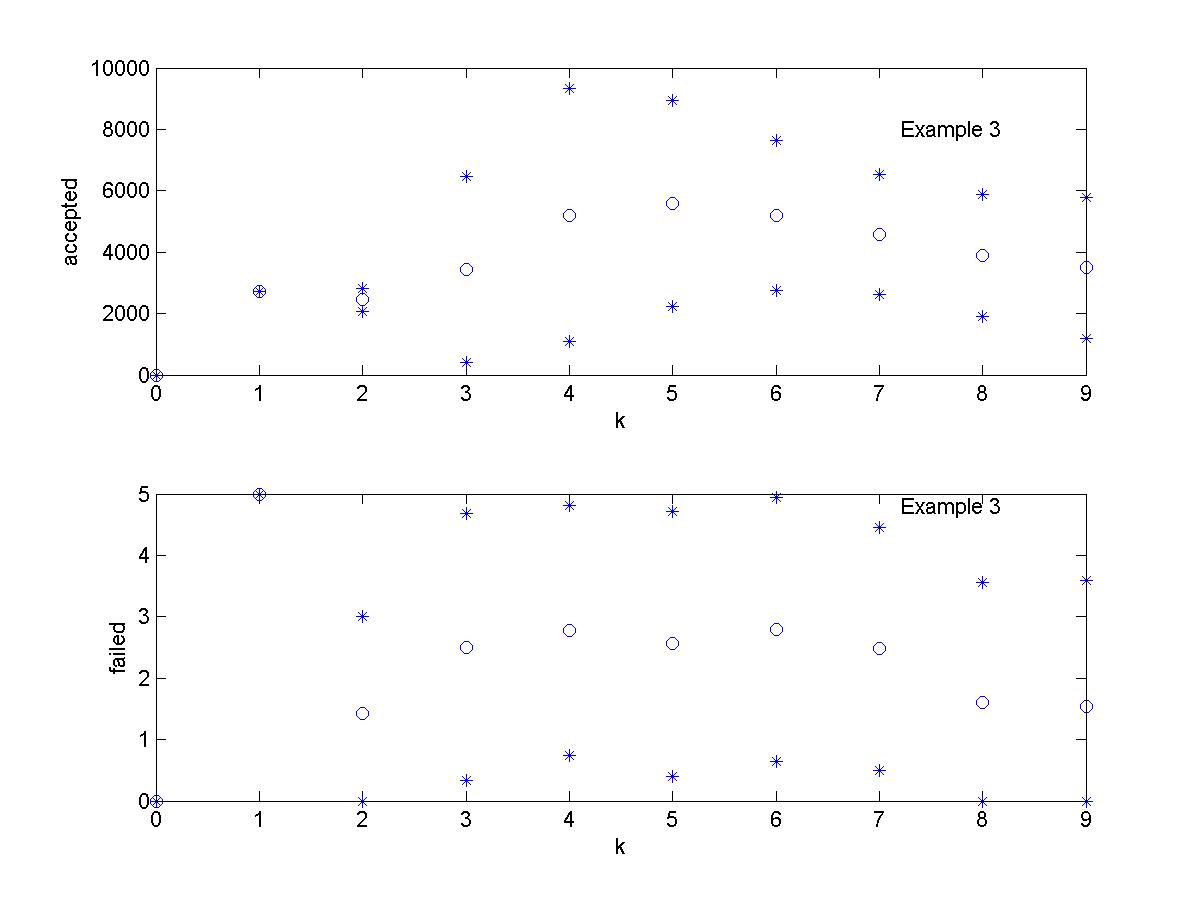}
\includegraphics[width=3.2in]{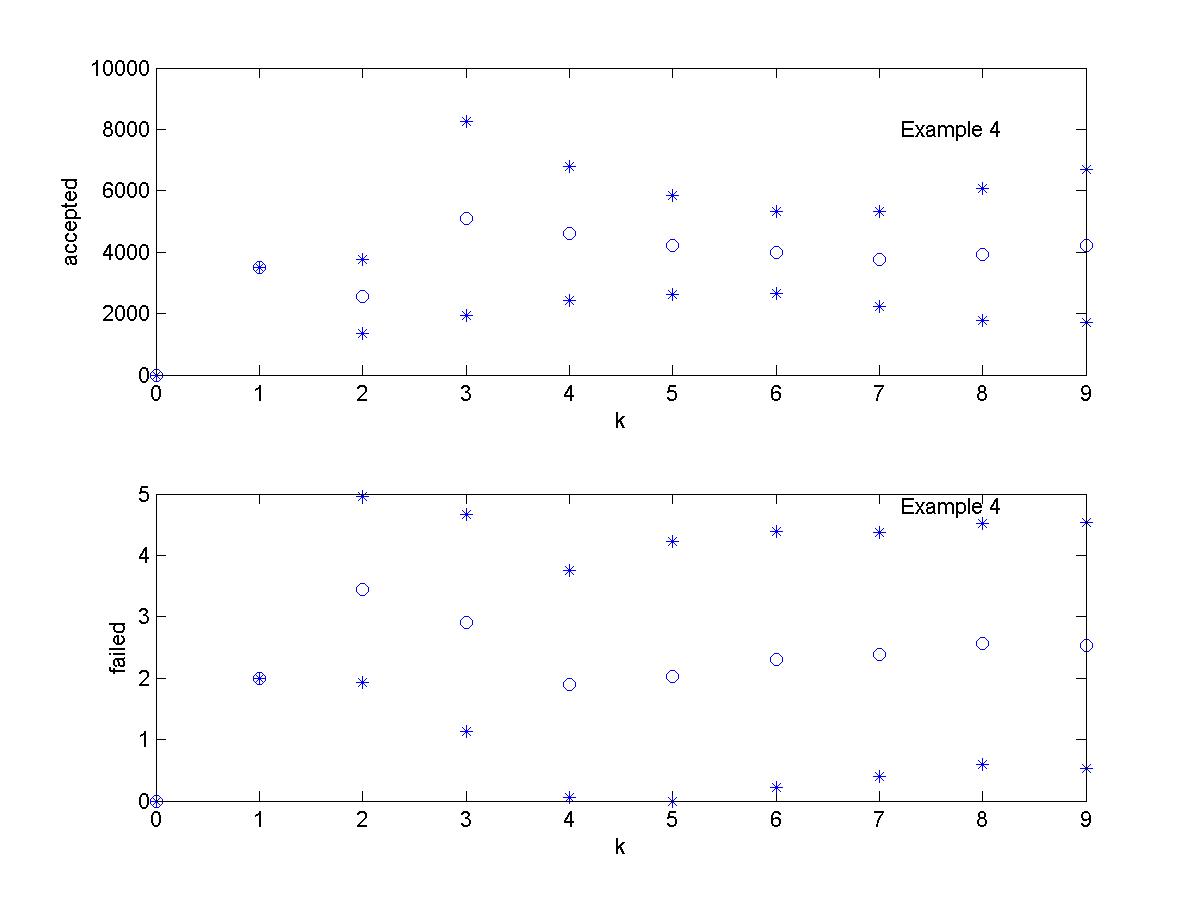}
\end{array}
$\newline\caption{Average (o) and 90\% confidence limits (*) of
accepted and failed steps of the adaptive LL filter
at each $t_{k}\in\{t\}_{M}$ in the four examples.}%
\label{Fig1}
\end{figure}

\subsection{Supplementary simulations}
As mentioned above, the approximate LMV filters play a central role
in the effective implementation of the innovation method for the
parameter estimation of diffusion processes given a set of partial
and noisy observations. Recently, in \cite{Jimenez AI}, the
performance of the innovation method based on different
approximations to the LMV filter has been evaluated by means of
simulations. In that paper, the parameters of the four state space
models considered in this section were estimated. The results show
that the estimators based on the order-$\beta$ LMV filters are
significantly more unbiased and efficient than the estimators based
on conventional approximations to the LMV filter, which clearly
illustrate the relevance of the approximate filters introduced here.
The reader interested in this type of identification problem is
encouraged to consider these simulations.

\section{Conclusions}

Approximate Linear Minimum Variance filters for continuous-discrete
state space models were introduced and their order of convergence is
stated. As particular instance, the order-$\beta$ Local
Linearization filters were studied in detail. For them, practical
algorithms were also provided and their performance in simulation
illustrated with various examples. Simulations show that: 1) with
thin time discretizations between observations, the order-$1$ LL
filter provides accurate approximations to the exact LMV filter; 2)
the convergence of the order-$1$ LL filter to the exact LMV filter
when the maximum stepsize of the time discretization between
observations decreases; 3) with respect to the conventional LL
filter, the order-$1$ LL filter significantly improves the
approximation to the exact LMV filter; 4) with an adequate
tolerance, the adaptive LL filter provides an automatic, accurate
and computationally efficient approximation to the LMV filtering
problem; and 5) the effectiveness of the order-$1$ LL filter for the
accurate identification of nonlinear stochastic systems from a
reduced number of partial and noisy observations distant in time.
Finally, it is worth noting that the approximate filters introduced
here have already been used in \cite{Jimenez AI} for the
implementation of computational efficient parameter estimators of
diffusion processes from partial and noisy observations, which would
have a positive impact in a variety of applications. Further, they
could be easily extended to deal with network-induced phenomena
(i.e., missing measurements and communication delays as considered
in \cite{Riera07,Hu12a,Hu12b}), which is currently a hot research
topic.

\bigskip

\section*{Acknowledgments}
This work was concluded on July 2012 within the framework of the
Associateship Scheme of the Abdus Salam International Centre for
Theoretical Physics (ICTP), Trieste, Italy. The author thanks to the
ICTP for the partial support to this work.

\end{document}